\def\NAT@def@citea{\def\@citea{\NAT@separator}}% Suppress spaces between citations using natbib.sty
\theoremstyle{plain}% Theorem-like structures provided by amsthm.sty
\newtheorem{theorem}{Theorem}[section]
\newtheorem{lemma}[theorem]{Lemma}
\newtheorem{corollary}[theorem]{Corollary}
\newtheorem{proposition}[theorem]{Proposition}
\theoremstyle{definition}
\newtheorem{definition}[theorem]{Definition}
\newtheorem{example}[theorem]{Example}
\theoremstyle{remark}
\newtheorem{remark}[theorem]{Remark}
\begin{document}

%\articletype{ARTICLE TEMPLATE}% Specify the article type or omit as appropriate

\title{$L^0$--convex compactness and its applications to random convex optimization and random variational inequalities}

\author{
\name{Tiexin Guo\textsuperscript{a}\thanks{CONTACT Email: tiexinguo@csu.edu.cn},Erxin Zhang\textsuperscript{a},Yachao Wang\textsuperscript{a} and Mingzhi Wu\textsuperscript{b}}
\affil{\textsuperscript{a}School of Mathematics and Statistics, Central South University, ChangSha 410083, China; \textsuperscript{b}School of Mathematics and Physics, China University of Geosciences, WuHan 430074, China.}
}

\maketitle

\begin{abstract}
First, this paper introduces the notion of $L^0$--convex compactness for a special class of closed convex subsets--closed $L^0$--convex subsets of a Hausdorff topological module over the topological algebra $L^0(\mathcal{F},K)$, where $L^0(\mathcal{F},K)$ is the algebra of equivalence classes of random variables from a probability space $(\Omega,\mathcal{F},P)$ to the scalar field $K$ of real numbers or complex numbers, endowed with the topology of convergence in probability. Then, this paper continues to develop the theory of $L^0$--convex compactness by establishing various kinds of characterization theorems  on $L^0$--convex compactness for $L^0$--convex subsets of a class of important topological modules--complete random normed modules, in particular, we make full use of the theory of random conjugate spaces to establish the characterization theorem of James type on $L^0$--convex compactness for a closed $L^0$--convex subset of a complete random normed module, which also surprisingly implies that our notion of $L^0$--convex compactness coincides with Gordan \v{Z}itkovi\'{c}'s notion of convex compactness in the context of a closed $L^0$--convex subset of a complete random normed module. As the first application of our results, we give a fundamental theorem on random convex optimization (or, $L^0$--convex optimization), which includes Hansen and Richard's famous result as a special case. As the second application, we give an existence theorem of solutions of random variational inequalities, which generalizes H.Brezis' classical result from a reflexive Banach space to a random reflexive complete random normed module. It should be emphasized that a new method, namely the $L^0$--convex compactness method, is presented for the second application since the usual weak compactness method is no longer applicable in the present case. Besides, our fundamental theorem on random convex optimization can be also applied in the study of optimization problems of conditional convex risk measures, which will be given in our future papers.

\end{abstract}

\begin{keywords}
Complete random normed module; closed $L^0$--convex subsets; $L^0$--convex compactness; random convex optimization; random variational inequalities
\end{keywords}

\section{Introduction}

It is well known that weak compactness is enough to ensure the existence of solutions of convex optimization problem, just as the following fundamental theorem on convex optimization shows:
\begin{theorem}\cite{ET99}\label{Theorem 1.1}
Let $C$ be a closed convex subset of a Hausdorff locally convex space $(E,\mathcal{T})$ and $f : C \to (-\infty,+\infty]$ a proper lower semicontinuous convex function. Then the following hold:
\begin{enumerate}[(1)]
\item If $C$ is weakly compact, then there exists $x_0 \in C$ such that $f(x_0) = \inf_{x \in C} f(x)$; in particular if $E$ is a reflexive Banach space and $f$ is coercive (namely, $f(x_n) \to +\infty$ whenever $\{ x_n : n \in N \}$ is a sequence in $C$ such that $\|x_n\| \to +\infty$), then such an $x_0$ always exists.
\item If $f$ is strictly convex, then there exists at most one $x_0$ in $C$ such that $f(x_0) = \inf_{x \in C} f(x)$.

\end{enumerate}
\end{theorem}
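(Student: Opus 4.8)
The plan is to prove part (1) by the classical \emph{weak lower semicontinuity} argument combined with a finite-intersection-property argument on sublevel sets, and to dispose of part (2) by a direct strict-convexity contradiction. First I would note that since $f$ is proper, $m := \inf_{x \in C} f(x) < +\infty$. For each real $\alpha > m$, introduce the sublevel set $C_\alpha := \{x \in C : f(x) \le \alpha\}$. By convexity of $f$ together with convexity of $C$, each $C_\alpha$ is convex, and by lower semicontinuity of $f$ together with closedness of $C$, each $C_\alpha$ is closed in the original topology $\mathcal{T}$. The decisive step---and the one I expect to be the main obstacle---is the passage from $\mathcal{T}$-closedness to weak closedness: a closed convex subset of a Hausdorff locally convex space is weakly closed, which is a consequence of the Hahn--Banach separation theorem (Mazur's theorem). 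Hence each $C_\alpha$ is weakly closed.

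Since the sets $C_\alpha$ are nested and nonempty for every $\alpha > m$, the family $\{C_\alpha : \alpha > m\}$ has the finite intersection property. Weak compactness of $C$ then forces $\bigcap_{\alpha > m} C_\alpha \ne \emptyset$; any point $x_0$ in this intersection satisfies $f(x_0) \le \alpha$ for every $\alpha > m$, so $f(x_0) \le m$, and therefore $f(x_0) = m$, which establishes existence of a minimizer.

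For the reflexive Banach space statement, I would reduce to the weakly compact case using coercivity: choose $R > 0$ so large that $\inf\{f(x) : x \in C,\ \|x\| \le R\} = \inf_{x \in C} f(x)$, which is possible because coercivity prevents any minimizing sequence from escaping to infinity in norm. The set $C \cap \{x : \|x\| \le R\}$ is then closed, convex and bounded, hence weakly compact in a reflexive Banach space by the Banach--Alaoglu theorem together with reflexivity, so the already proved part of (1) applies to it and yields the minimizer.

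For part (2), suppose for contradiction that $x_0 \ne x_1$ are two distinct minimizers, so that $f(x_0) = f(x_1) = m$ with $m$ finite. By convexity of $C$, the midpoint $\frac{1}{2}(x_0 + x_1)$ lies in $C$, and strict convexity of $f$ gives $f\bigl(\frac{1}{2}(x_0 + x_1)\bigr) < \frac{1}{2} f(x_0) + \frac{1}{2} f(x_1) = m$, contradicting the definition of $m$ as the infimum over $C$. Hence the minimizer is unique whenever it exists, completing the proof.
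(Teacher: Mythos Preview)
Your proof is correct and is the standard argument for this classical result. Note, however, that the paper does not actually prove Theorem~\ref{Theorem 1.1}: it is stated in the Introduction with a citation to \cite{ET99} and serves only as background motivation for the paper's random generalizations (Theorems~\ref{Theorem 3.6} and~\ref{Theorem 3.8}). So there is no ``paper's own proof'' to compare against; your argument is precisely the one found in Ekeland--T\'{e}mam and other standard references.

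One minor point worth tightening in your reflexive-Banach-space paragraph: when you choose $R$ via coercivity, you should make explicit that you pick some $x^* \in C$ with $f(x^*) < +\infty$ (which exists since $f$ is proper) and then take $R \ge \|x^*\|$ large enough that $f(x) > f(x^*)$ whenever $x \in C$ and $\|x\| > R$; this ensures both that $C \cap \{\|x\| \le R\}$ is nonempty and that the infimum over this truncation agrees with the infimum over all of $C$. Otherwise the argument is complete.
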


Similarly, weak compactness also played a crucial role in the existence of solutions of variational inequalities in a reflexive Banach space, see \cite{Bre68,ET99,Yuan98,Yuan99}.

Finance and economics force people to consider the convex optimization problem on a not locally convex space. For example, let $L^0(\mathcal{F})$ be the linear topological space of equivalence classes of real--valued random variables on a probability space $(\Omega, \mathcal{F}, P)$, endowed with the usual topology of convergence in probability space, it is typically a not locally convex space, whose dual is trivial when $\mathcal{F}$ is atomless. Clearly, it makes no sense to speak of weak compactness for a closed convex subset of a not locally convex space like $L^0(\mathcal{F})$. To overcome the difficulty, Gordan \v{Z}itkovi\'{c} introduced the following elegant notion of convex compactness for a convex subset of a Hausdorff linear topological space in \cite{Zit10}:
\begin{definition}\cite{Zit10}\label{Definition 1.2}
A nonempty convex subset $C$ of a Hausdorff linear topological space $E$ is said to be convexly compact (or to have convex compactness) if each family of closed convex subsets of $C$ has a nonempty intersection whenever the family has the finite intersection property.
\end{definition}

The notion of convex compactness was employed in \cite{Zit10} to give many successful applications to both nonlinear analysis and mathematical economics. For example, Gordan \v{Z}itkovi\'{c} \cite{Zit10} proved that if $C$ is a closed convexly compact subset of a Hausdorff linear topological space $E$ and $f : C \to (-\infty,+\infty]$ a proper lower semicontinuous convex function, then $f$ attains its minimum over $C$, which, combining the classical James theorem \cite{J64}, shows that a closed convex subset of a complete Hausdorff locally convex space is convexly compact if and only if it is weakly compact. Thus the notion of  Gordan \v{Z}itkovi\'{c}'s convex compactness is a proper substitute for the notion of ``weak compactness" for a closed convex subset of a not locally convex space, in fact, Gordan \v{Z}itkovi\'{c}'s result stated above essentially generalizes Theorem \ref{Theorem 1.1} to a large class of not locally convex spaces. In particular, it was proved in \cite{Zit10} that a convex subset $C$ of $L^0_+(\mathcal{F}) := \{ x \in L^0(\mathcal{F}) : x \geq 0 \}$ is convexly compact if and only if $C$ is bounded in the sense of linear topology on $L^0(\mathcal{F})$, see \cite{Zit10} for more rich other results and applications.

Recently, random functional analysis and its applications to conditional risk measures naturally lead us to study the problem of random convex optimization and random variational inequalities. To let the reader have a good understanding on this problem, we first give a brief introduction of the closely related theoretical and financial backgrounds. Random functional analysis is functional analysis based on random metric spaces, random normed modules, random inner product modules and random locally convex modules, which are a random generalization of ordinary metric spaces, normed spaces, inner product spaces and locally convex spaces, respectively. The history of random functional analysis will unavoidably dates back to the theory of probabilistic metric spaces, which was initiated by K.Menger in 1942 and subsequently founded by B.Schweizer and A.Sklar, see \cite{SS8305} for details. The theory of probabilistic metric spaces is centered at the study of probabilistic metric spaces and probabilistic normed spaces, whose main idea is to use probability distribution functions to describe the probabilistic metric between two points or the probabilistic norm of a vector. Following the tradition from probability theory, random metric spaces and random normed spaces were presented in the course of the development of the theory of probabilistic metric spaces, where the random metric between two points or the random norm of a vector is described by a nonnegative random variable, see \cite[Chapters 9 and 15]{SS8305}. But random normed spaces had not obtained a substantial development up to 1989 since they are often endowed with the $(\varepsilon,\lambda)$--topology, which are not locally convex in general and thus the traditional theory of conjugate spaces universally fails. The first substantial advance came in \cite{Guo89}, where Guo introduced the notion of an almost surely bounded random linear functional on random normed spaces and proved the corresponding Hahn--Banach theorem, which leads to the study of random conjugate spaces. Further, Guo introduced the notions of random normed modules (a special class of random normed spaces) and random inner product modules in \cite{YZG91,Guo92,Guo93} (here, we also mention the work \cite{HLR91} of Haydon, et.al, who independently introduced the notion of random normed modules over the real number field in the name of randomly normed $L^0$--modules, as a tool for the study of ultrapowers of Lebesgue--Bochner function spaces), which leads to a series of deep developments of random conjugate spaces \cite{Guo96a,GY96,Guo96b,GL05,Guo08} (here, we also mention the famous work \cite{HR87} of Hansen and Richard, who independently proved the Riesz representation theorem of random conjugate spaces for a class of special complete random inner product modules--conditional Hilbert spaces, and gave its applications in representing the equilibrium price). As a random generalization of a locally convex space, random locally convex modules were introduced by Guo in \cite{Guo01} and deeply developed in \cite{GC09,GP01,GXC09}. It should be pointed out that random functional analysis was developed under the $(\varepsilon,\lambda)$--topology before 2009.

It is well known that a locally convex space can be defined in two equivalent ways--one by a family of seminorms and the other by a base of convex neighborhoods. Guo \cite{Guo01} gave a random generalization of the first kind by means of a family of $L^0$--seminorms. In 2009, Motivated by financial applications Filipovi\'{c}, Kupper and Vogelpoth \cite{FKV09} gave a random generalization of the second kind by means of a family of $L^0$--convex neighborhoods, which leads to the notion of a locally $L^0$--convex module as well as another kind of topology for a random locally convex module, called the locally $L^0$--convex topology. The central purpose of \cite{FKV09} is an attempt to establish random convex analysis, providing an analytical basis for conditional convex risk measures, but such an attempt is realized by Guo, et.al. in \cite{GZW17,GZZ15a,GZZ15b}. Following \cite{FKV09}, Guo \cite{Guo10} introduced the notion of the countable concatenation property (also called $\sigma$--stability or stability) for a subset of an $L^0(\mathcal{F},K)$--module in order to establish the inherent connections between the two theories derived from the two kinds of topologies (namely the $(\varepsilon,\lambda)$--topology and the locally $L^0$--convex topology) for a random locally convex space. Based on \cite{Guo10}, a basic random convex analysis was established in \cite{GZW17,GZZ15a,GZZ15b} with applications to conditional convex risk measures \cite{GZZ14}. Guo's work \cite{Guo10} also stimulated a series of subsequent researches \cite{CKV15,CVMM17,DKKS13,FM14a,FM14b,GS11,GY12,Wu12,Wu13,WG15,Zap17}, in particular, the notion of the countable concatenation property was frequently employed in \cite{CKV15,DKKS13,GS11} so that a great number of basic results in real analysis and linear algebra can be generalized from Euclidean spaces to random Euclidean spaces.

Hansen and Richard's famous paper \cite{HR87} first studied the optimization problem of conditional variance given conditional mean, which is typically an optimization problem of an $L^0$--convex function defined on a closed $L^0$--convex subset of a complete random inner product module, see Remark \ref{Remark 3.9} for details. With the advent of random convex analysis and its applications to conditional convex risk measures, it is urgent to establish some general principles for random convex optimization and random variational inequalities. Therefore, it is very necessary for us to first investigate compactness on closed $L^0$--convex subsets of random locally convex modules since $L^0$--convex functions defined on closed $L^0$--convex subsets are our objective functions and have played active roles in random convex analysis. Compactness is an important topic in analysis, Guo \cite{Guo08} earlier found that the closed $L^0$--convex subsets frequently occurring in the theory of random normed modules are rarely compact under the $(\varepsilon,\lambda)$--topology, so are under the stronger locally $L^0$--convex topology, namely the conventional theory of compactness does not meet our needs. This forces us to further and deeply investigate Gordan \v{Z}itkovi\'{c}'s idea of convex compactness developed in \cite{Zit10} since a random locally convex module endowed with the $(\varepsilon,\lambda)$--topology is not locally convex in general. We first introduce the notion of $L^0$--convex compactness for a special class of closed convex subsets--closed $L^0$--convex subsets of a Hausdorff topological module over the topological algebra $L^0(\mathcal{F},K)$, then we continue to develop the theory of $L^0$--convex compactness with a series of characterization theorems, in particular we make use of the theory of random conjugate spaces to establish a characterization theorem of James type for a closed $L^0$--convex subset of a complete random normed module to have $L^0$--convex compactness, from which we can derive a surprising fact that the two notions of convex compactness and $L^0$--convex compactness coincide for closed $L^0$--convex subsets of a complete random normed module, Wu and Zhao \cite{WZ19} recently have extended the equivalence to the context of a complete random locally convex module, it should be also pointed out that the equivalence does not reduce the value of the notion of $L^0$--convex compactness, to the contrary the notion leads to a lot of new determination theorems for a closed $L^0$--convex subset to be convexly compact, see Theorem \ref{Theorem 2.21} and Corollary \ref{Corollary 2.23} for details. Further, based on $L^0$--convex compactness, some basic results on random convex optimization can be obtained, see Theorems \ref{Theorem 3.6} and \ref{Theorem 3.8} and Remark \ref{Remark 3.9}. Finally, the basic theorems on classical variational inequalities of Minty type and Brezis type (see \cite{ET99}) are also successfully generalized to the corresponding random settings. Here, we would like to emphasize our new methods and skills in the random settings: since the weak sequence compactness method frequently employed in classical cases \cite{ET99} is no longer valid, we are forced to discover the $L^0$--convex compactness method, for example, we think of Lemma \ref{Lemma 4.4} for the proof of Theorem \ref{Theorem 4.1}; Besides, a random locally convex module can be endowed with the two kinds of topologies, the work of this paper often needs to simultaneously consider them in order to arrive at our aim, which is another space different from the case of classical locally convex spaces. By the way, we also naturally consider applications of the theory of $L^0$--convex compactness to the optimization problem of conditional convex risk measures, we will specially study it in \cite{GZWW19} since conditional convex risk measures are not strictly $L^0$--convex and coercive. The theory of $L^0$--convex compactness can be also used to establish the fixed point theorem for nonexpansive mappings in complete random normed modules \cite{GZWG19}.

The remainder of this paper is organized as follows: Section \ref{Section 2} first recapitulates some known basic notions and facts and then introduces the concept of $L^0$--convex compactness and develop its theory with a series of characterization theorems. Section \ref{Section 3} first proves that a proper, stable and $\mathcal{T}_c$--lower semicontinuous $L^0$--quasiconvex function on an $L^0$--convexly compact set can attain its minimum, and we further establish a Minty type characterization for a minimum point of a G\^{a}teaux--differentiable $L^0$--convex function by variational inequalities. Finally, Section \ref{Section 4} establishes an existence criterion for the solutions of variational inequalities of ``elliptic" type for an $L^0$--convex function defined on a random reflexive random normed module.

Throughout this paper, unless otherwise stated, $(\Omega,\mathcal{F},P)$ always denotes a given probability space, $K$ the scalar field $R$ of real numbers or $C$ of complex numbers, $L^0(\mathcal{F},K)$ the algebra of equivalence classes of $K$--valued $\mathcal{F}$--measurable random variables defined on $(\Omega,\mathcal{F},P)$, in particular we simply write $L^0(\mathcal{F})$ for $L^0(\mathcal{F},R)$ when no confusion occurs. Besides, $\bar{L}^0(\mathcal{F})$ (namely, $\bar{L}^0(\mathcal{F},R)$) stands for the set of equivalence classes of extended real--valued random variables on $(\Omega,\mathcal{F},P)$. Here, equivalence is understood as usual, namely two random variables are equivalent if they equal $P$--almost surely. Proposition \ref{Proposition 1.3} below can be regarded as a random version of the classical supremum principle. The partial order $\leq$ on $\bar{L}^0(\mathcal{F})$ is defined by $\xi \leq \eta$ iff $\xi^0(\omega) \leq \eta^0(\omega)$ for $P$--almost surely all $\omega \in \Omega$, where $\xi^0$ and $\eta^0$ are arbitrarily chosen representatives of $\xi$ and $\eta$ respectively.
\begin{proposition}\label{Proposition 1.3} \cite{DS58}.
 $(\bar{L}^0(\mathcal{F}), \leq)$ is a complete lattice, for any nonempty subset $H$ of $\bar{L}^0(\mathcal{F})$, $\bigvee H$ and $\bigwedge H$ denote the supremum and infimum of $H$, respectively, and the following statements hold:
\begin{enumerate}[(1)]
\item There exists two sequences $\{ a_n, n \in N \}$ and $\{ b_n, n \in N \}$ in $H$ such that $\bigvee_{n \geq 1} a_n = \bigvee H$ and $\bigwedge_{n \geq 1}b_n = \bigwedge H$.
\item If $H$ is directed upwards $($downwards$)$, namely there exists $h_3 \in H$ for any $h_1$ and $h_2 \in H$ such that $h_3 \geq h_1 \bigvee h_2$ $($resp., $h_3 \leq h_1 \bigwedge h_2)$, then $\{ a_n, n \in N \}$ $($resp., $\{ b_n, n \in N \})$ can be chosen as nondecreasing $($resp., nonincreasing$)$.
\item As a sublattice of $\bar{L}^0(\mathcal{F})$, $L^0(\mathcal{F})$ is conditionally complete, namely any nonempty subset with an upper $($resp., a lower$)$ bound has a supremum $($resp., an infimum$)$.
\end{enumerate}
\end{proposition}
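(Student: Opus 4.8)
The core of the proposition is the existence of the essential supremum of an arbitrary family in $\bar{L}^0(\mathcal{F})$, together with its attainability along a countable subfamily; parts (2) and (3) then follow by short refinements. I would first reduce to the case of uniformly bounded families. Fix a strictly increasing order isomorphism $\phi : [-\infty, +\infty] \to [-1, 1]$, for instance $\phi(x) = x/(1 + |x|)$ with $\phi(\pm\infty) = \pm 1$; since $\phi$ is increasing and continuous it commutes with pointwise (countable) suprema and infima and induces an order isomorphism of $\bar{L}^0(\mathcal{F})$ onto the $[-1,1]$--valued classes. Hence it suffices to prove the statement for a nonempty $H$ whose elements are represented by functions with values in $[-1,1]$, where the expectation $E[\,\cdot\,]$ with respect to $P$ of every element, and of every countable supremum of elements, is a finite real number.

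For such $H$, let $\mathcal{C}$ denote the collection of all countable subfamilies of $H$; for $G \in \mathcal{C}$ the pointwise supremum $\bigvee_{g \in G} g$ is measurable and hence defines an element of $\bar{L}^0(\mathcal{F})$. Put $s = \sup\{ E[\bigvee_{g \in G} g] : G \in \mathcal{C} \}$, which is finite. Choose $G_n \in \mathcal{C}$ with $E[\bigvee_{g \in G_n} g] \to s$, set $G^* = \bigcup_{n} G_n \in \mathcal{C}$, say $G^* = \{ a_n : n \in N \}$, and let $a = \bigvee_{n} a_n$. Since $a \geq \bigvee_{g \in G_n} g$ for every $n$ we get $E[a] = s$. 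The key step is to verify that $a$ is an upper bound for $H$: for any $h \in H$, the family $G^* \cup \{ h \} \in \mathcal{C}$ gives $a \vee h \geq a$ and $E[a \vee h] \leq s = E[a]$, whence $a \vee h = a$, i.e. $h \leq a$. Finally $a$ is the least upper bound, since any upper bound of $H$ dominates every $a_n$ and hence dominates $a$. Therefore $\bigvee H = a = \bigvee_{n} a_n$ with $a_n \in H$, proving the supremum part of (1); the infimum part follows by applying this to $\{ -g : g \in H \}$. Transporting back through $\phi^{-1}$ gives the result for the original $H$, and the same construction applied to an arbitrary family shows every nonempty subset has a supremum and an infimum; together with the top and bottom elements (the classes of $+\infty$ and $-\infty$) this makes $(\bar{L}^0(\mathcal{F}), \leq)$ a complete lattice.

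For (2), suppose $H$ is directed upwards and let $\{ a_n : n \in N \} \subseteq H$ satisfy $\bigvee_n a_n = \bigvee H$ as in (1). Put $c_1 = a_1$ and, using directedness, inductively choose $c_{n+1} \in H$ with $c_{n+1} \geq c_n \vee a_{n+1}$; then $\{ c_n : n \in N \}$ is nondecreasing, lies in $H$, and satisfies $\bigvee_n c_n \geq \bigvee_n a_n = \bigvee H \geq \bigvee_n c_n$, so $\bigvee_n c_n = \bigvee H$. The downward case is symmetric. For (3), let $H \subseteq L^0(\mathcal{F})$ admit an upper bound $u \in L^0(\mathcal{F})$. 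Its supremum $\bigvee H$ in $\bar{L}^0(\mathcal{F})$ exists by the above and satisfies $\bigvee H \leq u < +\infty$; being also bounded below by any fixed element of $H$, it is finite--valued, hence lies in $L^0(\mathcal{F})$ and is plainly the least upper bound there. The infimum case is symmetric.

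The main obstacle is the upper--bound verification in the second paragraph: it is exactly here that the passage to a fixed maximizing countable family is needed, and the argument relies on the strict monotonicity of the expectation on bounded classes, namely that $a \vee h \geq a$ together with $E[a \vee h] \leq E[a]$ forces $a \vee h = a$ almost surely. The reduction to bounded functions via the order isomorphism $\phi$ is what makes the expectation available and finite; without it one must argue with truncations, which only complicates the bookkeeping.
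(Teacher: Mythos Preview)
Your argument is correct and is essentially the classical proof of the existence of the essential supremum; the paper, however, does not prove Proposition~1.3 at all but simply cites it from Dunford--Schwartz \cite{DS58} as a known result. So there is no ``paper's own proof'' to compare against: you have supplied a self-contained proof where the authors only give a reference.

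That said, a couple of minor remarks on presentation. First, when you transport back through $\phi^{-1}$, you should note explicitly that $\phi^{-1}$, being continuous and increasing on $[-1,1]$, commutes with countable pointwise suprema, so that the identity $\bigvee H = \bigvee_n a_n$ survives the transport; you say $\phi$ has this property but use it for $\phi^{-1}$. Second, in (3) your sentence ``being also bounded below by any fixed element of $H$'' is doing real work (it ensures $\bigvee H > -\infty$ a.s.), so you might emphasize that $H$ is nonempty and pick such an element explicitly. Neither point affects the validity of the proof.
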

In the field of probability theory or mathematical finance, Proposition \ref{Proposition 1.3} often occurs in a different (but equivalent) version: let $\bar{\mathcal{L}}^0(\mathcal{F})$ be the set of extended real--valued random variables on $(\Omega, \mathcal{F},P)$, an essential order $\leq$ on $\bar{\mathcal{L}}^0(\mathcal{F})$ is defined by $\xi \leq \eta$ iff $\xi(\omega) \leq \eta(\omega)$ for $P$--almost surely all $\omega \in \Omega$, then any nonempty subset $H$ of $\bar{\mathcal{L}}^0(\mathcal{F})$ has an essential supremum and an essential infimum, denoted by esssup $H$ and essinf $H$, respectively, it is clear that esssup $H$ and essinf $H$ are unique in the sense of $P$--almost surely equality. Further, for any nonempty subfamily $\mathcal{A}$ of $\mathcal{F}$, esssup $\mathcal{A}$ denotes such an $\mathcal{F}$--measurable set $G$ that $I_G = esssup \{ I_A : A \in \mathcal{A} \}$, called an essential supremum of $\mathcal{A}$, similarly, one can understand essinf $\mathcal{A}$. Here, $I_A$ denotes the characteristic function of $A$, namely $I_A(\omega) = 1$ for $\omega \in A$ and 0 otherwise.

As usual, throughout this paper we denote by $\tilde{I}_A$ the equivalence class of $I_A$ for any $A \in \mathcal{F}$. For two elements $\xi$ and $\eta$ of $\bar{L}^0(\mathcal{F})$, $\xi > \eta$ means that $\xi \geq \eta$ but $\xi \neq \eta$. $L^0_+(\mathcal{F})$ stands for the set $\{ \xi \in L^0(\mathcal{F}) ~|~ \xi \geq 0 \}$ and $\bar{L}^0_+(\mathcal{F}) = \{ \xi \in \bar{L}^0(\mathcal{F})~|~\xi \geq 0 \}$. Finally, for $\xi$ and $\eta$ in $\bar{L}^0(\mathcal{F})$ and $A \in \mathcal{F}$, $\xi > \eta$ on $A$ means that $\xi^0(\omega) > \eta^0(\omega)$ for $P$--almost surely all $\omega \in A$ for arbitrarily chosen representatives $\xi^0$ and $\eta^0$ of $\xi$ and $\eta$, respectively, similarly, one can understand $\xi \geq \eta$ on $A$.

\section{$L^0$--convex compactness and its characterization} \label{Section 2}
The main results of this section are Proposition \ref{Proposition 2.13}, Corollary \ref{Corollary 2.14}, Theorem \ref{Theorem 2.16}, Theorem \ref{Theorem 2.17}, Theorem \ref{Theorem 2.21} and Corollary \ref{Corollary 2.23} below, let us first give some preliminaries before the main results are stated and proved.
\begin{definition}\label{Definition 2.1} \cite{Guo92,Guo93,Guo10}
An ordered pair $(E,\| \cdot \|)$ is called a random normed module $($briefly, an $RN$--module$)$ over the scalar field $K$ with base $(\Omega,\mathcal{F},P)$ if $E$ is a left module over the algebra $L^0(\mathcal{F},K)$ $($briefly, an $L^0(\mathcal{F},K)$--module$)$ and $\| \cdot \|$ is a mapping from $E$ to $L^0_+(\mathcal{F})$ such that the following axioms are satisfied:
\begin{enumerate}[(RNM-1)]
\item $\| \xi x \| = |\xi| \|x\|$ for any $\xi \in L^0(\mathcal{F}, K)$ and any $x \in E$;
\item $\|x+y\| \leq \|x\| + \|y\|$ for all $x$ and $y \in E$;
\item $\|x\| = 0$ implies $x = \theta$ $($the null in $E)$.
\end{enumerate}
In addition, $\| \cdot \|$ is called the $L^0$--norm on $E$ and $\|x\|$ the $L^0$--norm of $x$ for any $x \in E$. A mapping $\| \cdot \| : E \to L^0_+(\mathcal{F})$ is called an $L^0$--seminorm on $E$ if it only satisfies $(RNM-1)$ and $(RNM-2)$ as above.
\end{definition}
\begin{remark}
Similarly, one can understand the notions of a random inner product module $($briefly, an $RIP$--module$)$ and a random locally convex module $($briefly, an $RLC$--module$)$. Especially, an ordered pair $(E,\mathcal{P})$ is called an $RLC$--module over the scalar field $K$ with base $(\Omega,\mathcal{F},P)$ if $E$ is an $L^0(\mathcal{F},K)$--module and $\mathcal{P}$ is a family of $L^0$--seminorms on $E$ such that $\bigvee\{ \|x\| : \| \cdot \| \in \mathcal{P} \} = 0 $ implies $x = \theta$ $($namely $\mathcal{P}$ is separated$)$.
\end{remark}

The most simplest example of $RN$ modules is $L^0(\mathcal{F},K)$ with the $L^0$--norm $\| \cdot \| := |\cdot|$ (namely, the absolute value mapping). When $K$ is replaced by an arbitrary Banach space $B$, one can have a more general $RN$--module $L^0(\mathcal{F},B)$, which was deeply studied in connection with the Lebesgue--Bochner function spaces at the early stage of $RN$ modules \cite{Guo92,YG94,HLR91,Guo96a}.

Example \ref{Example 2.3} below is of fundamental importance for financial applications.
\par
Let $(\Omega,\mathcal{E},P)$ be a probability space and $\mathcal{F}$ a $\sigma$--subalgebra of $\mathcal{E}$, let us first recall from \cite{L78} the notion of a generalized conditional mathematical expectation operator $E[\cdot~|~\mathcal{F}] : \bar{L}^0_+(\mathcal{E}) \to \bar{L}^0_+(\mathcal{F})$ defined by $E[\xi~|~\mathcal{F}] = \lim_{n \to \infty} E[\xi \wedge n~|~\mathcal{F}]$. $\xi \in L^0(\mathcal{E})$ is said to be condotionally integrable with respect to $\mathcal{F}$ if $E[|\xi|~|~\mathcal{F}] < + \infty$ a.s., at this time $E[\xi~|~\mathcal{F}] := E[\xi^+~|~\mathcal{F}] - E[\xi^-~|~\mathcal{F}]$ is called the conditional expectation of $\xi$. Further, let $1 \leq p \leq +\infty$, $|||\xi|||_p := E[|\xi|^p~|~\mathcal{F}]^{1/p}$ for $p < +\infty$ and $|||\xi|||_{\infty} := \bigwedge \{ \eta \in \bar{L}^0_+(\mathcal{F})~|~|\xi| \leq \eta \}$, then it is well known that $|||\xi|||_p < +\infty$ a.s. if and only if $\xi$ can be written as $\xi = \xi_1 \cdot \xi_2$ for some $\xi_1 \in L^0(\mathcal{F})$ and $\xi_2 \in L^p(\mathcal{E})$, where $L^p(\mathcal{E})$ stands for the usual Banach space of $p$--integrable $(p < +\infty)$ or essentially bounded $(p=+\infty)$ functions on $(\Omega,\mathcal{E},P)$.
\begin{example} \label{Example 2.3} \cite{HR87,FKV09}
Let $(\Omega,\mathcal{E},P)$ be a probability space and $\mathcal{F}$ a $\sigma$--subalgebra of $\mathcal{E}$. For any fixed extended positive real number $p \in [1,+\infty]$, let $L^p_{\mathcal{F}}(\mathcal{E}) = L^0(\mathcal{F}) \cdot L^p(\mathcal{E})
:=\{ \xi x ~:~ \xi \in L^0(\mathcal{F})$ and  $x \in L^p(\mathcal{E}) \}$, $|||\cdot |||_p: L^p_{\mathcal{F}}(\mathcal{E}) \to L^0_{+}(\mathcal{F})$ is defined as follows:
$$|||y|||_p=\left\{
            \begin{array}{ll}
              E[|y|^p ~|~ \mathcal{F}]^{\frac{1}{p}}, & \hbox{when $1\leq p<+\infty$;} \\

              \bigwedge \{\eta \in \L^0_+(\Omega,\mathcal{F},P)~|~|y|\leq \eta\}, &  \hbox{when $p=+\infty$.}
            \end{array}
          \right.$$
Then $(L^p_{\mathcal{F}}(\mathcal{E}),||| \cdot |||_p)$ is an $RN$--module over $R$ with base $(\Omega,\mathcal{F},P)$, specially $L^2_{\mathcal{F}}(\mathcal{E})$ is an $RIP$--module. Historically, $L^2_{\mathcal{F}}(\mathcal{E})$ first occurred in \cite{HR87} and general $L^p_{\mathcal{F}}(\mathcal{E})$ was given in \cite{FKV09}.
\end{example}

The $(\varepsilon,\lambda)$--topology for $L^0(\mathcal{F},K)$ was introduced by B.Schweizer and A.Sklar in \cite{SS8305}: for any given positive numbers $\varepsilon$ and $\lambda$ such that $\lambda < 1$, let $U_{\theta}(\varepsilon,\lambda) = \{ \xi \in L^0(\mathcal{F},K)~|~P\{ \omega \in \Omega ~|~ |\xi(\omega)| < \varepsilon \} > 1-\lambda \}$. Then $\{ U_{\theta}(\varepsilon,\lambda)~|~ \varepsilon >0,~ 0 < \lambda <1 \}$ forms a local base at $\theta$ of some metrizable linear topology for $L^0(\mathcal{F},K)$, which is called the $(\varepsilon,\lambda)$--topology for $L^0(\mathcal{F},K)$ and is exactly the topology of convergence in probability. It is obvious that $L^0(\mathcal{F},K)$ is also a topological algebra over $K$ under the $(\varepsilon,\lambda)$--topology. In fact, B.Schweizer and A.Sklar also introduced the $(\varepsilon,\lambda)$--topology for random normed spaces which are more general than random normed modules, see \cite{SS8305}.

To introduce the $(\varepsilon,\lambda)$--topology for a random locally convex module, let $(E,\mathcal{P})$ be a random locally convex module with base $(\Omega,\mathcal{F},P)$, for any finite nonempty subfamily $Q$ of $\mathcal{P}$, $\| \cdot \|_Q ~:~ E \to L^0_+(\mathcal{F})$ defined by $\|x\|_Q = \bigvee\{ \|x\| : \| \cdot \| \in Q \}$ for any $x \in E$ is still an $L^0$--seminorm on $E$. Furthermore, let $U_{\theta}(Q,\varepsilon,\lambda) = \{ x \in E ~|~ P\{ \omega \in \Omega ~|~ \|x\|_Q(\omega) < \varepsilon \} > 1-\lambda \}$ for any finite nonempty subfamily $Q$ of $\mathcal{P}$, $\varepsilon >0$ and $0 < \lambda <1$. Then we have the following:
\begin{proposition}\label{Proposition 2.4} \cite{Guo01,GP01}
Let $(E,\mathcal{P})$ be a random locally convex module over $K$ with base $(\Omega,\mathcal{F},P)$. Then $\{ U_{\theta}(Q,\varepsilon,\lambda) ~|~ Q $ is a finite nonempty subfamily of $\mathcal{P}$, $\varepsilon >0, 0 < \lambda <1 \}$ forms a local base at $\theta$ of some Hausdorff linear topology for $E$, called the $(\varepsilon,\lambda)$--topology. Furthermore, $E$ is a topological module over the topological algebra $L^0(\mathcal{F},K)$ when $E$ and $L^0(\mathcal{F},K)$ are endowed with their respective $(\varepsilon,\lambda)$--topology.
\end{proposition}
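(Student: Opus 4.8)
The plan is to first verify that the family $\mathcal{B} := \{U_{\theta}(Q,\varepsilon,\lambda)\}$ satisfies the classical axioms guaranteeing that a family of subsets of a $K$--vector space is the neighborhood base at the origin of a unique $K$--linear topology, and then to upgrade this to joint continuity of the $L^0(\mathcal{F},K)$--scalar multiplication. Throughout I would repeatedly use two elementary facts about the aggregated $L^0$--seminorm: the homogeneity $\|\xi x\|_Q = |\xi|\,\|x\|_Q$ (from (RNM-1)) and the subadditivity $\|x+y\|_Q \le \|x\|_Q + \|y\|_Q$ (from (RNM-2)), together with the observation that each $\|x\|_Q$ lies in $L^0_+(\mathcal{F})$ and is therefore finite almost surely.

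For the linear topology, the axioms to check are: (i) every member contains $\theta$, which is immediate from $\|\theta\|_Q = 0$; (ii) the filter--base property, for which, given $U_{\theta}(Q_1,\varepsilon_1,\lambda_1)$ and $U_{\theta}(Q_2,\varepsilon_2,\lambda_2)$, I would take $Q_3 = Q_1 \cup Q_2$, $\varepsilon_3 = \varepsilon_1 \wedge \varepsilon_2$ and $\lambda_3 = \lambda_1 \wedge \lambda_2$, using the monotonicity $\|\cdot\|_{Q_i} \le \|\cdot\|_{Q_3}$ (since $Q_i \subseteq Q_3$) to get $U_{\theta}(Q_3,\varepsilon_3,\lambda_3) \subseteq U_{\theta}(Q_1,\varepsilon_1,\lambda_1) \cap U_{\theta}(Q_2,\varepsilon_2,\lambda_2)$; (iii) the addition condition $V+V \subseteq U$, obtained by taking $V = U_{\theta}(Q,\varepsilon/2,\lambda/2)$ and splitting the event $\{\|v_1+v_2\|_Q \ge \varepsilon\}$ via subadditivity into $\{\|v_1\|_Q \ge \varepsilon/2\} \cup \{\|v_2\|_Q \ge \varepsilon/2\}$; (iv) that each $U_{\theta}(Q,\varepsilon,\lambda)$ is balanced, since $\|cx\|_Q = |c|\,\|x\|_Q \le \|x\|_Q$ for $c \in K$ with $|c| \le 1$; and (v) that each member is absorbing, which holds because $tx \in U_{\theta}(Q,\varepsilon,\lambda)$ amounts to $P\{\|x\|_Q \ge \varepsilon/|t|\} < \lambda$, and the left side tends to $P\{\|x\|_Q = +\infty\} = 0$ as $|t| \to 0$ by the almost sure finiteness of $\|x\|_Q$. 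By the standard neighborhood--base theorem for topological vector spaces, these five properties produce a unique $K$--linear topology with $\mathcal{B}$ as a local base at $\theta$. Hausdorffness then follows from separatedness of $\mathcal{P}$: if $x$ lies in every member of $\mathcal{B}$, then taking $Q = \{\|\cdot\|\}$ for a single $\|\cdot\| \in \mathcal{P}$ and letting $\varepsilon,\lambda \downarrow 0$ forces $P\{\|x\| \ge \varepsilon\} = 0$ for every $\varepsilon > 0$, i.e. $\|x\| = 0$ almost surely; as this holds for each $\|\cdot\| \in \mathcal{P}$ we obtain $\bigvee\{\|x\| : \|\cdot\| \in \mathcal{P}\} = 0$, whence $x = \theta$.

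The genuinely delicate part, and the one I expect to be the main obstacle, is the joint continuity of the module multiplication $L^0(\mathcal{F},K) \times E \to E$, because $L^0(\mathcal{F},K)$ is not a normed field and its elements are unbounded random scalars, so the classical bound--based argument of ordinary topological vector spaces is unavailable. At a point $(r_0,m_0)$ I would use the decomposition $rm - r_0 m_0 = (r-r_0)(m-m_0) + (r-r_0)m_0 + r_0(m-m_0)$ and estimate the $\|\cdot\|_Q$--size of each summand separately, arranging each to be less than $\varepsilon/3$ outside an exceptional event whose probability is part of a total allowance summing to less than $\lambda$. The cross term $(r-r_0)(m-m_0)$ is handled exactly as in the absorbing and addition steps above. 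For the two remaining terms the key device is truncation against a small--probability set: since $\|m_0\|_Q < +\infty$ almost surely, I choose $M$ with $P\{\|m_0\|_Q > M\}$ small, so that off this set $\|(r-r_0)m_0\|_Q = |r-r_0|\,\|m_0\|_Q \le M\,|r-r_0|$, which is controlled by taking $r - r_0$ in a small $(\varepsilon,\lambda)$--neighborhood $U_{\theta}(\varepsilon/M,\,\cdot\,)$ of $L^0(\mathcal{F},K)$; symmetrically, since $|r_0| < +\infty$ almost surely, I choose $M'$ with $P\{|r_0| > M'\}$ small and bound $\|r_0(m-m_0)\|_Q = |r_0|\,\|m-m_0\|_Q$ by requiring $m - m_0 \in U_{\theta}(Q,\varepsilon/M',\,\cdot\,)$. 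A union bound over the finitely many exceptional events then yields $rm \in r_0 m_0 + U_{\theta}(Q,\varepsilon,\lambda)$ on a set of probability greater than $1-\lambda$, which is precisely the required continuity; specializing $r_0$ or $m_0$ to $\theta$ simultaneously re-derives that $L^0(\mathcal{F},K)$ acts by continuous operators, completing the verification that $E$ is a topological module over the topological algebra $L^0(\mathcal{F},K)$.
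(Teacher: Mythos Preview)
The paper does not supply its own proof of Proposition~2.4; it is stated as a known result, with the citation \cite{Guo01,GP01} attached, and no argument is given in the body of the paper. So there is nothing to compare your proposal against within this manuscript.

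That said, your outline is the standard and correct verification. The five neighborhood--base axioms are checked accurately, and the Hausdorff step correctly reduces to the separatedness hypothesis on $\mathcal{P}$. Your treatment of the joint continuity of the module action is also sound: the decomposition $rm-r_0m_0=(r-r_0)(m-m_0)+(r-r_0)m_0+r_0(m-m_0)$ together with the truncation device (choosing $M$ with $P\{\|m_0\|_Q>M\}$ small, and symmetrically $M'$ for $|r_0|$) is exactly how one compensates for the lack of a uniform bound on random scalars. One small point worth making explicit in a write--up is the cross term: you should spell out that if $|r-r_0|<\varepsilon'$ and $\|m-m_0\|_Q<\varepsilon''$ both hold on an event of probability exceeding $1-\lambda'$, then on that event $\|(r-r_0)(m-m_0)\|_Q\le\varepsilon'\varepsilon''$, so choosing $\varepsilon'\varepsilon''<\varepsilon/3$ suffices; the phrase ``handled exactly as in the absorbing and addition steps'' is a little vague since neither of those steps involves a product of two small quantities. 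With that clarification, the argument is complete and matches what one finds in the cited sources.
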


In the sequel, the $(\varepsilon,\lambda)$--topology for any random locally convex module is always denoted by $\mathcal{T}_{\varepsilon,\lambda}$. For any random locally convex module $(E,\mathcal{P})$ over $K$ with base $(\Omega,\mathcal{F},P)$, the $L^0(\mathcal{F},K)$--module of continuous module homomorphisms from $(E,\mathcal{T}_{\varepsilon,\lambda})$ to $(L^0(\mathcal{F},K),\mathcal{T}_{\varepsilon,\lambda})$ is called the random conjugate space of $(E,\mathcal{P})$ with respect to the $(\varepsilon,\lambda)$--topology, denoted by $(E,\mathcal{P})^{\ast}_{\varepsilon,\lambda}$ or briefly by $E^{\ast}_{\varepsilon,\lambda}$.
\begin{definition}\label{Definition 2.5} \cite{Guo01,GXC09}
Let $E$ be a left module over the algebra $L^0(\mathcal{F},K)$ $($briefly,an $L^0(\mathcal{F},K)$--module$)$ and $D$ a subset of $E$. $D$ is $L^0$--convex if $\xi x + (1-\xi)y \in D$ for all $x$ and $y \in D$ and $\xi \in L^0_+(\mathcal{F})$ such that $0 \leq \xi \leq 1$. A subset $H$ of $E$ is $L^0$--absorbed by $D$ if there exists $\eta \in L^0_{++}(\mathcal{F})$ such that $\xi H := \{ \xi h ~|~ h \in H \} \subset D$ for all $\xi \in L^0(\mathcal{F},K)$ with $|\xi| \leq \eta$; if every point in $E$ is $L^0$--absorbed by $D$, then $D$ is $L^0$--absorbent. $D$ is $L^0$--balanced if $\xi D \subset D$ for all $\xi \in L^0(\mathcal{F},K)$ such that $|\xi| \leq 1$.
\end{definition}

Let $V_{\theta}(\varepsilon) = \{ \xi \in L^0(\mathcal{F},K) ~|~ |\xi| \leq \varepsilon \}$ for any $\varepsilon \in L^0_{++}(\mathcal{F})$.
In 2009, Filipovi\'{c}, et.al \cite{FKV09} first introduced another kind of topology for $L^0(\mathcal{F},K)$, called the locally $L^0$--convex topology, denoted by $\mathcal{T}_c$, a subset $G$ of $L^0(\mathcal{F},K)$ is open under this topology if there is some $V_{\theta}(\varepsilon)$ for any fixed element $g \in G$ such that $g + V_{\theta}(\varepsilon) \subset G$. It is easy to verify that $(L^0(\mathcal{F},K),\mathcal{T}_c)$ is a topological ring.
In 2009, on the basis of this, Filipovi\'{c},et.al \cite{FKV09} introduced the notion of a locally $L^0$--convex module as follows: an ordered pair $(E,\mathcal{T})$ is a locally $L^0$--convex module if $E$ is an $L^0(\mathcal{F},K)$--module and $\mathcal{T}$ is a topology on $E$ such that $(E,\mathcal{T})$ is a topological module over the topological ring $L^0(\mathcal{F},K)$ and $\mathcal{T}$ has a local base at $\theta$ (the null element of $E$) whose every member is $L^0$--convex, $L^0$--absorbent and $L^0$--balanced, at which time, $\mathcal{T}$ is a locally $L^0$--convex topology on $E$. This leads directly to the following:
\begin{proposition}\label{Proposition 2.6} \cite{FKV09}
Let $(E,\mathcal{P})$ be a random locally convex module over $K$ with base $(\Omega,\mathcal{F},P)$. Then $\{ U_{\theta}(Q,\varepsilon) ~|~ Q$ is a finite nonempty subfamily of $\mathcal{P}$ and $\varepsilon \in L^0_{++}(\mathcal{F}) \}$ forms a local base at $\theta$ of some Hausdorff locally $L^0$--convex topology for $E$, called the locally $L^0$--convex topology induced by $\mathcal{P}$, where $U_{\theta}(Q,\varepsilon) = \{ x \in E ~|~ \|x\|_Q \leq \varepsilon \}$.
\end{proposition}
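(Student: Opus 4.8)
The plan is to verify that $\mathcal{B} := \{U_\theta(Q,\varepsilon) : Q \text{ a finite nonempty subfamily of } \mathcal{P},\ \varepsilon \in L^0_{++}(\mathcal{F})\}$ satisfies the standard axioms for a local base at $\theta$ of a translation-invariant topology making $(E,+)$ a topological abelian group, and then to upgrade this to a Hausdorff locally $L^0$--convex module topology. First I would record that, as already noted before Proposition \ref{Proposition 2.4}, each $\|\cdot\|_Q$ is an $L^0$--seminorm, so $\|\theta\|_Q = 0 \leq \varepsilon$ gives $\theta \in U_\theta(Q,\varepsilon)$. The family is downward directed: given $U_\theta(Q_1,\varepsilon_1)$ and $U_\theta(Q_2,\varepsilon_2)$, put $Q = Q_1 \cup Q_2$ and $\varepsilon = \varepsilon_1 \wedge \varepsilon_2$; since $Q_i \subseteq Q$ forces $\|x\|_{Q_i} \leq \|x\|_Q$, one has $U_\theta(Q,\varepsilon) \subseteq U_\theta(Q_1,\varepsilon_1) \cap U_\theta(Q_2,\varepsilon_2)$. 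The triangle inequality for $\|\cdot\|_Q$ yields $U_\theta(Q,\varepsilon/2) + U_\theta(Q,\varepsilon/2) \subseteq U_\theta(Q,\varepsilon)$, which furnishes the halving axiom; together with the symmetry $-U_\theta(Q,\varepsilon) = U_\theta(Q,\varepsilon)$ (immediate from $\|-x\|_Q = \|x\|_Q$), these give a well-defined translation-invariant topology under which addition is continuous.

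Next I would check that every member of $\mathcal{B}$ is $L^0$--convex, $L^0$--balanced and $L^0$--absorbent, which is exactly what Definition \ref{Definition 2.5} requires of a locally $L^0$--convex local base. $L^0$--convexity and $L^0$--balancedness follow directly from (RNM-1) and (RNM-2) applied to $\|\cdot\|_Q$: if $\|x\|_Q, \|y\|_Q \leq \varepsilon$ and $0 \leq \xi \leq 1$ then $\|\xi x + (1-\xi)y\|_Q \leq \xi\|x\|_Q + (1-\xi)\|y\|_Q \leq \varepsilon$, and if $|\eta| \leq 1$ then $\|\eta x\|_Q = |\eta|\,\|x\|_Q \leq \|x\|_Q$. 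For $L^0$--absorbency, given $x \in E$ I would set $\eta := \varepsilon/(1 + \|x\|_Q) \in L^0_{++}(\mathcal{F})$; then $|\xi| \leq \eta$ forces $\|\xi x\|_Q = |\xi|\,\|x\|_Q \leq \varepsilon\,\|x\|_Q/(1+\|x\|_Q) \leq \varepsilon$, so $\xi x \in U_\theta(Q,\varepsilon)$.

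The main obstacle is the continuity of scalar multiplication $L^0(\mathcal{F},K) \times E \to E$ at an arbitrary point $(\xi_0, x_0)$, where $L^0(\mathcal{F},K)$ carries $\mathcal{T}_c$ with local base $\{V_\theta(\delta) : \delta \in L^0_{++}(\mathcal{F})\}$; since the radii are $L^0$--valued, the classical constant-$\varepsilon$ bookkeeping must be replaced by adaptive, random choices. Starting from the identity $\xi x - \xi_0 x_0 = (\xi - \xi_0)(x - x_0) + \xi_0(x - x_0) + (\xi - \xi_0)x_0$, the seminorm axioms give $\|\xi x - \xi_0 x_0\|_Q \leq |\xi - \xi_0|\,\|x - x_0\|_Q + |\xi_0|\,\|x - x_0\|_Q + |\xi - \xi_0|\,\|x_0\|_Q$. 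To control each summand by $\varepsilon/3$, I would require $\xi \in \xi_0 + V_\theta(\delta)$ with $\delta := 1 \wedge \big(\varepsilon/(3(1+\|x_0\|_Q))\big)$ and $x \in x_0 + U_\theta(Q,\varepsilon')$ with $\varepsilon' := \varepsilon/(3(1+|\xi_0|))$, both of which lie in $L^0_{++}(\mathcal{F})$ because $\|x_0\|_Q, |\xi_0| \in L^0_+(\mathcal{F})$. Then the third term is at most $\delta\|x_0\|_Q \leq \varepsilon/3$, the second is at most $|\xi_0|\varepsilon' \leq \varepsilon/3$, and, using $\delta \leq 1$, the first is at most $\delta\varepsilon' \leq \varepsilon' \leq \varepsilon/3$, so $\xi x \in \xi_0 x_0 + U_\theta(Q,\varepsilon)$. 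Finally, Hausdorffness follows from the separatedness of $\mathcal{P}$: if $x$ lies in every $U_\theta(Q,\varepsilon)$, then taking $Q = \{\|\cdot\|\}$ and $\varepsilon = 1/n$ for each $\|\cdot\| \in \mathcal{P}$ gives $\|x\| \leq 1/n$ for all $n$, hence $\|x\| = 0$; therefore $\bigvee\{\|x\| : \|\cdot\| \in \mathcal{P}\} = 0$, and so $x = \theta$. Assembling these facts shows $\mathcal{B}$ is a local base at $\theta$ for a Hausdorff locally $L^0$--convex topology on $E$, as claimed.
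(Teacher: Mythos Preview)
The paper does not supply its own proof of Proposition \ref{Proposition 2.6}; it is quoted from \cite{FKV09} as a known result. Your verification is correct and follows the standard template: you check the neighborhood-base axioms for a translation-invariant group topology, the $L^0$--convex/$L^0$--balanced/$L^0$--absorbent properties required by Definition \ref{Definition 2.5}, joint continuity of scalar multiplication over $(L^0(\mathcal{F},K),\mathcal{T}_c)$ via the three-term splitting with $L^0$--valued radii, and Hausdorffness from the separating hypothesis on $\mathcal{P}$. This is precisely how the argument in \cite{FKV09} proceeds, so there is nothing further to compare.
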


For the sake of convenience, from now on, the locally $L^0$--convex topology for an arbitrary random locally convex module, its locally $L^0$--convex topology is always denoted by $\mathcal{T}_c$. Furthermore, for any random locally convex module $(E,\mathcal{P})$ over $K$ with base $(\Omega,\mathcal{F},P)$, the $L^0(\mathcal{F},K)$--module of continuous module homomorphisms from $(E,\mathcal{T}_c)$ to $(L^0(\mathcal{F},K),\mathcal{T}_c)$ is called the random conjugate space of $(E,\mathcal{P})$ with respect to the locally $L^0$--convex topology, denoted by $(E,\mathcal{P})^{\ast}_{c}$ or briefly by $E^{\ast}_c$.

In \cite{FKV09}, a family $\mathcal{P}$ of $L^0$--seminorms on an $L^0(\mathcal{F},K)$--module $E$ is said to have the countable concatenation property if the $L^0$--seminorm $\sum^{\infty}_{n=1} \tilde{I}_{A_n} \|\cdot\|_{Q_n}$ still belongs to $\mathcal{P}$ for any sequence $\{ Q_n ~|~ n \in N \}$ of finite nonempty subfamilies of $\mathcal{P}$ and for any countable partition $\{ A_n ~|~ n \in N \}$ of $\Omega$ to $\mathcal{F}$.

Another crucial notion is the following:
\begin{definition}\label{Definition 2.7} \cite{Guo10}
Let $E$ be an $L^0(\mathcal{F},K)$--module and $G$ a subset of $E$. $G$ is said to have the countable concatenation property if there is $g \in G$ for any sequence $\{ g_n ~|~ n \in N \}$ in $G$ and for any countable partition $\{ A_n ~|~ n \in N \}$ of $\Omega$ to $\mathcal{F}$ such that $\tilde{I}_{A_n} g = \tilde{I}_{A_n} g_n$ for all $n \in N$. Furthermore, if $E$ has the countable concatenation property, we always write $H_{cc}(G)$ for the smallest set which contains $G$ and has the countable concatenation property, called the countable concatenation hull of $G$, where $G$ is a subset of $E$.
\end{definition}

In general, $g$ as in Definition \ref{Definition 2.7}, which satisfies $\tilde{I}_{A_n} g = \tilde{I}_{A_n} g_n, \forall n \in N$ for any given $\{ g_n \}$ and $\{ A_n \}$, is unique, for example, this is true for any random locally convex module, at which time we can write $g = \sum^{\infty}_{n=1}\tilde{I}_{A_n}g_n$. It is also easy to verify that the random conjugate space $E^{\ast}_{\varepsilon,\lambda}$ of a random locally convex module $(E,\mathcal{P})$ always has the countable concatenation property. Besides, it is well known that  $L^{p}_{\mathcal{F}}(\mathcal{E})$ has the countable concatenation property for each $p \in [1, +\infty]$, see \cite{GZZ14}.

Proposition \ref{Proposition 2.8} below throughly describes the relation between $E^{\ast}_{\varepsilon,\lambda}$ and $E^{\ast}_c$.
\begin{proposition}\label{Proposition 2.8} \cite{Guo10,GZZ15a}
Let $(E,\mathcal{P})$ be any random locally convex module. Then the following statements hold:
\begin{enumerate}[(1)]
\item $($see \cite{Guo10}$)$. $E^{\ast}_{\varepsilon,\lambda} = E^{\ast}_c$ if $\mathcal{P}$ has the countable concatenation property, specially $E^{\ast}_{\varepsilon,\lambda} = E^{\ast}_c$ for any random normed module $(E, \|\cdot\|)$.
\item $($see \cite{GZZ15a}$)$. $E^{\ast}_{\varepsilon,\lambda} = H_{cc}(E^{\ast}_c)$
\end{enumerate}
\end{proposition}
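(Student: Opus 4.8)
The plan is to reduce both assertions to two boundedness characterizations of module homomorphisms, together with the comparison of the two topologies. First I would record that on both $E$ and $L^0(\mathcal{F},K)$ the locally $L^0$--convex topology is finer than the $(\varepsilon,\lambda)$--topology: given a basic $\mathcal{T}_{\varepsilon,\lambda}$--neighbourhood $U_{\theta}(Q,\varepsilon,\lambda)$, any $\mathcal{T}_c$--neighbourhood $U_{\theta}(Q,\varepsilon')$ with a constant $\varepsilon'<\varepsilon$ is contained in it (if $\|x\|_Q\leq\varepsilon'$ a.s. then $P\{\|x\|_Q<\varepsilon\}=1$), so $\mathcal{T}_c\supseteq\mathcal{T}_{\varepsilon,\lambda}$. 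The two engines I would then establish or quote are: (A) $f\in E^{\ast}_c$ if and only if there are a finite nonempty $Q\subseteq\mathcal{P}$ and $\xi\in L^0_+(\mathcal{F})$ with $|f(x)|\leq\xi\|x\|_Q$ for all $x\in E$; and (B) $f\in E^{\ast}_{\varepsilon,\lambda}$ if and only if $f$ is a.s. bounded in the sense that there exist a countable partition $\{A_n : n\in N\}$ of $\Omega$ to $\mathcal{F}$, finite nonempty $Q_n\subseteq\mathcal{P}$ and $\xi_n\in L^0_+(\mathcal{F})$ with $\tilde{I}_{A_n}|f(x)|\leq\xi_n\|x\|_{Q_n}$ for all $x$ and all $n$.

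Granting (A) and (B), I would first prove the inclusion $E^{\ast}_c\subseteq E^{\ast}_{\varepsilon,\lambda}$, which needs no hypothesis on $\mathcal{P}$: starting from the global bound $|f(x)|\leq\xi\|x\|_Q$, an elementary $(\varepsilon,\lambda)$--estimate (split $\Omega$ according to $\{\xi\leq M\}$ for $M$ large, then shrink the seminorm neighbourhood) shows $f$ is $\mathcal{T}_{\varepsilon,\lambda}$--continuous. For Part (1), I assume $\mathcal{P}$ has the countable concatenation property and take $f\in E^{\ast}_{\varepsilon,\lambda}$; from (B) I form the single $L^0$--seminorm $p=\sum_{n}\tilde{I}_{A_n}\|\cdot\|_{Q_n}$, which lies in $\mathcal{P}$ exactly by that property, and set $\xi=\sum_n\tilde{I}_{A_n}\xi_n\in L^0_+(\mathcal{F})$. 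Since $\{A_n\}$ is a partition, the cross terms $\tilde{I}_{A_m}\tilde{I}_{A_n}$ vanish for $m\neq n$, and summing the local bounds gives $|f(x)|\leq\xi\,p(x)$; by (A) with $Q=\{p\}$ this yields $f\in E^{\ast}_c$, hence $E^{\ast}_{\varepsilon,\lambda}=E^{\ast}_c$. The special case of a random normed module is immediate, since its single $L^0$--norm trivially has the countable concatenation property.

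For Part (2), one inclusion is free: $E^{\ast}_c\subseteq E^{\ast}_{\varepsilon,\lambda}$ has just been shown, and $E^{\ast}_{\varepsilon,\lambda}$ already has the countable concatenation property (as recorded after Definition \ref{Definition 2.7}), so by minimality $H_{cc}(E^{\ast}_c)\subseteq E^{\ast}_{\varepsilon,\lambda}$. For the reverse inclusion I would take $f\in E^{\ast}_{\varepsilon,\lambda}$, invoke the partition $\{A_n\}$, $Q_n$, $\xi_n$ from (B), and set $f_n:=\tilde{I}_{A_n}f$. Each $f_n$ is again a module homomorphism, satisfies $|f_n(x)|=\tilde{I}_{A_n}|f(x)|\leq\xi_n\|x\|_{Q_n}$, hence $f_n\in E^{\ast}_c$ by (A), and $\tilde{I}_{A_n}f_n=\tilde{I}_{A_n}f$. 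Thus $f$ is the (unique) countable concatenation $\sum_n\tilde{I}_{A_n}f_n$ of members of $E^{\ast}_c$, so $f\in H_{cc}(E^{\ast}_c)$, giving $E^{\ast}_{\varepsilon,\lambda}=H_{cc}(E^{\ast}_c)$.

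The main obstacle is characterization (B), and within it the forward direction: extracting from mere $\mathcal{T}_{\varepsilon,\lambda}$--continuity the countable partition together with the local $L^0$--bounds. Because the $(\varepsilon,\lambda)$--topology is not locally $L^0$--convex, continuity yields only boundedness ``in probability'' on a single seminorm neighbourhood, not a global $L^0$--inequality; to upgrade this to (B) I expect to run an exhaustion/maximality argument over $\mathcal{F}$--measurable sets (choosing on each piece a seminorm and an $L^0$--bound that work there, and patching countably many pieces into a partition of $\Omega$), using the conditional completeness and essential--supremum principle of Proposition \ref{Proposition 1.3} to make the selection well defined. Characterization (A), by contrast, is the routine translation of $\mathcal{T}_c$--continuity of a module homomorphism into a homogeneous $L^0$--bound and can be quoted from the locally $L^0$--convex module theory; once both are in hand, the gluing steps above are purely algebraic.
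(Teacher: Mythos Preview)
The paper does not supply its own proof of Proposition~\ref{Proposition 2.8}: the result is quoted from \cite{Guo10} and \cite{GZZ15a}, and the text moves directly on to the consequence that $E^{\ast}_{\varepsilon,\lambda}=E^{\ast}_c$ for $RN$--modules. So there is no in--paper argument to compare against; one can only assess your outline against the arguments in those references.

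Your plan is the standard one from that literature and is correct in outline. The two characterizations you isolate are exactly the engines used there: (A) is the $\mathcal{T}_c$--continuity criterion from \cite{FKV09}, and (B) is the a.s.--boundedness characterization of $\mathcal{T}_{\varepsilon,\lambda}$--continuous module homomorphisms going back to \cite{Guo01,GP01,Guo10}. Once (A) and (B) are in hand, your gluing steps for (1) and (2) are precisely how \cite{Guo10} and \cite{GZZ15a} proceed: for (1) the countable concatenation hypothesis on $\mathcal{P}$ lets you amalgamate the pieces of (B) into a single $L^0$--seminorm in $\mathcal{P}$; for (2) you instead amalgamate the localized functionals $\tilde{I}_{A_n}f\in E^{\ast}_c$ inside $H_{cc}(E^{\ast}_c)$. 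Your identification of the forward direction of (B) as the substantive step, and the exhaustion argument over $\mathcal{F}$ you sketch for it, also match the original proofs. One small point worth tightening: the easy inclusion $E^{\ast}_c\subseteq E^{\ast}_{\varepsilon,\lambda}$ does \emph{not} follow merely from $\mathcal{T}_c\supseteq\mathcal{T}_{\varepsilon,\lambda}$ on $E$ (that inequality goes the wrong way for continuity of maps \emph{into} $L^0$); you correctly bypass this by using the explicit bound $|f(x)|\leq\xi\|x\|_Q$ together with a direct $(\varepsilon,\lambda)$--estimate, but the opening remark about comparing topologies is a red herring for that step.
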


As (1) of Proposition \ref{Proposition 2.8} shows that $E^{\ast}_{\varepsilon,\lambda} = E^{\ast}_c$ for any random normed module $(E, \|\cdot\|)$ over $K$ with base $(\Omega, \mathcal{F},P)$, so we can simply write $E^{\ast}$ for $E^{\ast}_{\varepsilon,\lambda}$ or $E^{\ast}_c$. At the early stage of random conjugate spaces, it was shown in \cite{Guo92,Guo96b} that a linear operator $f$ from $E$ to $L^0(\mathcal{F},K)$ belongs to $E^{\ast}$ iff $f$ is almost surely bounded, (namely, there exists some $\xi \in L^0_+(\mathcal{F})$ such that $|f(x)| \leq \xi \|x\|$ for all $x \in E$), and an $L^0$--norm $\|\cdot\|^{\ast} : E^{\ast} \to L^0_+(\mathcal{F})$ can be introduced by $\|f\|^{\ast} = \bigwedge \{ \xi \in L^0_+(\mathcal{F}) ~|~ |f(x)| \leq \xi \|x\|$ for all $x \in E \}$ so that $(E^{\ast},\|\cdot\|^{\ast})$ is a random normed module over $K$ with base $(\Omega,\mathcal{F},P)$, likewise, $(E^{\ast \ast},\|\cdot\|^{\ast \ast})$ can be defined, called the random biconjugate space of $E$. In the sequel, we still briefly write $\|\cdot\|$ for $\|\cdot\|^{\ast}$ or $\|\cdot\|^{\ast \ast}$, which will not cause any confusion.

Just as in classical functional analysis, the canonical embedding mapping $J ~:~ (E,\|\cdot\|) \to (E^{\ast \ast}, \|\cdot\|)$ defined by $J(x)(f) = f(x)$ for all $f \in E^{\ast}$ and all $x \in E$, is $L^0$--norm--preserving. Naturally, if $J$ is surjective, then $E$ is said to be random reflexive. Since $E^{\ast}$ is always $\mathcal{T}_{\varepsilon,\lambda}$--complete for any random normed module $(E,\|\cdot\|)$, of course, $E^{\ast \ast}$ is also $\mathcal{T}_{\varepsilon,\lambda}$--complete, so that any random reflexive random normed module is always $\mathcal{T}_{\varepsilon,\lambda}$--complete. Besides, all $L^{p}_{\mathcal{F}}(\mathcal{E})$ are random reflexive for $p \in (1, + \infty)$, see \cite{Guo10}.

Now, we can return to the theme of this section by beginning with the following:
\begin{definition}\label{Definition 2.9}
Let $(E,\mathcal{T})$ be a topological module over the topological algebra $(L^0(\mathcal{F},K),\mathcal{T}_{\varepsilon,\lambda})$ and $G$ a closed $L^0$--convex subset of $E$. $G$ is $L^0$--convexly compact $($or, is said to have $L^0$--convex compactness$)$ if any family of closed $L^0$--convex subsets of $G$ has a nonempty intersection whenever this family has the finite intersection property.
\end{definition}

\begin{remark} \label{Remark 2.10}
When $\mathcal{F}$ reduces to a trivial $\sigma$--algebra, namely $\mathcal{F} = \{ \Omega, \emptyset \}$, $(E,\mathcal{T})$ reduces to an ordinary topological linear space and $G$ to an ordinary convex set, then the concept of $L^0$--convex compactness in Definition \ref{Definition 2.9} reduces to that of convex compactness, which was introduced by G.\v{Z}itkovi\'{c} in \cite{Zit10}, so the $L^0$--convex compactness is a generalization of convex compactness. On the other hand, when $\mathcal{F}$ is a generic $\sigma$--algebra, since a closed $L^0$--convex subset is also a closed convex subset, then it is natural that we should compare the notions of convex compactness and $L^0$--convex compactness. The concept of $L^0$--convex compactness only impose certain restriction on the family of ``closed $L^0$--convex subsets" of $G$, rather than on the larger family of ``closed convex subsets" of $G$, and thus the concept of $L^0$--convex compactness always seems weaker than that of convex compactness for a closed $L^0$--convex subset, but our Theorem \ref{Theorem 2.21} and Remark \ref{Remark 2.22} below show that the two notions coincide for a class of important closed convex subsets--closed $L^0$--convex subsets of a complete $RN$ module, even Wu and Zhao \cite{WZ19} recently have extended the equivalence to the context of a complete random locally convex module, we still would like to retain the notion of $L^0$--convex compactness in order to make this paper self--contained, it is more important that the study of $L^0$--convex compactness can lead to many new determination theorems for a closed $L^0$--convex subset to be convexly compact, see Theorem \ref{Theorem 2.21} and Corollary \ref{Corollary 2.23} below.
\end{remark}

For a subset $G$ of an $L^0(\mathcal{F},K)$--module $E$, $conv_{L^0}(G)$ ($conv(G)$) always denotes the smallest $L^0$--convex (correspondingly, convex) subset containing $G$, called the $L^0$--convex (correspondingly, convex) hull of $G$. Clearly,  $conv_{L^0}(G) \supset conv(G)$.

Similarly to Definition 2.3 of \cite{Zit10}, we have the following:
\begin{definition}\label{Definition 2.11}
Let $\{ x_{\alpha}, \alpha \in \Gamma \}$ be a net in an $L^0(\mathcal{F},K)$--module $E$ and $Fin(\Gamma)$ denote the family of finite subsets of $\Gamma$. A net $\{ y_{\beta}, \beta \in B \}$ is said to be a subnet of $L^0$--convex combinations of $\{ x_{\alpha}, \alpha \in \Gamma \}$ if there exists a mapping $D ~:~ B \to Fin(\Gamma)$ such that the following two items hold:
\begin{enumerate}[(1)]
\item $y_{\beta} \in conv_{L^0} \{ x_{\alpha}, \alpha \in D(\beta) \}$ for each $\beta \in B$;
\item for each $\alpha \in \Gamma$ there exists $\beta \in B$ such that $\alpha^{\prime} \geq \alpha$ for each $\alpha^{\prime} \in \bigcup_{\beta^{\prime} \geq \beta}D(\beta^{\prime})$
\end{enumerate}
\end{definition}

Proposition \ref{Proposition 2.12} below is an $L^0(\mathcal{F})$--version of Proposition 2.4 of \cite{Zit10}, but its proof is omitted since the proof is a word--by--word copy of that of Proposition 2.4 of \cite{Zit10}.
\begin{proposition}\label{Proposition 2.12}
A closed $L^0$--convex subset $G$ of a topological module $(E,\mathcal{T})$ over the topological algebra $(L^0(\mathcal{F},K),\mathcal{T}_{\varepsilon,\lambda})$ is $L^0$--convexly compact iff for any net $\{ x_{\alpha}, \alpha \in \Gamma \}$ in $G$ there exists a subnet $\{ y_{\beta}, \beta \in B \}$ of $L^0$--convex combinations of $\{ x_{\alpha}, \alpha \in \Gamma \}$ such that $\{ y_{\beta}, \beta \in B \}$ converges to some $y \in G$.
\end{proposition}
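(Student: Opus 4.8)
The plan is to prove both implications by transcribing the classical convex-compactness argument of \cite{Zit10}, replacing ordinary convex combinations by $L^0$-convex combinations (coefficients $\xi \in L^0_+(\mathcal{F})$ with $0 \leq \xi \leq 1$) and using the topological-module structure to guarantee that the relevant closure operations preserve $L^0$-convexity. Throughout, for a net $\{x_\alpha, \alpha \in \Gamma\}$ in $G$ and $\alpha \in \Gamma$, I write $T_\alpha := \{x_{\alpha'} : \alpha' \geq \alpha\}$ for the tail and $C_\alpha := \overline{conv_{L^0}(T_\alpha)}$ for the closure of its $L^0$-convex hull. Since $E$ is a topological module over $(L^0(\mathcal{F},K),\mathcal{T}_{\varepsilon,\lambda})$, the closure of an $L^0$-convex set is again $L^0$-convex, and since $G$ is closed and $L^0$-convex with $T_\alpha \subset G$, each $C_\alpha$ is a closed $L^0$-convex subset of $G$.

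For the necessity, assume $G$ is $L^0$-convexly compact and fix a net $\{x_\alpha, \alpha \in \Gamma\}$ in $G$. The family $\{C_\alpha : \alpha \in \Gamma\}$ has the finite intersection property: given $\alpha_1, \dots, \alpha_n$, directedness of $\Gamma$ yields $\alpha_* \geq \alpha_i$ for all $i$, whence $x_{\alpha_*} \in \bigcap_{i=1}^n C_{\alpha_i}$. By $L^0$-convex compactness there exists $y \in \bigcap_{\alpha \in \Gamma} C_\alpha \subset G$. To manufacture the required subnet, let $\mathcal{U}(y)$ denote the neighborhood filter at $y$ directed by reverse inclusion and set $B := \Gamma \times \mathcal{U}(y)$ with the product order. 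For each $\beta = (\alpha, U) \in B$, the membership $y \in C_\alpha = \overline{conv_{L^0}(T_\alpha)}$ lets me pick $y_\beta \in conv_{L^0}(T_\alpha) \cap U$; writing $y_\beta$ as an $L^0$-convex combination of finitely many elements of $T_\alpha$, let $D(\beta) \in Fin(\Gamma)$ be the corresponding index set, so $D(\beta) \subset \{\alpha' : \alpha' \geq \alpha\}$. Then clause (1) of Definition \ref{Definition 2.11} holds by construction; clause (2) holds because, given $\alpha_0 \in \Gamma$, the choice $\beta = (\alpha_0, U)$ ensures that every $\beta' = (\alpha', U') \geq \beta$ has $D(\beta') \subset \{\gamma : \gamma \geq \alpha'\} \subset \{\gamma : \gamma \geq \alpha_0\}$; and $y_\beta \to y$ since $y_\beta \in U$ for $\beta = (\alpha, U)$, so the net is eventually inside any prescribed neighborhood of $y$.

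For the sufficiency, let $\{F_i : i \in I\}$ be a family of closed $L^0$-convex subsets of $G$ with the finite intersection property; I must show $\bigcap_{i \in I} F_i \neq \emptyset$. Order $\Gamma := Fin(I)$ by inclusion and, using the finite intersection property, choose for each $\gamma \in \Gamma$ a point $x_\gamma \in \bigcap_{i \in \gamma} F_i$. By hypothesis the net $\{x_\gamma, \gamma \in \Gamma\}$ admits a subnet $\{y_\beta, \beta \in B\}$ of $L^0$-convex combinations, with associated $D : B \to Fin(\Gamma)$, converging to some $y \in G$. Fix $i \in I$. Applying clause (2) of Definition \ref{Definition 2.11} to $\{i\} \in \Gamma$ produces $\beta_0 \in B$ such that every $\gamma \in D(\beta')$ with $\beta' \geq \beta_0$ satisfies $\gamma \supseteq \{i\}$, i.e. $i \in \gamma$, whence $x_\gamma \in F_i$; since $F_i$ is $L^0$-convex, $y_{\beta'} \in conv_{L^0}\{x_\gamma : \gamma \in D(\beta')\} \subset F_i$ for all $\beta' \geq \beta_0$. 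As $F_i$ is closed and the subnet converges to $y$, it follows that $y \in F_i$. Since $i$ was arbitrary, $y \in \bigcap_{i \in I} F_i$, proving the claim.

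The main obstacle is not any single estimate but the bookkeeping in the necessity direction: assembling the index set $B = \Gamma \times \mathcal{U}(y)$ and the selector $D$ so that the resulting net genuinely satisfies both clauses of Definition \ref{Definition 2.11} while converging to $y$. The only place where the module (rather than merely linear) structure is essential is the preliminary observation that $\overline{conv_{L^0}(T_\alpha)}$ is $L^0$-convex, which relies on the continuity of the $L^0(\mathcal{F},K)$-scalar multiplication furnished by Proposition \ref{Proposition 2.4}; apart from this, every step is the $L^0$-convex transcription of the proof of Proposition 2.4 of \cite{Zit10}.
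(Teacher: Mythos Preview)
Your proof is correct and follows exactly the route the paper intends: the paper itself omits the proof, noting that it is ``a word--by--word copy of that of Proposition 2.4 of \cite{Zit10}'', and your argument is precisely that transcription, with $L^0$--convex hulls replacing ordinary convex hulls and the finite--intersection bookkeeping carried out via $\Gamma \times \mathcal{U}(y)$ and $Fin(I)$ in the two directions respectively. One small remark: in your closing paragraph the citation of Proposition~\ref{Proposition 2.4} is slightly off, since Proposition~\ref{Proposition 2.12} is stated for an arbitrary Hausdorff topological module over $(L^0(\mathcal{F},K),\mathcal{T}_{\varepsilon,\lambda})$, not only for random locally convex modules; the continuity of the module multiplication (and hence the $L^0$--convexity of closures) is already part of the definition of a topological module, so no auxiliary proposition is needed there.
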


As was stated in \cite{Zit10}, Proposition \ref{Proposition 2.12} is a characterization in terms of generalized sequences, we would like to give some variants of Proposition \ref{Proposition 2.12}, which provide much convenience for the purpose of this paper. In particular, these variants give the more precise relation between $B$ and $\Gamma$ in Proposition \ref{Proposition 2.12}, as a consequence, Corollary \ref{Corollary 2.14} and Theorem \ref{Theorem 2.16} below will play a crucial role in the sequel of this paper.
\begin{proposition}\label{Proposition 2.13}
Let $(E,\mathcal{T})$ be a topological module over the topological algebra $(L^0(\mathcal{F},K),\mathcal{T}_{\varepsilon,\lambda})$ and $\mathcal{U}$ a local base of the topology $\mathcal{T}$. Then a closed $L^0$--convex subset $G$ of $E$ is $L^0$--convexly compact iff for any net $\{ x_{\alpha}, \alpha \in \Gamma \}$ in $G$ there exists a subnet $\{ y_{(\alpha,U)}, (\alpha,U) \in \Gamma \times \mathcal{U} \}$ of $L^0$--convex combinations of $\{ x_{\alpha}, \alpha \in \Gamma \}$ such that $y_{(\alpha,U)}$ converges to some $y \in G$ and each $y_{(\alpha,U)} \in (y+U) \bigcap conv_{L^0}\{ x_{\alpha^{\prime}} ~|~ \alpha^{\prime} \geq \alpha \}$, where $\Gamma \times \mathcal{U}$ is directed by $(\alpha_{1},U_1) \leq (\alpha_{2},U_2)$ iff $\alpha_{1} \leq \alpha_{2}$ and $U_2 \subset U_1$.
\end{proposition}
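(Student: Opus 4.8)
The plan is to prove the two directions separately, obtaining the easy ``if'' part from Proposition \ref{Proposition 2.12} and treating the ``only if'' part by a direct argument from Definition \ref{Definition 2.9}. For the ``if'' part I would argue that the hypothesis supplies, for every net $\{x_\alpha, \alpha \in \Gamma\}$ in $G$, a subnet $\{y_{(\alpha,U)}\}$ of $L^0$--convex combinations converging to a point $y \in G$; this is precisely the sufficient condition appearing in Proposition \ref{Proposition 2.12}, so $G$ is $L^0$--convexly compact with no further work.

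For the ``only if'' part I would fix a net $\{x_\alpha, \alpha \in \Gamma\}$ in $G$ and, for each $\alpha \in \Gamma$, form the tail hull
\[
C_\alpha := \overline{conv_{L^0}\{ x_{\alpha'} ~|~ \alpha' \geq \alpha \}}.
\]
I would first check that each $C_\alpha$ is a closed $L^0$--convex subset of $G$: it is contained in $G$ because $G$ is closed and $L^0$--convex and contains every $x_{\alpha'}$, and it is $L^0$--convex because the $\mathcal{T}$--closure of an $L^0$--convex set is again $L^0$--convex, which follows from the continuity of addition and of scalar multiplication in the topological module $(E,\mathcal{T})$ over $(L^0(\mathcal{F},K),\mathcal{T}_{\varepsilon,\lambda})$. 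Using the directedness of $\Gamma$, any finite subfamily $C_{\alpha_1},\dots,C_{\alpha_n}$ contains some $C_{\alpha^\ast}$ with $\alpha^\ast \geq \alpha_i$ for all $i$, so the family $\{C_\alpha : \alpha \in \Gamma\}$ has the finite intersection property. Definition \ref{Definition 2.9} then yields a point $y \in \bigcap_{\alpha \in \Gamma} C_\alpha \subseteq G$, which will be the limit.

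Next I would build the subnet. For each $(\alpha,U) \in \Gamma \times \mathcal{U}$, since $y \in C_\alpha$ and $y+U$ is a $\mathcal{T}$--neighbourhood of $y$, the set $(y+U) \cap conv_{L^0}\{x_{\alpha'} : \alpha' \geq \alpha\}$ is nonempty, and I would pick $y_{(\alpha,U)}$ in it, which immediately gives the asserted membership. Convergence $y_{(\alpha,U)} \to y$ follows from the $\mathcal{U}$--coordinate: given a neighbourhood $V$ of $y$, pick $U_0 \in \mathcal{U}$ with $y+U_0 \subseteq V$ and any $\alpha_0$, so that $(\alpha,U) \geq (\alpha_0,U_0)$ forces $U \subseteq U_0$ and hence $y_{(\alpha,U)} \in y+U \subseteq V$. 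To see that $\{y_{(\alpha,U)}\}$ is a subnet of $L^0$--convex combinations in the sense of Definition \ref{Definition 2.11}, I would use the fact that $conv_{L^0}(S)$ equals the set of finite $L^0$--convex combinations $\sum_i \xi_i s_i$ ($\xi_i \in L^0_+(\mathcal{F})$, $\sum_i \xi_i = 1$, $s_i \in S$), since that set is $L^0$--convex, contains $S$, and lies inside every $L^0$--convex set containing $S$. Writing $y_{(\alpha,U)} = \sum_i \xi_i x_{\alpha'_i}$ with all $\alpha'_i \geq \alpha$ and setting $D(\alpha,U) := \{\alpha'_i\} \in Fin(\Gamma)$, item (1) of Definition \ref{Definition 2.11} holds by construction; item (2) holds because, for a prescribed $\alpha$, the choice $\beta = (\alpha,U)$ works, as every $(\alpha',U') \geq \beta$ gives $D(\alpha',U') \subseteq \{\gamma : \gamma \geq \alpha'\} \subseteq \{\gamma : \gamma \geq \alpha\}$ by transitivity of the order on $\Gamma$.

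I expect the substantive content to lie entirely in the ``only if'' direction, and specifically in the idea of applying $L^0$--convex compactness to the family of tail hulls $C_\alpha$ to manufacture the limit candidate $y \in G$; the accompanying fact that $\mathcal{T}$--closure preserves $L^0$--convexity is what makes the $C_\alpha$ admissible in Definition \ref{Definition 2.9}. Once $y$ is in hand, the remaining work is bookkeeping: the product index set $\Gamma \times \mathcal{U}$ with its coordinatewise order is engineered so that the $\Gamma$--coordinate secures the tail membership and item (2), while the $\mathcal{U}$--coordinate secures convergence, and I do not anticipate any genuine difficulty there.
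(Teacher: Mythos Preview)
Your proof is correct. The ``if'' direction matches the paper's (which simply defers to Proposition~\ref{Proposition 2.12}). For the ``only if'' direction you take a slightly different and in fact more direct route: the paper first invokes the necessity half of Proposition~\ref{Proposition 2.12} to obtain a convergent subnet $\{z_\beta\}$ of $L^0$--convex combinations with limit $y$, and then argues from Definition~\ref{Definition 2.11}(2) that $y$ lies in every tail closure $\overline{conv_{L^0}\{x_{\alpha'}:\alpha'\geq\alpha\}}$; you instead apply Definition~\ref{Definition 2.9} directly to the family of tail hulls $C_\alpha$, using the finite intersection property to produce $y\in\bigcap_\alpha C_\alpha$. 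Both approaches land on the same $y$ and the same selection of $y_{(\alpha,U)}\in(y+U)\cap conv_{L^0}\{x_{\alpha'}:\alpha'\geq\alpha\}$, so the remaining bookkeeping is identical. Your route has the advantage of being self-contained (it does not need the black-box necessity of Proposition~\ref{Proposition 2.12}) and of making transparent the one nontrivial ingredient, namely that $\mathcal{T}$--closure preserves $L^0$--convexity; the paper's route has the advantage of reusing an already-stated characterization. Your explicit verification that $\{y_{(\alpha,U)}\}$ satisfies Definition~\ref{Definition 2.11} is a detail the paper leaves implicit.
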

\begin{proof}
Sufficiency is similar to that of Proposition 2.4 of \cite{Zit10}, so is omitted.

Necessity. By the necessity of Proposition \ref{Proposition 2.12} there exists a subnet $\{ z_{\beta}, \beta \in B \}$ of $L^0$--convex combinations of $\{ x_{\alpha}, \alpha \in \Gamma \}$ such that $z_{\beta}$ converges to some $y \in E$. By Definition \ref{Definition 2.11}, there exists a mapping $D ~:~ B \to Fin(\Gamma)$ such that the following two items hold:
\begin{enumerate}[(1)]
\item $z_{\beta} \in conv_{L^0} \{ x_{\alpha}, \alpha \in D(\beta) \}$ for each $\beta \in B$;
\item for each $\alpha \in \Gamma$ there exists $\beta \in B$ such that $\alpha^{\prime} \geq \alpha$ for each $\alpha^{\prime} \in \bigcup_{\beta^{\prime} \geq \beta}D(\beta^{\prime})$.
\end{enumerate}

From (2), one can see that for each $\alpha \in \Gamma$ there exists $\beta \in B$ such that $z_{\beta^{\prime}} \in conv_{L^0} \{ x_{\alpha^{\prime}}, \alpha^{\prime} \in D(\beta^{\prime}) \} \subset conv_{L^0}\{ x_{\alpha^{\prime \prime}}, \alpha^{\prime \prime} \geq \alpha \}$ for each $\beta^{\prime} \geq \beta$, so that $y$ must belong to $\overline{conv_{L^0}\{ x_{\alpha^{\prime}}, \alpha^{\prime} \geq \alpha \}}$ (namely, the closure of $conv_{L^0}\{ x_{\alpha^{\prime}}, \alpha^{\prime} \geq \alpha \}$). Of course, for each $\alpha \in \Gamma$ and each $U \in \mathcal{U}$ there exists $y_{(\alpha,U)} \in (y + U) \bigcap conv_{L^0}\{ x_{\alpha^{\prime}}, \alpha^{\prime} \geq \alpha \}$, it is also obvious that the net $\{ y_{(\alpha,U)}, (\alpha,U) \in \Gamma \times \mathcal{U} \}$ converges to $y$.
\end{proof}
\begin{corollary} \label{Corollary 2.14}
Let $(E,\mathcal{T})$ be a metrizable topological module over the topological algebra $(L^0(\mathcal{F},K),\mathcal{T}_{\varepsilon,\lambda})$ and $G$ an $L^0$--convexly compact closed $L^0$--convex subset of $E$. Then for each sequence $\{ x_n, n \in N \}$ in $G$ there exists a sequence $\{ y_n, n \in N \}$ of forward $L^0$--convex combinations of $\{ x_n, n \in N \}$ $($namely $y_n \in conv_{L^0} \{ x_k, k \geq n \}$ for each $n \in N)$ such that $y_n$ converges to some $y \in G$.
\end{corollary}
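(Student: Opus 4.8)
The plan is to derive Corollary \ref{Corollary 2.14} as a direct specialization of Proposition \ref{Proposition 2.13} to the metrizable case. Since $(E,\mathcal{T})$ is metrizable, the topology $\mathcal{T}$ admits a countable local base at $\theta$, say $\mathcal{U} = \{U_m : m \in N\}$, which we may assume to be decreasing ($U_{m+1} \subset U_m$) by replacing each $U_m$ with $\bigcap_{k \leq m} U_k$. The given sequence $\{x_n, n \in N\}$ is in particular a net indexed by $\Gamma = N$ with its usual order. Applying Proposition \ref{Proposition 2.13} with this $\Gamma$ and this countable $\mathcal{U}$ yields a subnet $\{y_{(n,U_m)}, (n,U_m) \in N \times \mathcal{U}\}$ of $L^0$--convex combinations converging to some $y \in G$, with each $y_{(n,U_m)} \in (y + U_m) \bigcap conv_{L^0}\{x_k : k \geq n\}$.

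The key step is to extract from this net a genuine \emph{sequence} of forward $L^0$--convex combinations. First I would choose a diagonal sequence: for each $m \in N$ set $y_m := y_{(m, U_m)}$, so that $y_m \in (y + U_m) \bigcap conv_{L^0}\{x_k : k \geq m\}$. The membership $y_m \in conv_{L^0}\{x_k : k \geq m\}$ is exactly the forward $L^0$--convex combination property demanded by the statement. For convergence, observe that since $\{U_m\}$ is a decreasing countable local base and $y_m \in y + U_m$, for any neighborhood $V$ of $\theta$ there is an $m_0$ with $U_{m_0} \subset V$, whence $y_m - y \in U_m \subset U_{m_0} \subset V$ for all $m \geq m_0$ (using that the base is decreasing). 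Thus $y_m \to y$, and $y \in G$ because $G$ is closed and contains the net, or more directly because $y$ is the limit point furnished by Proposition \ref{Proposition 2.13}.

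The only point requiring mild care is verifying that the diagonal selection $(m, U_m)$ is legitimate, i.e. that for each $m$ the element $y_{(m,U_m)}$ exists with the stated two properties; but this is immediate since Proposition \ref{Proposition 2.13} guarantees $y_{(\alpha,U)}$ for \emph{every} pair $(\alpha, U) \in \Gamma \times \mathcal{U}$, so in particular for the diagonal pairs $(m, U_m)$. No genuine obstacle arises here — the content has already been absorbed into Proposition \ref{Proposition 2.13}, whose necessity half does the real work of producing $L^0$--convex combinations drawn from the tail $\{x_{\alpha'} : \alpha' \geq \alpha\}$ and converging to a point of $G$. The corollary is therefore a routine metrizability-plus-diagonalization argument on top of that proposition.

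I would remark only that the indexing convention in Proposition \ref{Proposition 2.13} uses the product direction on $\Gamma \times \mathcal{U}$; the diagonal $\{(m, U_m)\}$ is cofinal in $N \times \mathcal{U}$ precisely because $N$ is cofinal in $N$ and $\{U_m\}$ is a neighborhood base ordered by reverse inclusion, so passing to this subsequence of the net preserves both the combination structure and the limit. This is the expected mechanism and I anticipate no hidden difficulty.
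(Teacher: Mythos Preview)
Your proof is correct and follows essentially the same approach as the paper: choose a countable decreasing local base $\{U_n\}$, use Proposition~\ref{Proposition 2.13} (or the key fact from its proof that $y \in \overline{conv_{L^0}\{x_k, k \geq n\}}$ for each $n$), and then select $y_n \in (y+U_n)\cap conv_{L^0}\{x_k, k\geq n\}$. Your diagonal choice $y_m := y_{(m,U_m)}$ is exactly this selection, so the two arguments coincide.
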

\begin{proof}
Let $\mathcal{U} = \{ U_n, n \in N \}$ be a countable local base of $E$ such that $U_{n+1} \subset U_n$ for each $n \in N$. Further, let $y$ be as in the proof of Proposition \ref{Proposition 2.13}, then $y \in \overline{conv_{L^0}\{ x_k, k \geq n \}}$ for each $n \in N$, taking $y_n \in (y + U_n) \bigcap conv_{L^0}\{ x_k, k \geq n \}$ for each $n \in N$ completes the proof.
\end{proof}

For a random locally convex module $(E,\mathcal{P})$ over $K$ with base $(\Omega,\mathcal{F},P)$. $(E, \mathcal{T}_{\varepsilon,\lambda})$ is just a topological module over the topological algebra $(L^0(\mathcal{F},K),\mathcal{T}_{\varepsilon,\lambda})$. For an $L^0$--convexly compact closed $L^0$--convex subset $G$ of $E$ and any net $\{ x_{\alpha}, \alpha \in \Gamma \}$ in $G$, the subnet $\{ y_{\beta}, \beta \in B \}$ or $\{ y_{(\alpha,U)}, (\alpha,U) \in \Gamma \times \mathcal{U} \}$ of $L^0$--convex combinations of $\{ x_{\alpha}, \alpha \in \Gamma \}$, which are obtained as in Proposition \ref{Proposition 2.12} and Proposition \ref{Proposition 2.13} respectively, converges in $\mathcal{T}_{\varepsilon,\lambda}$ to $y$. However, we are often forced to look for a net convergent to this $y$ with respect to the stronger topology--the locally $L^0$--convex topology $\mathcal{T}_c$. For this, let us recall some known results: \cite[Theorem 3.12]{Guo10} says that $\overline{M}_{\varepsilon,\lambda} = \overline{M}_{c}$ for any subset $M$ of $E$ such that $M$ has the countable concatenation property, where $\overline{M}_{\varepsilon,\lambda}$ and $\overline{M}_{c}$ stand for the closures of $M$ under the $(\varepsilon,\lambda)$--topology $\mathcal{T}_{\varepsilon,\lambda}$ and the locally $L^0$--convex topology $\mathcal{T}_c$ respectively. \cite[Theorem 3.12]{Guo10} leads directly to the following:
\begin{lemma} \label{Lemma 2.15}\cite[Lemma 3.10]{GZZ15b}
Let $(E,\mathcal{P})$ be a random locally convex module over $K$ with base $(\Omega,\mathcal{F},P)$ such that $E$ has the countable concatenation property. If $M$ is $L^0$--convex, then $\overline{M}_{\varepsilon,\lambda} = [H_{cc}(M)]^{-}_{c}$, where $H_{cc}(M)$ is the countable concatenation hull of $M$.
\end{lemma}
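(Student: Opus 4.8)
The plan is to reduce the identity to the already-quoted \cite[Theorem 3.12]{Guo10} and then concentrate on a single inclusion. Since $H_{cc}(M)$ has the countable concatenation property by its very definition, that theorem applied to $H_{cc}(M)$ yields $[H_{cc}(M)]^{-}_{\varepsilon,\lambda} = [H_{cc}(M)]^{-}_{c}$. Hence it suffices to prove $\overline{M}_{\varepsilon,\lambda} = [H_{cc}(M)]^{-}_{\varepsilon,\lambda}$. One inclusion is free: $M \subseteq H_{cc}(M)$ gives $\overline{M}_{\varepsilon,\lambda} \subseteq [H_{cc}(M)]^{-}_{\varepsilon,\lambda}$. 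So the whole content lies in the reverse inclusion, and for this it is enough (taking $\mathcal{T}_{\varepsilon,\lambda}$-closures, as $\overline{M}_{\varepsilon,\lambda}$ is $\mathcal{T}_{\varepsilon,\lambda}$-closed) to prove
\[ H_{cc}(M) \subseteq \overline{M}_{\varepsilon,\lambda}. \]

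First I would pin down the concrete shape of $H_{cc}(M)$. Let $S$ denote the set of all countable concatenations $\sum_{n} \tilde{I}_{A_n} g_n$ with each $g_n \in M$ and $\{A_n\}$ a countable partition of $\Omega$ in $\mathcal{F}$; these elements are well defined and unique because $E$ has the countable concatenation property. Clearly $M \subseteq S$ (take the trivial partition), and $S$ itself has the countable concatenation property: concatenating a sequence $s_k = \sum_j \tilde{I}_{A^k_j} g^k_j \in S$ along a partition $\{B_k\}$ produces $\sum_{k,j}\tilde{I}_{B_k \cap A^k_j} g^k_j$, and $\{B_k \cap A^k_j\}_{k,j}$ is again a countable partition of $\Omega$. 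By minimality of the countable concatenation hull this forces $H_{cc}(M) = S$.

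The key step is then the approximation of a countable concatenation by finite ones in the topology of convergence in probability, and this is where $L^0$-convexity of $M$ enters. For $x, y \in M$ and $A \in \mathcal{F}$ one has $\tilde{I}_A x + \tilde{I}_{A^c} y = \tilde{I}_A x + (1-\tilde{I}_A) y$, an $L^0$-convex combination since $\tilde{I}_A \in L^0_+(\mathcal{F})$ and $0 \leq \tilde{I}_A \leq 1$; by induction over finer partitions every finite concatenation of elements of $M$ again lies in $M$. Now fix $g = \sum_n \tilde{I}_{A_n} g_n \in H_{cc}(M)$, put $C_N = \bigcup_{n>N} A_n$, and set $h_N = \sum_{n=1}^{N} \tilde{I}_{A_n} g_n + \tilde{I}_{C_N} g_1$, a finite concatenation, so $h_N \in M$. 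Since $g$ and $h_N$ agree off $C_N$, one computes $g - h_N = \tilde{I}_{C_N}(g - g_1)$, hence for every finite $Q \subseteq \mathcal{P}$ the $L^0$-seminorm satisfies $\|g - h_N\|_Q = \tilde{I}_{C_N}\,\|g - g_1\|_Q$, which vanishes off $C_N$. Consequently $\{\,\|g-h_N\|_Q \geq \varepsilon\,\} \subseteq C_N$ and $P\{\,\|g-h_N\|_Q \geq \varepsilon\,\} \leq P(C_N) \to 0$, so $h_N \to g$ in $\mathcal{T}_{\varepsilon,\lambda}$ and $g \in \overline{M}_{\varepsilon,\lambda}$. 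This establishes $H_{cc}(M) \subseteq \overline{M}_{\varepsilon,\lambda}$ and completes the argument.

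The main obstacle is exactly this last approximation: it works only because $\mathcal{T}_{\varepsilon,\lambda}$ is the topology of convergence in probability, so that the negligible tails $C_N$ (with $P(C_N)\to 0$) kill the discrepancy $\tilde{I}_{C_N}(g-g_1)$ regardless of its $L^0$-seminorm size. Two features are essential and should be flagged. First, $L^0$-convexity is precisely what places the finite truncations $h_N$ back into $M$ (a merely convex $M$ would not suffice). Second, one cannot replace $\overline{M}_{\varepsilon,\lambda}$ by the $\mathcal{T}_c$-closure of $M$, since $h_N$ need not converge to $g$ in the stronger topology $\mathcal{T}_c$; this mismatch is exactly why the lemma pairs the $(\varepsilon,\lambda)$-closure of $M$ with the larger $\mathcal{T}_c$-closure of $H_{cc}(M)$.
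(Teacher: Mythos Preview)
Your proof is correct. The paper does not supply its own proof of this lemma: it merely states the result with a citation to \cite[Lemma 3.10]{GZZ15b} and remarks that \cite[Theorem 3.12]{Guo10} ``leads directly'' to it. Your argument makes this precise in exactly the intended way: you invoke \cite[Theorem 3.12]{Guo10} on $H_{cc}(M)$ to reduce to $\overline{M}_{\varepsilon,\lambda} = [H_{cc}(M)]^{-}_{\varepsilon,\lambda}$, and then prove $H_{cc}(M)\subseteq\overline{M}_{\varepsilon,\lambda}$ by approximating countable concatenations with finite ones (which $L^0$-convexity keeps inside $M$) and using that $P(C_N)\to 0$ forces convergence in $\mathcal{T}_{\varepsilon,\lambda}$. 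This is the natural and standard route, and your closing remarks correctly isolate why $L^0$-convexity (not mere convexity) is needed and why the conclusion pairs the $(\varepsilon,\lambda)$-closure of $M$ with the $\mathcal{T}_c$-closure of $H_{cc}(M)$.
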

\begin{theorem}\label{Theorem 2.16}
Let $(E,\mathcal{P})$ be a random locally convex module over $K$ with base $(\Omega,\mathcal{F},P)$ such that $E$ has the countable concatenation property. Further, if $G$ is an $L^0$--convexly compact $\mathcal{T}_{\varepsilon,\lambda}$--closed $L^0$--convex subset of the topological module $(E,\mathcal{T}_{\varepsilon,\lambda})$ and $\{ x_{\alpha}, \alpha \in \Gamma \}$ is a net in $G$, then there exists a net $\{ y_{(\alpha ,U)}, (\alpha,U) \in \Gamma \times \mathcal{U} \}$ convergent to some $y \in G$ with respect to the locally $L^0$--convex topology $\mathcal{T}_c$, where $\mathcal{U}$ is an arbitrarily chosen local base of $\mathcal{T}_c$, for example $\mathcal{U} = \{ U_{\theta}(Q,\varepsilon) ~|~ Q \subset \mathcal{P}$ finite, $\varepsilon \in L^0_{++}(\mathcal{F}) \}$ as in Proposition \ref{Proposition 2.6}, as usual, $\Gamma \times \mathcal{U}$ is directed by $(\alpha_1, U_1) \leq (\alpha_2, U_2)$ iff $\alpha_1 \leq \alpha_2$ and $U_2 \subset U_1$. Besides, $y_{(\alpha,U)} \in (y + U) \bigcap H_{cc}(conv_{L^0}\{ x_{\alpha^{\prime}}, \alpha^{\prime} \geq \alpha \})$ for each $(\alpha, U) \in \Gamma \times \mathcal{U}$.
\end{theorem}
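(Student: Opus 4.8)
The plan is to reduce everything to the single limit point $y$ furnished by $L^0$--convex compactness and then to upgrade the mode of convergence from $\mathcal{T}_{\varepsilon,\lambda}$ to the stronger $\mathcal{T}_c$ by invoking Lemma \ref{Lemma 2.15}. First I would apply the necessity part of Proposition \ref{Proposition 2.13} (which itself rests on Proposition \ref{Proposition 2.12}) to the net $\{ x_{\alpha}, \alpha \in \Gamma \}$ in $G$. Because $G$ is $\mathcal{T}_{\varepsilon,\lambda}$--closed and $L^0$--convex, every $L^0$--convex combination of points of $G$ remains in $G$, so the limit $y$ of the subnet of $L^0$--convex combinations lies in $G$; moreover, the very argument carried out in the proof of Proposition \ref{Proposition 2.13} shows that this one and the same $y$ belongs to $\overline{conv_{L^0}\{ x_{\alpha^{\prime}}, \alpha^{\prime} \geq \alpha \}}_{\varepsilon,\lambda}$ for every $\alpha \in \Gamma$.

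The key step is to rewrite this $(\varepsilon,\lambda)$--closure as a $\mathcal{T}_c$--closure. Since $E$ has the countable concatenation property and each set $conv_{L^0}\{ x_{\alpha^{\prime}}, \alpha^{\prime} \geq \alpha \}$ is $L^0$--convex by construction, Lemma \ref{Lemma 2.15} applies with $M = conv_{L^0}\{ x_{\alpha^{\prime}}, \alpha^{\prime} \geq \alpha \}$ and yields
$$\overline{conv_{L^0}\{ x_{\alpha^{\prime}}, \alpha^{\prime} \geq \alpha \}}_{\varepsilon,\lambda} = \big[ H_{cc}(conv_{L^0}\{ x_{\alpha^{\prime}}, \alpha^{\prime} \geq \alpha \}) \big]^{-}_{c}.$$
Hence $y$ lies in the $\mathcal{T}_c$--closure of $H_{cc}(conv_{L^0}\{ x_{\alpha^{\prime}}, \alpha^{\prime} \geq \alpha \})$ for every $\alpha$. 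Consequently, for each $\alpha \in \Gamma$ and each $U \in \mathcal{U}$ the $\mathcal{T}_c$--neighborhood $y + U$ of $y$ meets $H_{cc}(conv_{L^0}\{ x_{\alpha^{\prime}}, \alpha^{\prime} \geq \alpha \})$, so I may pick
$$y_{(\alpha,U)} \in (y + U) \cap H_{cc}(conv_{L^0}\{ x_{\alpha^{\prime}}, \alpha^{\prime} \geq \alpha \}),$$
which is exactly the membership asserted in the theorem.

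It then remains to check that the net $\{ y_{(\alpha,U)}, (\alpha,U) \in \Gamma \times \mathcal{U} \}$, indexed by the stated product direction, converges to $y$ in $\mathcal{T}_c$. Given any $\mathcal{T}_c$--neighborhood $V$ of $y$, I would choose $U_0 \in \mathcal{U}$ with $y + U_0 \subset V$ and fix an arbitrary $\alpha_0 \in \Gamma$; then for every $(\alpha,U) \geq (\alpha_0,U_0)$ one has $U \subset U_0$, whence $y_{(\alpha,U)} \in y + U \subset y + U_0 \subset V$, giving the desired convergence.

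The only genuinely delicate point is the passage in the second paragraph. The two random topologies differ, and the convergence of $L^0$--convex combinations supplied by Proposition \ref{Proposition 2.13} is a priori only in the coarser $\mathcal{T}_{\varepsilon,\lambda}$. The hypothesis that $E$ has the countable concatenation property, together with Lemma \ref{Lemma 2.15}, is precisely what lets me trade the $\mathcal{T}_{\varepsilon,\lambda}$--closure of the $L^0$--convex hull for the $\mathcal{T}_c$--closure of its countable concatenation hull; this is also the reason the approximating elements $y_{(\alpha,U)}$ must be drawn from $H_{cc}(conv_{L^0}\{ x_{\alpha^{\prime}}, \alpha^{\prime} \geq \alpha \})$ rather than from $conv_{L^0}\{ x_{\alpha^{\prime}}, \alpha^{\prime} \geq \alpha \}$ itself.
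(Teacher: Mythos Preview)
Your proposal is correct and follows exactly the same approach as the paper's proof: obtain $y$ from Proposition \ref{Proposition 2.13}, apply Lemma \ref{Lemma 2.15} to pass from the $\mathcal{T}_{\varepsilon,\lambda}$--closure of $conv_{L^0}\{x_{\alpha'},\alpha'\geq\alpha\}$ to the $\mathcal{T}_c$--closure of its countable concatenation hull, and then pick $y_{(\alpha,U)}$ in the resulting intersection. You have simply supplied more detail (e.g., the explicit verification of $\mathcal{T}_c$--convergence and the remark that $y\in G$), but the argument is the same.
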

\begin{proof}
Let $y$ be as stated in Proposition \ref{Proposition 2.13}, then $y \in \overline{(conv_{L^0}\{ x_{\alpha^{\prime}}, \alpha^{\prime} \geq \alpha \})}_{\varepsilon,\lambda}$ for each $\alpha \in \Gamma$. Now, by Lemma \ref{Lemma 2.15}, $y \in \overline{H_{cc}(conv_{L^0}\{ x_{\alpha^{\prime}}, \alpha^{\prime} \geq \alpha \})}_{c}$, then for each $(\alpha, U) \in \Gamma \times \mathcal{U}$, taking $y_{(\alpha,U)} \in (y + U) \bigcap H_{cc}(conv_{L^0}\{ x_{\alpha^{\prime}}, \alpha^{\prime} \geq \alpha \})$ completes the proof.
\end{proof}

The characterization concerning $L^0$--convexly compact sets in $L^0(\mathcal{F})$ is the following:
\begin{theorem}\label{Theorem 2.17}
A $\mathcal{T}_{\varepsilon,\lambda}$--closed $L^0$--convex subset $G$ of $L^0(\mathcal{F})$ is $L^0$--convexly compact iff $G$ is bounded in order, namely there exists $\xi \in L^0_+(\mathcal{F})$ such that $|\eta| \leq \xi$ for any $\eta \in G$.
\end{theorem}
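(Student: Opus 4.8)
The plan is to prove the two implications separately, reducing the ``order bounded $\Rightarrow$ $L^0$--convexly compact'' direction to \v{Z}itkovi\'{c}'s theorem on $L^0_+(\mathcal{F})$ recalled in the introduction, and proving the converse by a direct construction that exploits the conditional lattice structure of $L^0(\mathcal{F})$ together with $L^0$--convexity. For sufficiency, suppose $|\eta| \le \xi$ for all $\eta \in G$ and some $\xi \in L^0_+(\mathcal{F})$, i.e.\ $G \subset [-\xi,\xi] := \{ \eta \in L^0(\mathcal{F}) : -\xi \le \eta \le \xi \}$. First I would check that the order interval $[0,2\xi] \subset L^0_+(\mathcal{F})$ is bounded in the linear topology: since $\xi$ is finite a.s., $P(2\xi > R) \to 0$ as $R \to +\infty$, whence $\sup_{f \in [0,2\xi]} P(f > R) \le P(2\xi > R) \to 0$. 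Thus \v{Z}itkovi\'{c}'s result \cite{Zit10} gives that $[0,2\xi]$ is convexly compact, and since the translation $\eta \mapsto \eta - \xi$ is an affine $\mathcal{T}_{\varepsilon,\lambda}$--homeomorphism (and convex compactness is phrased purely through closed convex subsets and the finite intersection property), $[-\xi,\xi] = [0,2\xi] - \xi$ is convexly compact as well. Because every $L^0$--convex set is convex, convex compactness of $[-\xi,\xi]$ implies its $L^0$--convex compactness (cf.\ Remark \ref{Remark 2.10}); and as $G$ is a $\mathcal{T}_{\varepsilon,\lambda}$--closed $L^0$--convex subset of the $L^0$--convexly compact set $[-\xi,\xi]$, any family of closed $L^0$--convex subsets of $G$ with the finite intersection property is also such a family inside $[-\xi,\xi]$, hence has nonempty intersection, so $G$ is $L^0$--convexly compact.

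For necessity I would argue the contrapositive: if $G$ is not bounded in order then it is not $L^0$--convexly compact. Unboundedness in order means $\xi^{\ast} := \bigvee\{ |\eta| : \eta \in G \}$ satisfies $P(\xi^{\ast} = +\infty) > 0$. By Proposition \ref{Proposition 1.3}(1) pick $\eta_k \in G$ with $\bigvee_k |\eta_k| = \xi^{\ast}$; writing $s^{+} = \bigvee_k \eta_k$ and $s^{-} = \bigvee_k (-\eta_k)$ we have $\xi^{\ast} = s^{+} \vee s^{-}$, so at least one of $P(s^{+} = +\infty) > 0$ or $P(s^{-} = +\infty) > 0$ holds; assume the former (the latter is symmetric, using $\bigwedge$ in place of $\bigvee$). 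The key step is to replace $\{ \eta_k \}$ by a monotone sequence inside $G$: using $L^0$--convexity, for the partition $\{ B_1, \dots, B_n \}$ of $\Omega$ determined by which of $\eta_1, \dots, \eta_n$ is largest, the lattice supremum $\zeta_n := \bigvee_{k \le n} \eta_k = \sum_{k \le n} \tilde{I}_{B_k} \eta_k$ is a finite $L^0$--convex combination of elements of $G$, hence lies in $G$; moreover $\{ \zeta_n \}$ is nondecreasing with $\zeta_n \uparrow s^{+}$ and $P(s^{+} = +\infty) > 0$.

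Finally I would form the nested closed $L^0$--convex sets $F_n := \overline{conv_{L^0}\{ \zeta_k : k \ge n \}}$, which are subsets of $G$ (since $G$ is $\mathcal{T}_{\varepsilon,\lambda}$--closed and $L^0$--convex) and enjoy the finite intersection property. As the $\zeta_k$ are nondecreasing, any $y \in conv_{L^0}\{ \zeta_k : k \ge n \}$ is a finite $L^0$--convex combination of elements $\ge \zeta_n$, so $y \ge \zeta_n$; since $\{ y : y \ge \zeta_n \}$ is $\mathcal{T}_{\varepsilon,\lambda}$--closed, we get $F_n \subset \{ y \in G : y \ge \zeta_n \}$. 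Were $G$ $L^0$--convexly compact, some $y \in \bigcap_n F_n$ would satisfy $y \ge \zeta_n$ for all $n$, hence $y \ge s^{+} = +\infty$ on a set of positive measure, which is impossible for an element of $L^0(\mathcal{F})$. I expect the main obstacle here, relative to the $L^0_+(\mathcal{F})$ setting, to be precisely that in $L^0(\mathcal{F})$ convex combinations of elements of large modulus may cancel, so unboundedness alone does not prevent convergence; this is exactly what the passage to the monotone sequence $\zeta_n$ circumvents, since monotonicity forces forward $L^0$--convex combinations to stay order--bounded below by $\zeta_n$ and thereby drives the contradiction.
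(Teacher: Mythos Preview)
Your proof is correct. The necessity argument is essentially the same as the paper's: both exploit that $L^0$--convexity makes $G$ directed upwards (your construction of $\zeta_n = \bigvee_{k\le n}\eta_k$ via an indicator--weighted combination is exactly how the paper would justify directedness), obtain a nondecreasing sequence in $G$ diverging to $+\infty$ on a set of positive measure, and then use that forward $L^0$--convex combinations of a nondecreasing sequence dominate its $n$th term. The only cosmetic difference is that the paper packages the last step through Corollary~\ref{Corollary 2.14} (extracting a $\mathcal{T}_{\varepsilon,\lambda}$--convergent sequence of forward $L^0$--convex combinations and reaching the contradiction $y\ge b=+\infty$ on $A$), whereas you work directly from the finite--intersection--property definition via the nested sets $F_n$; the underlying mechanism is identical.

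Your sufficiency argument, however, is genuinely different. The paper observes that a $\mathcal{T}_{\varepsilon,\lambda}$--closed $L^0$--convex order--bounded subset of $L^0(\mathcal{F})$ is in fact a random closed interval $[a,b]$ (since $a=\bigwedge G$ and $b=\bigvee G$ lie in $G$ by directedness and closedness, and any $z\in[a,b]$ is an $L^0$--convex combination of $a$ and $b$); it then verifies the finite intersection property by hand, writing each finite intersection as $[a_F,b_F]$ and checking $[\,\bigvee_F a_F,\,\bigwedge_F b_F\,]\subset\bigcap_\alpha G_\alpha$. This is entirely self--contained. Your route instead imports \v{Z}itkovi\'{c}'s characterization of convex compactness in $L^0_+(\mathcal{F})$, translates, and passes to the closed $L^0$--convex subset $G$. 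Your argument is shorter once that external result is granted, but the paper's direct interval argument has the advantage of being elementary and of exhibiting explicitly the structure of $\mathcal{T}_{\varepsilon,\lambda}$--closed $L^0$--convex subsets of $L^0(\mathcal{F})$, which is of independent interest.
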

\begin{proof}
(1). Necessity. Let $a = \bigwedge G$ and $b = \bigvee G$, we only need to assert that both $a$ and $b$ belong to $L^0(\mathcal{F})$, and only give the proof of $b \in L^0(\mathcal{F})$, \ since the other is similar. By contradiction method: suppose $b ~\overline{\in} ~L^0(\mathcal{F})$, then $A := (b = +\infty)$ must satisfy $P(A) > 0$. Since the $L^0$--convexity of $G$ implies that $G$ is directed upwards, by Proposition \ref{Proposition 1.3} there exists a nondecreasing sequence $\{ g_n ~|~ n \in N \}$ in $G$ such that $g_n$ converges almost surely to $b$, and hence $\lim_{n \to \infty} \inf_{k \geq n} g_k = \underline{\lim}_{n} g_n = b$. According to $L^0$--convex compactness of $G$ and Corollary \ref{Corollary 2.14}, we can get a sequence $\{ g^{\prime}_{n} ~|~ n \in N \}$ in $G$ such that $g^{\prime}_n \in conv_{L^0} \{ g_k ~|~ k \geq n \}$ for each $n \in N$ and $y \in L^0(\mathcal{F})$ satisfying that $\{ g^{\prime}_n ~|~ n \in N \}$ converges in probability to $y$. It is obvious that $g^{\prime}_n \geq \inf_{k \geq n} g_{k}$ for each $n \in N$, which implies $y \geq b$, in particular $y \geq b = +\infty$ on $A$, a contradiction to $y \in L^0(\mathcal{F})$.

(2). Sufficiency.  Since $G$ must be a random closed interval of $L^0(\mathcal{F})$, let $G = [a,b]$ with $a$ and $b \in L^0(\mathcal{F})$ and $a \leq b$. Given any family $\{ G_{\alpha}, \alpha \in I \}$ of $\mathcal{T}_{\varepsilon,\lambda}$--closed $L^0$--convex subsets of $G$ such that this family has the finite intersection property, let $Fin(I)$ be the family of finite nonempty subsets of $I$ and $G_F = \bigcap_{\alpha \in F}G_{\alpha}$ for each $F \in Fin(I)$, then each $G_F$ is a nonempty random closed interval of $L^0(\mathcal{F})$, written as $[a_F,b_F]$, where $a_F$ and $b_F \in L^0(\mathcal{F})$ and $a_F \leq b_F$. Since $Fin(I)$ is a directed set via the order $\leq : F_1 \leq F_2$ iff $F_1 \subset F_2$, then $\{ a_F ~|~ F \in Fin(I) \}$ is a nondecreasing net, while $\{ b_F, F \in Fin(I) \}$ is a nonincreasing net. Putting $\xi_{1} = \bigvee \{ a_F ~|~ F \in Fin(I) \}$ and $\xi_{2} = \bigwedge \{ b_F ~|~ F \in Fin(I) \}$, then $\xi_{1}$ and $\xi_{2}$ both belong to $[a,b]$ and $\xi_{1} \leq \xi_{2}$, so that $[\xi_{1}, \xi_{2}] \subset \bigcap_{\alpha \in I} G_{\alpha}$.
\end{proof}

Let $(E, \mathcal{P})$ be a random locally convex module over $K$ with base $(\Omega,\mathcal{F},P)$. Since the $(\varepsilon,\lambda)$--topology $\mathcal{T}_{\varepsilon,\lambda}$ on $E$ is a linear topology, one can speak of $\mathcal{T}_{\varepsilon,\lambda}$--boundedness. It is known and also clear that a subset $G$ of $E$ is $\mathcal{T}_{\varepsilon,\lambda}$--bounded iff for each $\|\cdot\| \in \mathcal{P}$ one can have $\lim_{n \to +\infty} \sup_{g \in G} P(\|g\| \geq n) = 0$, namely for each $\|\cdot\| \in \mathcal{P}$, $\{ \|g\| ~|~ g \in G \} \subset L^0_+(\mathcal{F}) \subset L^0(\mathcal{F})$ is bounded in probability (or, probabilistically bounded in terms of \cite{SS8305}). Another notion of boundedness is crucial for this paper: a subset $G$ of $E$ is said to be almost surely bounded if $\bigvee \{ \|g\| ~|~ g \in G \} \in L^0_+(\mathcal{F})$ for each $\|\cdot\| \in \mathcal{P}$, namely $\{ \|g\| ~|~ g \in G \} \subset L^0_+(\mathcal{F}) \subset L^0(\mathcal{F})$ is bounded in order for each $\|\cdot\| \in \mathcal{P}$. According to the resonance theorem in random normed modules \cite{Guo92,Guo96b,Guo13}, the following  result was already obtained and will be used in this paper:
\begin{proposition}\label{Proposition 2.18} \cite{Guo92,Guo96b}
Let $(E, \mathcal{P})$ be a random locally convex module over $K$ with base $(\Omega,\mathcal{F},P)$ and $G$ a subset of $E$. Then the following statements are true:
 \begin{enumerate}[(1)]
\item $G$ is $\mathcal{T}_{\varepsilon,\lambda}$--bounded iff $f(G)$ is $\mathcal{T}_{\varepsilon,\lambda}$--bounded in $L^0(\mathcal{F},K)$ for each $f \in E^{\ast}_{\varepsilon,\lambda}$.
\item $G$ is almost surely bounded iff $f(G)$ is almost surely bounded in $L^0(\mathcal{F},K)$ for each $f \in E^{\ast}_{\varepsilon,\lambda}$.
\end{enumerate}
\end{proposition}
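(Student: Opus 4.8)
The plan is to handle the two statements in parallel, since each is an instance of the same dichotomy (boundedness in probability for (1), boundedness in order for (2)), and in both the necessity direction is routine while the sufficiency direction carries the content and reduces to the resonance theorem in complete $RN$ modules. For necessity in (1), each $f\in E^{\ast}_{\varepsilon,\lambda}$ is by definition continuous from $(E,\mathcal{T}_{\varepsilon,\lambda})$ to $(L^0(\mathcal{F},K),\mathcal{T}_{\varepsilon,\lambda})$, and since $\mathcal{T}_{\varepsilon,\lambda}$ is a linear topology, such a continuous module homomorphism carries $\mathcal{T}_{\varepsilon,\lambda}$--bounded sets to $\mathcal{T}_{\varepsilon,\lambda}$--bounded sets; hence $f(G)$ is bounded. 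For necessity in (2), I would use the continuity characterization of $E^{\ast}_{\varepsilon,\lambda}$, namely that each $f$ satisfies $|f(x)|\le \xi\,\|x\|_Q$ for some finite $Q\subset\mathcal{P}$ and some $\xi\in L^0_+(\mathcal{F})$. If $G$ is almost surely bounded, then $\bigvee_{g\in G}\|g\|_Q=\bigvee_{\|\cdot\|\in Q}\bigvee_{g\in G}\|g\|\in L^0_+(\mathcal{F})$ because $Q$ is finite, so $\bigvee_{g\in G}|f(g)|\le \xi\,\bigvee_{g\in G}\|g\|_Q\in L^0_+(\mathcal{F})$, i.e. $f(G)$ is almost surely bounded.

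For sufficiency the idea is the classical uniform boundedness argument transplanted to the random setting via a reduction from the $RLC$ module to an $RN$ module. I would fix a single $\|\cdot\|\in\mathcal{P}$, set $N=\{x\in E:\|x\|=0\}$, which is an $L^0(\mathcal{F},K)$--submodule, and form the quotient $RN$ module $E_{\|\cdot\|}:=E/N$ with the induced $L^0$--norm $\|\pi(x)\|:=\|x\|$, where $\pi:E\to E_{\|\cdot\|}$ is the canonical module homomorphism. For every $h\in (E_{\|\cdot\|})^{\ast}$ the composition $h\circ\pi$ is controlled by $\|\cdot\|$ and hence lies in $E^{\ast}_{\varepsilon,\lambda}$, so the hypothesis applies: $(h\circ\pi)(G)=h(\pi(G))$ is bounded (resp. almost surely bounded) in $L^0(\mathcal{F},K)$. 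Reading this through the canonical embedding $J:E_{\|\cdot\|}\to (E_{\|\cdot\|})^{\ast\ast}$ via $J(\pi(g))(h)=h(\pi(g))=(h\circ\pi)(g)$, it says exactly that the family $\{J(\pi(g)):g\in G\}$ of bounded module homomorphisms on the $RN$ module $(E_{\|\cdot\|})^{\ast}$ is pointwise bounded (resp. pointwise almost surely bounded).

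Now I would invoke the resonance theorem of \cite{Guo92,Guo96b,Guo13}. Since $(E_{\|\cdot\|})^{\ast}$ is $\mathcal{T}_{\varepsilon,\lambda}$--complete (the random conjugate space of any $RN$ module is complete), pointwise boundedness of $\{J(\pi(g)):g\in G\}$ upgrades to boundedness of $\{\|J(\pi(g))\|:g\in G\}$, in probability or in order according to the case. Because $J$ is $L^0$--norm--preserving, $\|J(\pi(g))\|=\|\pi(g)\|=\|g\|$, so $\{\|g\|:g\in G\}$ is bounded in probability (resp. $\bigvee_{g\in G}\|g\|\in L^0_+(\mathcal{F})$); as $\|\cdot\|\in\mathcal{P}$ was arbitrary, $G$ is $\mathcal{T}_{\varepsilon,\lambda}$--bounded (resp. almost surely bounded), which is the claim.

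The main obstacle I anticipate is applying the resonance theorem correctly, since the random Banach--Steinhaus principle typically presupposes that the family of functionals possesses the countable concatenation property. I would remove this difficulty by replacing $\{J(\pi(g)):g\in G\}$ with its countable concatenation hull and noting that a concatenation $\sum_{n}\tilde{I}_{A_n}J(\pi(g_n))$ has $L^0$--norm $\sum_{n}\tilde{I}_{A_n}\|g_n\|\le \bigvee_{g\in G}\|g\|$, so that passing to the hull changes neither the pointwise bounds nor the supremum $\bigvee_{g\in G}\|g\|$ that must be controlled. With this normalization the remainder is simply the standard translation of the ``weak boundedness implies boundedness'' argument, and the two parts follow by reading each step in the appropriate boundedness sense.
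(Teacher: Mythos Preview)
The paper does not supply its own proof of Proposition~2.18: it is stated as a known consequence of the resonance theorem in random normed modules and simply cited from \cite{Guo92,Guo96b,Guo13}. Your outline is therefore being compared against the standard argument behind that citation rather than against anything written in the present paper.

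Your approach is correct and is essentially the intended one. The necessity directions are routine, and your handling of (2) via the estimate $|f(x)|\le \xi\,\|x\|_Q$ for some finite $Q\subset\mathcal{P}$ is the right characterization of $E^{\ast}_{\varepsilon,\lambda}$ in an $RLC$ module. For sufficiency, the reduction to a single $L^0$--seminorm by passing to the quotient $RN$ module $E_{\|\cdot\|}=E/N$ and then invoking the resonance theorem on the complete $RN$ module $(E_{\|\cdot\|})^{\ast}$ via the canonical embedding $J$ is exactly the mechanism that underlies the cited result; the point that $(E_{\|\cdot\|})^{\ast}$ is automatically $\mathcal{T}_{\varepsilon,\lambda}$--complete is what makes the uniform boundedness principle available without any hypothesis on $E$ itself. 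Your remark on enlarging the family $\{J(\pi(g)):g\in G\}$ to its countable concatenation hull is a sensible precaution, and your observation that this does not change the relevant suprema is correct; in fact the versions of the resonance theorem in \cite{Guo13} are formulated so that this step is already built in, but making it explicit does no harm.
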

\begin{lemma}\label{Lemma 2.19}
Let $(E, \mathcal{P})$ be a random locally convex module over $K$ with base $(\Omega,\mathcal{F},P)$ and $G$ is a $\mathcal{T}_{\varepsilon,\lambda}$--closed $L^0$--convex subset of $E$. If $G$ is $L^0$--convexly compact with respect to $\mathcal{T}_{\varepsilon,\lambda}$, then $G$ must be almost surely bounded.
\end{lemma}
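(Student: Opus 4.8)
The plan is to reduce the almost sure boundedness of $G$ to the order boundedness of its images under continuous module homomorphisms, and then to establish this order boundedness by a monotone--net argument that converts $L^0$--convex combinations into genuine lower bounds. The reduction is supplied by Proposition \ref{Proposition 2.18}(2): $G$ is almost surely bounded iff $f(G)$ is almost surely bounded in $L^0(\mathcal{F},K)$ for each $f \in E^{\ast}_{\varepsilon,\lambda}$. So it suffices to fix $f \in E^{\ast}_{\varepsilon,\lambda}$ and prove $\bigvee\{|f(g)| : g \in G\} \in L^0_+(\mathcal{F})$. I should stress at the outset why one cannot argue directly with an $L^0$--seminorm $\|\cdot\| \in \mathcal{P}$: since taking an $L^0$--convex combination can only make a seminorm smaller (by $(RNM$--$2)$), a possible ``blow--up'' of $\{\|g\|\}$ is invisible to the subnet of $L^0$--convex combinations produced by $L^0$--convex compactness. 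The remedy, and the key idea, is to pass to exact $L^0$--linear functionals, where $L^0$--convex combinations are preserved \emph{identically} and the total order of $L^0(\mathcal{F})$ turns them into lower bounds.

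I would first treat $K = R$ and argue by contradiction. Suppose $f(G)$ is not order bounded above; by Proposition \ref{Proposition 1.3}(1) there is a sequence $\{g_n, n\in N\}$ in $G$ with $\bigvee_{n} f(g_n) = +\infty$ on some $A \in \mathcal{F}$ with $P(A) > 0$. Because $f$ is a module homomorphism and $G$ is $L^0$--convex, $f(G)$ is an $L^0$--convex subset of $L^0(\mathcal{F})$; in particular each finite maximum $w_n := f(g_1) \vee \cdots \vee f(g_n)$ is a concatenation of the $f(g_i)$ over the measurable partition on which each attains the maximum, hence an $L^0$--convex combination with indicator coefficients, so $w_n \in f(G)$. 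The sequence $\{w_n\}$ is nondecreasing with $\bigvee_n w_n = +\infty$ on $A$, and I choose $\tilde{g}_n \in G$ with $f(\tilde{g}_n) = w_n$.

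Next I would invoke the $L^0$--convex compactness of $G$ via Proposition \ref{Proposition 2.12} applied to the net $\{\tilde{g}_n, n \in N\}$: there is a subnet $\{y_{\beta}, \beta \in B\}$ of $L^0$--convex combinations, with $y_{\beta} \in conv_{L^0}\{\tilde{g}_k : k \in D(\beta)\}$ for a finite $D(\beta) \subset N$, converging in $\mathcal{T}_{\varepsilon,\lambda}$ to some $y \in G$. The decisive point is monotonicity: since $f(y_{\beta}) \in conv_{L^0}\{w_k : k \in D(\beta)\}$ and $\{w_k\}$ is nondecreasing, every such combination satisfies $f(y_{\beta}) \geq w_{\min D(\beta)}$, while item (2) of Definition \ref{Definition 2.11} forces $\min D(\beta) \to \infty$ cofinally. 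Hence, for each fixed $n$, $f(y_{\beta}) \geq w_n$ eventually; as convergence in probability preserves the closed inequality ``$\,\cdot\, \geq w_n$'', the limit obeys $f(y) \geq w_n$ for all $n$, whence $f(y) \geq \bigvee_n w_n = +\infty$ on $A$, contradicting $f(y) \in L^0(\mathcal{F})$.

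Finally, applying the same argument to $-f \in E^{\ast}_{\varepsilon,\lambda}$ bounds $f(G)$ below, so $f(G)$ is order bounded, i.e.\ almost surely bounded, in the real case; the complex case $K = C$ follows by running the above for the real $L^0$--linear maps $\mathrm{Re}\,f$, $\mathrm{Re}\,(-f)$, $\mathrm{Re}\,(if)$, $\mathrm{Re}\,(-if)$ and using $|z| \leq |\mathrm{Re}\,z| + |\mathrm{Im}\,z|$ together with $\mathrm{Re}\,(if) = -\,\mathrm{Im}\,f$. Feeding the conclusion back into Proposition \ref{Proposition 2.18}(2) then yields that $G$ is almost surely bounded. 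The main obstacle, as indicated above, is not any single estimate but the structural realization that one must leave the seminorm picture entirely and exploit the interplay between the exact linearity of $E^{\ast}_{\varepsilon,\lambda}$ and the order monotonicity of $L^0(\mathcal{F})$; once that is in place, the net bookkeeping of Proposition \ref{Proposition 2.12} and Definition \ref{Definition 2.11} delivers the contradiction routinely.
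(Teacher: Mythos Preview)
Your proof is correct. Both you and the paper reduce the question via Proposition~\ref{Proposition 2.18}(2) to showing that $f(G)$ is almost surely bounded for every $f\in E^{\ast}_{\varepsilon,\lambda}$, but the routes then diverge. The paper proceeds structurally: it first proves that $f(G)$ is $\mathcal{T}_{\varepsilon,\lambda}$--closed (a nontrivial step requiring a careful a.s.--convergence argument with the subnet bookkeeping of Definition~\ref{Definition 2.11}), then that $f(G)$ is itself $L^0$--convexly compact as the continuous module--homomorphic image of $G$, and finally invokes Theorem~\ref{Theorem 2.17} to conclude order boundedness. You instead fuse these steps into a single direct contradiction: you manufacture a nondecreasing sequence $w_n\in f(G)$ witnessing unboundedness (using that $L^0$--convexity is stable under indicator--concatenations, so finite maxima stay in $f(G)$), and then exploit that any $L^0$--convex combination of a monotone family is bounded below by its smallest term, so the subnet $\{f(y_\beta)\}$ eventually dominates every $w_n$ and the limit $f(y)$ must equal $+\infty$ on $A$. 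Your argument is essentially the necessity half of Theorem~\ref{Theorem 2.17} transplanted into the proof of Lemma~\ref{Lemma 2.19}, which buys you economy: you avoid the separate closedness proof for $f(G)$ entirely. What the paper's approach buys in return is a reusable intermediate fact---that $L^0$--convex compactness is preserved under continuous module homomorphisms into $L^0(\mathcal{F},K)$---which you do not isolate. Your handling of the complex case via $\mathrm{Re}\,f$ and $\mathrm{Im}\,f$ is fine, since the contradiction argument only uses real $L^0(\mathcal{F},R)$--linearity and $\mathcal{T}_{\varepsilon,\lambda}$--continuity, both of which the real and imaginary parts inherit.
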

\begin{proof}
First, we prove that $f(G)$ is $\mathcal{T}_{\varepsilon,\lambda}$--closed for each $f \in E^{\ast}_{\varepsilon,\lambda}$. For this, let $\xi$ belong to $\overline{f(G)}$ (the $\mathcal{T}_{\varepsilon,\lambda}$--closure of $f(G)$), then there exists a sequence $\{ x_n, n \in N \}$ in $G$ such that $\{ f(x_n), n \in N \}$ converges in $\mathcal{T}_{\varepsilon,\lambda}$ to $\xi$, namely converges in probability to $\xi$. We can, without loss of generality, assume that $\{ f(x_n), n \in N \}$ converges almost surely to $\xi$. By Proposition \ref{Proposition 2.12} there exists a subnet $\{ y_\beta, \beta \in B \}$ of $L^0$--convex combinations of $\{ x_n, n \in N \}$ such that $\{ y_\beta, \beta \in B \}$ converges to some $y \in G$. In particular, by definition, $\{ y_\beta, \beta \in B \}$ satisfies the following two items with $D: B \to Fin(N)$ as in Definition \ref{Definition 2.11}:
 \begin{enumerate}[(1)]
\item $y_\beta \in conv_{L^0} \{ x_n, n \in D(\beta) \}$ for each $\beta \in B$;
\item For each $n \in N$ there exists $\beta_n \in B$ such that $m \geq n$ for each $m \in \bigcup_{\beta' \geq \beta_n}D(\beta')$.
\end{enumerate}
Now, we prove that $\{ f(y_\beta), \beta \in B \}$ converges in $\mathcal{T}_{\varepsilon,\lambda}$ to $\xi$ as follows: since $\sup_{k \geq n} |f(x_k) - \xi |$ converges almost surely to 0 as $n \to \infty$, there exists $N(\varepsilon,\lambda) \in N$ for any given positive numbers $\varepsilon$ and $\lambda$ with $0 < \lambda < 1$ such that $\sup_{k \geq N(\varepsilon,\lambda)} |f(x_k) - \xi | \in \mathcal{U}_{\theta}(\varepsilon,\lambda)$, where $\mathcal{U}_{\theta}(\varepsilon,\lambda)$ denotes the $(\varepsilon,\lambda)$--neighborhood of $\theta$ in $L^0(\mathcal{F},K)$, then by (2) one has $y_\beta \in conv_{L^0} \{ x_n, n \geq N(\varepsilon,\lambda)\}$ for each $\beta \geq \beta_{N(\varepsilon,\lambda)}$, and thus $|f(y_\beta) - \xi | \leq \sup_{k \geq N(\varepsilon,\lambda)}|f(x_k)-\xi|$, which implies that $|f(y_\beta) - \xi| \in \mathcal{U}_{\theta}(\varepsilon,\lambda)$ for each $\beta \geq \beta_{N(\varepsilon,\lambda)}$, namely $f(y_\beta)$ converges in $\mathcal{T}_{\varepsilon,\lambda}$ to $\xi$, so that $\xi = f(y) \in f(G)$.

Then, we prove that $f(G)$ is $L^0$--convexly compact: in fact, let $\{ x_\alpha, \alpha \in \Gamma \}$ be any net in $G$, then by the $L^0$--convex compactness of $G$ there exists a subnet $\{ y_\beta, \beta \in B\}$ of $L^0$--convex combinations of $\{ x_\alpha, \alpha \in \Gamma \}$ such that $\{ y_\beta, \beta \in B\}$ converges in $\mathcal{T}_{\varepsilon,\lambda}$ to some $y \in G$, and hence $\{ f(y_\beta), \beta \in B \}$ converges in $\mathcal{T}_{\varepsilon,\lambda}$ to $f(y)$ for given $f$ in $E^{\ast}_{\varepsilon,\lambda}$. Clearly, $\{ f(y_\beta), \beta \in B \}$ is a subnet of $L^0$--convex combinations of $\{ f(x_\alpha), \alpha \in \Gamma \}$, so $f(G)$ is $L^0$--convexly compact by Proposition \ref{Proposition 2.12}.

Finally, by Theorem \ref{Theorem 2.17} $f(G)$ is almost surely bounded for any given $f \in E^{\ast}_{\varepsilon,\lambda}$, and further by Proposition \ref{Proposition 2.18} $G$ is almost surely bounded.
\end{proof}
\par
To give a most powerful characterization for a $\mathcal{T}_{\varepsilon,\lambda}$--closed $L^0$--convex subset to be $L^0$--convexly compact, namely Theorem \ref{Theorem 2.21} below, whose proof needs Lemma \ref{Lemma 2.20} below as well as a special case of Theorem \ref{Theorem 3.6} below whose proof is postponed to Section \ref{Section 3} of this paper.

Let $(E,\|\cdot\|)$ be a $\mathcal{T}_{\varepsilon,\lambda}$--complete $RN$ module over $K$ with base $(\Omega,\mathcal{F},P)$ and $1\leq p \leq +\infty$. Further, let $L^p(E) = \{ x \in E : \|x\|_p < +\infty \}$, where $\|x\|_p$ denotes the ordinary $L^p$--norm of $\|x\|$, namely $\|x\|_p = (\int_{\Omega} \|x\|^p dP)^{1/p}$ for $1\leq p < +\infty$ and $\|x\|_{\infty} = \inf\{ M \in [0,+\infty) ~|~ \|x\| \leq M \}$, then $(L^p(E),\|\cdot\|_p)$ is a Banach space over $K$. Since $(E^*, \|\cdot\|)$ is also a $\mathcal{T}_{\varepsilon,\lambda}$--complete $RN$ module, let $q$ be the H\"{o}lder conjugate number of $p$, then $(L^q(E^*),\|\cdot\|_q)$ is still a Banach space. As usual, $L^p(E)'$ denotes the classical conjugate space of $L^p(E)$.
\begin{lemma}\label{Lemma 2.20}\cite{Guo97}
Let $(E,\|\cdot\|)$ be a $\mathcal{T}_{\varepsilon,\lambda}$--complete $RN$ module over $K$ with base $(\Omega,\mathcal{F},P)$ and $p$ a positive number such that $1\leq p <+\infty$. Define the canonical mapping $T : L^q(E^*) \to L^p(E)'$ $($$T_f$ denotes $T(f)$ for each $f \in L^q(E^*)$$)$ by $T_f(x) = \int_{\Omega} f(x)dP$ for any $x \in L^p(E)$, then $T$ is an isometric isomorphism from $L^q(E^*)$ onto $L^p(E)'$.
\end{lemma}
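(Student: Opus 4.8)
The plan is to verify in turn that $T$ is a well-defined linear contraction, that it is isometric, and finally that it is surjective; the last point is by far the hardest and will occupy most of the work. First I would check well-definedness and $\|T_f\| \leq \|f\|_q$. For $f \in L^q(E^*)$ we have the pointwise estimate $|f(x)| \leq \|f\|\,\|x\|$ in $L^0_+(\mathcal{F})$ (this is exactly what $f \in E^*$ means), so for $x \in L^p(E)$ we get $|T_f(x)| = |\int_\Omega f(x)\,dP| \leq \int_\Omega \|f\|\,\|x\|\,dP \leq \|f\|_q\,\|x\|_p$ by the classical H\"older inequality applied to the ordinary random variables $\|f\|$ and $\|x\|$. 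Hence $T_f \in L^p(E)'$, $T$ is linear, and $\|T_f\| \leq \|f\|_q$.

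Second, the reverse inequality $\|T_f\| \geq \|f\|_q$. Here I would use that the $L^0$--norm on $E^*$ is given by $\|f\| = \bigvee\{|f(u)| : u \in E,\ \|u\| \leq 1\}$ and, thanks to the countable concatenation property of this value set (guaranteed by the $\mathcal{T}_{\varepsilon,\lambda}$--completeness of $E$, which lets series $\sum_n \tilde{I}_{A_n} u_n$ with $\|u_n\| \leq 1$ converge), that for every $\varepsilon \in L^0_{++}(\mathcal{F})$ there is a single $u$ with $\|u\| \leq 1$ and $|f(u)| \geq \|f\| - \varepsilon$ (in the complex case multiply $u$ by a unimodular element of $L^0(\mathcal{F},K)$ to make $f(u)$ real and nonnegative). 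For $1 < p < \infty$ I would then test $T_f$ against $x = \|f\|^{q-1} u$: since $(q-1)p = q$ one gets $\|x\|_p \leq \|f\|_q^{q/p}$ while $T_f(x) \geq \|f\|_q^q - \varepsilon \int_\Omega \|f\|^{q-1}\,dP$, and letting $\varepsilon \to 0$ yields $\|T_f\| \geq \|f\|_q$. The endpoint $p = 1$ (so $q = \infty$) is handled separately by concentrating a test vector on a set $\{\|f\| > \|f\|_\infty - \delta\}$ of positive measure. Together with the first step this shows $T$ is isometric, in particular injective.

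Third, and this is the main obstacle, surjectivity. Given $F \in L^p(E)'$ I would reconstruct $f$ by a Radon--Nikod\'ym argument. For $x$ in the submodule of bounded elements $L^\infty(E) := \{x \in E : \|x\| \in L^\infty(\mathcal{F})\}$, the set function $\mu_x(A) := F(\tilde{I}_A x)$ is a finite, countably additive $K$--valued measure (countable additivity follows from continuity of $F$ together with $\sum_n \tilde{I}_{A_n} x \to \tilde{I}_{\cup_n A_n} x$ in $L^p(E)$), hence of finite total variation and absolutely continuous with respect to $P$; I define $f(x) := d\mu_x/dP \in L^1(\mathcal{F})$. Taking $A = \Omega$ immediately gives $F(x) = \int_\Omega f(x)\,dP = T_f(x)$ on $L^\infty(E)$. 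Additivity of $\mu$ in $x$ and the identity $\mu_{\tilde{I}_B x}(A) = \mu_x(A \cap B)$ show that $f$ is $K$--linear and $\tilde{I}_B$--homogeneous, whence $L^\infty(\mathcal{F},K)$--homogeneous; I would then extend $f$ to all of $E$ by the local rule $f(x) = \sum_n \tilde{I}_{A_n} f(\tilde{I}_{A_n} x)$ along the partition $A_n = \{n-1 \leq \|x\| < n\}$, checking that the result is well defined independently of the partition and is a genuine $L^0(\mathcal{F},K)$--module homomorphism.

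It then remains to show $f \in L^q(E^*)$ with $\|f\|_q \leq \|F\|$, the quantitative heart of the argument. Writing $h = \bigvee\{|f(x)| : \|x\| \leq 1\} \in \bar{L}^0_+(\mathcal{F})$, I would for each $n$ test $F = T_f$ against the bounded vector $x_n = (h \wedge n)^{q-1} u$, with $u$ chosen as in the second step, and combine $|F(x_n)| \leq \|F\|\,\|x_n\|_p$ with the lower bound on $T_f(x_n)$ to obtain $\int_\Omega (h \wedge n)^q\,dP \leq \|F\|^q$ up to a vanishing error; monotone convergence then gives $\int_\Omega h^q\,dP \leq \|F\|^q$, so in particular $h$ is almost surely finite, $f \in E^*$ with $\|f\| = h$, and $f \in L^q(E^*)$. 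Finally, since $L^\infty(E)$ is dense in $L^p(E)$ (for $p < \infty$, via the truncations $\tilde{I}_{\{\|x\| \leq n\}} x$) and both $F$ and $T_f$ are continuous, $F = T_f$ on all of $L^p(E)$, and the isometry established above upgrades $\|f\|_q \leq \|F\|$ to $\|f\|_q = \|F\|$. The delicate points I expect to fight with are the passage from bounded elements to all of $E$ while preserving $L^0$--linearity, and justifying the norm--attainment/stability used in the test--vector estimates, both of which lean essentially on the $\mathcal{T}_{\varepsilon,\lambda}$--completeness hypothesis.
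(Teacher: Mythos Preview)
The paper does not prove Lemma~\ref{Lemma 2.20}; it is simply quoted from \cite{Guo97} as a known tool, so there is no in-paper proof to compare against. Your outline follows the natural route one would expect for such a duality theorem (and presumably the route taken in \cite{Guo97}): H\"older for the contraction, a norm-attaining test vector for the reverse inequality, and a Radon--Nikod\'ym construction for surjectivity.

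A few comments on your sketch. In Step~2 the key fact you invoke---that for every $\varepsilon \in L^0_{++}(\mathcal{F})$ there is a single $u$ in the random unit ball with $|f(u)| \geq \|f\| - \varepsilon$---is correct, but the reason is that $\{|f(u)| : \|u\| \leq 1\}$ is directed upwards (via the $L^0$-convexity of the unit ball and the stability argument $u_3 = \tilde I_A u_1 + \tilde I_{A^c} u_2$), together with Proposition~\ref{Proposition 1.3}; completeness is only needed to make the countable concatenation converge, as you note. In Step~3 your extension from $L^\infty(E)$ to $E$ by localizing along $\{n-1 \leq \|x\| < n\}$ is the right idea, but you should be explicit that the resulting $f$ is $L^0(\mathcal{F},K)$-linear (not just $L^\infty(\mathcal{F},K)$-linear): this follows because any $\xi \in L^0(\mathcal{F},K)$ is bounded on each piece of such a partition, and the local definition is compatible across partitions. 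The bound $\|f\|_q \leq \|F\|$ via truncation and monotone convergence is fine; for $p=1$ the endpoint argument you allude to also needs the directedness of $\{|f(u)| : \|u\| \leq 1\}$ to produce a near-optimizer concentrated on the level set. None of these are gaps in the strategy, just places where the write-up will need care.
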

\par
Let $(E,\|\cdot\|)$ be a $\mathcal{T}_{\varepsilon,\lambda}$--complete $RN$ module and $E^{**}$ its second random conjugate space, the canonical mapping $J: E \to E^{**}$ is defined by $J(x)(f) = f(x)$ for any $x \in E$ and $f \in E^*$, then $J$ is $L^0$--norm--preserving by the Hahn--Banach theorem for random linear functionals, if, in addition, $J$ is also surjective, then $(E,\|\cdot\|)$ is said to be random reflexive. In 1997, Guo proved in \cite{Guo97} that $(E,\|\cdot\|)$ is random reflexive if and only if $L^p(E)$ is reflexive for any given $p$ such that $1<p<+\infty$, which was further used by Guo and Li in 2005 in \cite{GL05} to prove that $(E,\|\cdot\|)$ is random reflexive if and only if each $f \in E^*$ can attain its $L^0$--norm on the random closed unit ball of $E$.
\par
Let $(B,\|\cdot\|)$ be a Banach space, the famous James' weak compactness determination theorem \cite{J64} says that a nonempty weakly closed subset $G$ of $B$ is weakly compact if and only if for each $f \in B^{\prime}$ there exists $g_0 \in G$  such that $Re(f(g_0)) = \sup \{ Re(f(g)) : g \in G \}$. Since when $G$ is convex, $G$ is weakly closed if and only if $G$ is closed, in which case the James theorem becomes : a closed convex subset $G$ of $B$ is weakly compact if and only if for each $f \in B'$ there exists $g_0 \in G$  such that $Re(f(g_0)) = \sup \{ Re(f(g)) : g \in G \}$, the special case can be generalized to a $\mathcal{T}_{\varepsilon,\lambda}$--complete $RN$ module as follows:
\begin{theorem}\label{Theorem 2.21}
Let $(E,\|\cdot\|)$ be a $\mathcal{T}_{\varepsilon,\lambda}$--complete $RN$ module over $K$ with base $(\Omega,\mathcal{F},P)$ and $G$ a $\mathcal{T}_{\varepsilon,\lambda}$--closed $L^0$--convex subset of $E$. Then $G$ is $L^0$--convexly compact if and only if for each $f \in E^*$ there exists $g_0 \in G$  such that $Re(f(g_0)) = \bigvee\{ Re(f(g)) : g \in G \}$.
\end{theorem}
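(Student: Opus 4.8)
This is an "if and only if" and I will treat the two directions quite asymmetrically. The necessity direction should follow from the machinery already in place, while the sufficiency direction is where the James theorem must enter and will be the real obstacle.

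Let me think about the necessity direction first.

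**Necessity:** Assume $G$ is $L^0$-convexly compact. Given $f \in E^*$, I want $g_0 \in G$ attaining $\bigvee\{\mathrm{Re}(f(g)) : g \in G\}$. First I need that the supremum is a genuine element of $L^0(\mathcal{F})$ rather than $\bar L^0$. By Lemma 2.19, $G$ is almost surely bounded, and by Proposition 2.18(2), $f(G)$ is almost surely bounded in $L^0(\mathcal{F},K)$, so $\mathrm{Re}(f(G))$ is bounded in order and $b := \bigvee\{\mathrm{Re}(f(g)) : g \in G\} \in L^0(\mathcal{F})$. Now I want attainment. The family $\mathrm{Re}(f(G))$ is directed upward (because $G$ is $L^0$-convex, so $f(G)$ and hence $\mathrm{Re}(f(G))$ is $L^0$-convex, hence directed upward). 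By Proposition 1.3(2) there is a nondecreasing sequence $g_n \in G$ with $\mathrm{Re}(f(g_n)) \uparrow b$. Then I apply the $L^0$-convex-compactness structure: the $(\varepsilon,\lambda)$-topology on $E$ is metrizable, so by Corollary 2.14 there is a sequence $y_n$ of forward $L^0$-convex combinations, $y_n \in \mathrm{conv}_{L^0}\{g_k : k \geq n\}$, with $y_n \to g_0 \in G$. Since $f$ is continuous and $\mathrm{Re}(f(\cdot))$ is $L^0$-affine on $L^0$-convex combinations, $\mathrm{Re}(f(y_n)) \geq \inf_{k \geq n}\mathrm{Re}(f(g_k)) = \mathrm{Re}(f(g_n))$, and passing to the limit gives $\mathrm{Re}(f(g_0)) \geq b$; the reverse inequality is the definition of $b$, so $\mathrm{Re}(f(g_0)) = b$. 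This is essentially the same argument already used inside the proof of Theorem 2.17.

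**Sufficiency** is the hard part. Assume every $f \in E^*$ attains its $\mathrm{Re}$-supremum over $G$; I must show $L^0$-convex compactness. The natural strategy is to transport the problem to a genuine Banach space via Lemma 2.20 and then invoke the classical James theorem there. Concretely, fix $p \in (1,+\infty)$ with Hölder conjugate $q$, and look at $L^p(E)$, a Banach space whose dual is $L^q(E^*)$ (Lemma 2.20). I would try to show that the attainment hypothesis on $E^*$ forces a corresponding attainment hypothesis for the closed convex hull of $G$ (suitably truncated/localized) inside $L^p(E)$, so that classical James gives weak compactness there, and then pull weak compactness back to $L^0$-convex compactness of $G$. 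The main obstacle is exactly this transfer: $G$ need not be norm-bounded in $L^p(E)$ (it is only almost surely bounded in the $L^0$ sense, and $b=\bigvee\mathrm{Re}(f(G))$ is only in $L^0(\mathcal{F})$, not $L^\infty$), so I cannot directly place $G$ inside a bounded subset of the Banach space $L^p(E)$. I expect to handle this by a countable-concatenation/localization argument: partition $\Omega$ into $\mathcal{F}$-measurable pieces $A_n$ on which the relevant $L^0$-bound $\bigvee\{\|g\| : g\in G\}$ is bounded by a constant, run the James argument on each piece where $\tilde I_{A_n}G$ sits in a norm-bounded set of a Banach space, and then glue the resulting limit points using the countable concatenation property of $E$ (Definition 2.7 and the stability results of Proposition 2.8 / Lemma 2.15). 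The delicate points will be (i) verifying that attainment of the $L^0$-norm supremum for a module homomorphism $f \in E^*$ really does translate into attainment of the ordinary real supremum for the induced bounded linear functional $T_f \in L^p(E)'$, which requires the integral representation $T_f(x)=\int_\Omega f(x)\,dP$ together with a localization argument showing the $L^0$-supremum is witnessed $\omega$-wise; and (ii) showing that weak compactness of the image in $L^p(E)$, via the characterization of $L^0$-convex compactness through subnets of $L^0$-convex combinations (Proposition 2.12), yields a net of $L^0$-convex combinations in $G$ converging in $\mathcal{T}_{\varepsilon,\lambda}$ to a point of $G$. For (ii) the key technical device is Mazur's lemma in $L^p(E)$: a weakly convergent sequence has convex combinations converging in norm, and $L^p$-norm convergence implies convergence in probability, i.e. in $\mathcal{T}_{\varepsilon,\lambda}$; these convex combinations are in particular $L^0$-convex combinations, which is precisely the object Proposition 2.12 needs.

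So the plan is: (1) dispatch necessity with Lemma 2.19, Proposition 2.18, Proposition 1.3, and Corollary 2.14 as above; (2) for sufficiency, localize $G$ over a countable $\mathcal{F}$-partition to reduce to norm-bounded pieces in the Banach space $L^p(E)$; (3) on each piece, convert the module-homomorphism attainment hypothesis into the scalar James hypothesis for functionals $T_f \in L^p(E)'$ via Lemma 2.20, and apply the classical James theorem to obtain weak compactness; (4) use Mazur's lemma to turn weak limits into $\mathcal{T}_{\varepsilon,\lambda}$-convergent $L^0$-convex combinations, and invoke Proposition 2.12 to conclude $L^0$-convex compactness on each piece; (5) reassemble over the partition using the countable concatenation property. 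I anticipate that steps (2)–(3), namely the faithful translation of the random/module attainment statement into the classical scalar attainment statement on each localized piece, will carry the bulk of the technical weight.
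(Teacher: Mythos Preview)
Your proposal is essentially correct and follows the same overall strategy as the paper, but the two proofs diverge in details that are worth noting.

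For \textbf{necessity}, the paper takes a one-line shortcut: it applies Theorem~3.6 (attainment of infima for proper stable $\mathcal{T}_c$--lower semicontinuous $L^0$--quasiconvex functions) to $\tilde f(g)=-\mathrm{Re}(f(g))$. Your argument via Corollary~2.14 and the squeeze $\mathrm{Re}(f(g_n))\le \mathrm{Re}(f(y_n))\le b$ is valid and is in effect an unrolled, self-contained version of what Theorem~3.6 would do in this special case; it has the merit of avoiding the forward reference.

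For \textbf{sufficiency}, both proofs push the problem into the Banach space $L^p(E)$ via Lemma~2.20 and invoke the classical James theorem. The difference is in how one gets $G$ to sit inside a \emph{norm-bounded} subset of $L^p(E)$. You plan a countable partition of $\Omega$ into pieces on which $\bigvee\{\|g\|:g\in G\}$ is bounded by a constant, run James on each piece, and glue via countable concatenation. This can be made to work, but it is more laborious than needed: the gluing step requires verifying that $L^0$--convex compactness is preserved under countable concatenation, and each localized James argument must be set up separately. The paper avoids all of this with a single observation: once $G$ is shown to be a.s.\ bounded (which you correctly anticipate follows from the attainment hypothesis together with Proposition~2.18), one may replace $G$ by $G/\xi$ where $\xi=\bigvee\{\|g\|:g\in G\}\in L^0_{++}(\mathcal{F})$; division by $\xi$ is a topological module automorphism, so $G$ and $G/\xi$ have the same $L^0$--convex compactness, and now $\|g\|\le 1$ for every $g\in G/\xi$. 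After this rescaling $G$ is a bounded closed convex subset of the Banach space $L^2(E)$, and---crucially---the $\|\cdot\|_2$--topology and the $(\varepsilon,\lambda)$--topology coincide on $G$ by the Lebesgue dominated convergence theorem. Hence weak compactness in $L^2(E)$ (from James) gives convex compactness in the $\|\cdot\|_2$--topology, which is the same as convex compactness in $\mathcal{T}_{\varepsilon,\lambda}$, which trivially implies $L^0$--convex compactness. This also makes your proposed Mazur step unnecessary.

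In short: your plan would succeed, but the rescaling trick $G\mapsto G/\xi$ and the observation that the two topologies agree on the resulting bounded set eliminate both the partition-and-glue machinery and the Mazur argument.
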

\begin{proof}
Necessity. Define $\tilde{f} : G \to L^0(\mathcal{F})$ by $\tilde{f}(g) = -Re(f(g))$ for any $g \in G$, it is obvious that $\tilde{f}$ is stable, $L^0$--convex and $\mathcal{T}_{c}$--semicontinuous, so $\tilde{f}$ satisfies the condition of Theorem \ref{Theorem 3.6} below of this paper.
\par
Sufficiency. First, we assert that $G$ is a.s. bounded, it only needs to verify that $\{ |f(g)| : g \in G \}$ is a.s. bounded by the resonance theorem \cite{Guo96b,Guo13} or Proposition \ref{Proposition 2.18}. In fact, since for each $f \in E^*$ there exists $g_0 \in G$ such that $Re(f(g_0)) = \bigvee\{Re(f(g)) : g \in G \}$, $\{ Re(f(g)) : g \in G \}$ is bounded above by $Ref(g_0)$ in  $(L^0(\mathcal{F}),\leq)$, similar to proof of Lemma \ref{Lemma 2.19}, one can see that $G$ is a.s. bounded.
\par
Now, we can, without loss of generality, assume that there exists $\xi \in L^0_{++}(\mathcal{F})$ such that $\|g\| \leq \xi$ for any $g \in G$, we can further assume $\xi=1$ (since otherwise, we may first consider $\widetilde{G} := G/\xi := \{ g/\xi : g \in G \}$ by noticing that $G$ and $\widetilde{G}$ have the same $L^0$--convex compactness). Then it is very easy to verify that $G$ is a bounded closed convex subset of the Banach space $(L^2(E), \|\cdot\|_2)$, in fact, the $(\varepsilon,\lambda)$--topology and the $\|\cdot\|_2$--topology coincide on $G$ by the Lebesgue dominance convergence theorem. Next, we will prove $G$ is a weakly compact subset of $L^2(E)$.
\par
Let $F$ be any given continuous linear functional on $L^2(E)$, then by Lemma \ref{Lemma 2.20} there exists a unique $f \in L^2(E^*)$ such that $F(x) = \int_{\Omega} f(x) dP$ for any $x \in L^2(E)$ (and hence $Re(F(x)) = \int_{\Omega} Re(f(x))dP$). Since there exists $g_0 \in G$ such that $Re(f(g_0)) = \bigvee \{Re(f(g)) : g \in G \}$, then it is clear that $Re(F(g_0)) = \int_{\Omega} Re(f(g_0)) dP \geq \sup \{ \int_{\Omega} Re(f(g)) dP : g \in G \} = \sup \{ Re(F(g)) : g \in G \}$. On the other hand, $\{ Re(f(g)) : g \in G \}$ is directed upwards: for any $g_1$ and $g_2$ in $G$, let $A = (Re(f(g_1)) \leq Re(f(g_2)))$ and $g_3 = \tilde{I}_{A^c} g_1 + \tilde{I}_A g_2$, then $g_3 \in G$ and $Re(f(g_3)) = Re(f(g_1)) \bigvee Re(f(g_2))$. So, by Proposition \ref{Proposition 1.3} there exists a sequence $\{ g_n, n \in N \}$ in $G$ such that $\{ Re(f(g_n)), n \in N \}$ converges a.s. to $\bigvee \{ Re(f(g)) : g \in G \} = Re(f(g_0))$ in a nondecreasing fashion, further by noticing that $|Re(f(g_n))| \leq |f(g_n)| \leq \|f\|$ for each $n \in N$ and $\int_{\Omega} \|f\| dP \leq \|f\|_2 < +\infty$, we can have that $Re(F(g_0)) = \int_{\Omega} Re(f(g_0)) dP = \lim_{n\to\infty} \int_{\Omega} Re(F(g_n)) dP \leq \sup\{ \int_{\Omega} Re(f(g)) dP : g \in G \}= \sup\{ Re(F(g)) : g \in G \}$. To sum up, $G$ is weakly compact by the classical James theorem, which is also equivalent to saying that $G$ is convexly compact, $G$ is ,of course, $L^0$--convexly compact.
\end{proof}
\begin{remark}\label{Remark 2.22}
By definition, for a $\mathcal{T}_{\varepsilon,\lambda}$--closed $L^0$--convex subset $G$ of a $\mathcal{T}_{\varepsilon,\lambda}$--complete $RN$ module $(E,\|\cdot\|)$, its convex compactness obviously implies its $L^0$--convex compactness, but the process of proof of Theorem \ref{Theorem 2.21} shows that the converse is also true by proving that
$G$ is linearly homeomorphic to a convexly compact subset $\widetilde{G}$ of the Banach space $L^2(E)$. Recently, Wu and Zhao \cite{WZ19} have extended the equivalence to the context of a complete random locally convex module.
\end{remark}

For a complex number $z\neq 0$, $arg(z)$ denotes the principal argument of $z$, we specify $arg(z) \in [0, 2\pi)$, whereas we make the convention $arg(z) = 2\pi$ when $z = 0$. Let $\xi \in L^0(\mathcal{F},K)$ with a representation $\xi^0$, then $arg(\xi^0(\cdot))$ is a real--valued random variable, if we use $arg(\xi)$ for the equivalence class of $arg(\xi^0(\cdot))$, then $\xi = |\xi| e^{iarg(\xi)}$.

\par
\begin{corollary}\label{Corollary 2.23}
Let $(E,\|\cdot\|)$ be a $\mathcal{T}_{\varepsilon,\lambda}$--complete random normed module over $K$ with base $(\Omega,\mathcal{F},P)$. Then $E$ is random reflexive iff every $\mathcal{T}_{\varepsilon,\lambda}$--closed, $L^0$--convex and almost surely bounded subsets of $E$ is $L^0$--convexly compact.
\end{corollary}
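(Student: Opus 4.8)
The plan is to prove both implications by transporting the problem into the genuine Banach space $L^2(E)$ and exploiting the two reflexivity criteria recalled just before Theorem \ref{Theorem 2.21}: Guo's theorem that $E$ is random reflexive iff $L^p(E)$ is reflexive for every $p\in(1,+\infty)$, and the Guo--Li criterion that $E$ is random reflexive iff every $f\in E^*$ attains its $L^0$--norm on the random closed unit ball $\bar U:=\{x\in E:\|x\|\leq 1\}$. The James--type Theorem \ref{Theorem 2.21} will serve as the bridge between these criteria and $L^0$--convex compactness.

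\textbf{Necessity.} Suppose $E$ is random reflexive, so that $L^2(E)$ is a reflexive Banach space. Let $G$ be a $\mathcal{T}_{\varepsilon,\lambda}$--closed, $L^0$--convex and almost surely bounded subset, say $\|g\|\leq\xi$ for all $g\in G$ with $\xi\in L^0_+(\mathcal{F})$. Replacing $\xi$ by $\xi+1\in L^0_{++}(\mathcal{F})$ and passing to $\widetilde G:=\{g/(\xi+1):g\in G\}$, which is $\mathcal{T}_{\varepsilon,\lambda}$--closed and has the same $L^0$--convex compactness as $G$ (multiplication by a strictly positive element of $L^0(\mathcal{F})$ is a homeomorphism, exactly as in the proof of Theorem \ref{Theorem 2.21}), I may assume $\|g\|\leq 1$ on $G$. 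As observed in the proof of Theorem \ref{Theorem 2.21}, on such a bounded set the $(\varepsilon,\lambda)$--topology and the $\|\cdot\|_2$--topology coincide by Lebesgue dominated convergence, so $G$ is a bounded, $\|\cdot\|_2$--closed, convex subset of $L^2(E)$, hence weakly compact by reflexivity. Then any family of $\mathcal{T}_{\varepsilon,\lambda}$--closed $L^0$--convex subsets of $G$ consists of $\|\cdot\|_2$--closed convex, hence weakly closed, subsets of the weakly compact set $G$; so the finite intersection property forces a nonempty intersection, which is precisely $L^0$--convex compactness of $G$.

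\textbf{Sufficiency.} Assume the stated compactness property. The random closed unit ball $\bar U$ is $\mathcal{T}_{\varepsilon,\lambda}$--closed, $L^0$--convex and almost surely bounded (by $1$), hence $L^0$--convexly compact by hypothesis. Fix $f\in E^*$. By Theorem \ref{Theorem 2.21} there is $g_0\in\bar U$ with $Re(f(g_0))=\bigvee\{Re(f(g)):g\in\bar U\}$, and I claim this supremum equals $\|f\|$. The bound $Re(f(g))\leq|f(g)|\leq\|f\|\,\|g\|\leq\|f\|$ gives one inequality; for the reverse, given $x\in\bar U$ set $\zeta=e^{-i\,arg(f(x))}\in L^0(\mathcal{F},K)$ using the $arg$ convention introduced just above, so that $|\zeta|=1$, $\zeta x\in\bar U$ and $f(\zeta x)=\zeta f(x)=|f(x)|\geq 0$, whence $\bigvee\{Re(f(g)):g\in\bar U\}\geq|f(x)|$. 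Taking the supremum over $x\in\bar U$ and invoking the standard identity $\bigvee\{|f(x)|:x\in\bar U\}=\|f\|$ for the $L^0$--norm of $E^*$ completes the claim. Consequently $\|f\|=Re(f(g_0))\leq|f(g_0)|\leq\|f\|$, which forces $|f(g_0)|=\|f\|$ with $\|g_0\|\leq 1$; thus every $f\in E^*$ attains its $L^0$--norm on $\bar U$, and $E$ is random reflexive by the Guo--Li criterion.

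\textbf{Main obstacle.} The genuinely load--bearing step is the transfer in the necessity part: one must reduce a general almost surely bounded set to unit--ball scale and then verify that $\mathcal{T}_{\varepsilon,\lambda}$--closed $L^0$--convex subsets become honestly weakly closed convex subsets of $L^2(E)$, since only then does the weak compactness of $G$ (via the finite intersection property) deliver $L^0$--convex compactness; the coincidence of the two topologies on bounded sets is exactly what licenses this. In the converse direction the delicate point is the $arg$--normalization identity that equates the James--type supremum $\bigvee\{Re(f(g)):g\in\bar U\}$ with the $L^0$--norm $\|f\|$, as this is what converts Theorem \ref{Theorem 2.21} applied to $\bar U$ into norm attainment, and hence into random reflexivity.
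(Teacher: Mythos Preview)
Your proof is correct and follows essentially the same route as the paper's own argument: both directions transfer the problem to the Banach space $L^2(E)$ and invoke, respectively, the reflexivity of $L^2(E)$ (for necessity) and the Guo--Li norm-attainment criterion together with the $arg$--normalization trick (for sufficiency, via the necessity direction of Theorem \ref{Theorem 2.21}). Your write-up is slightly more explicit than the paper's in spelling out the finite-intersection step and the chain of inequalities $\|f\|=Re(f(g_0))\leq|f(g_0)|\leq\|f\|$, but the underlying ideas are identical.
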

\begin{proof}
(1). Necessity. Since $E$ is random reflexive, it follows from \cite{Guo97,GL05} that $(L^2(E),\|\cdot\|_2)$ is a reflexive Banach space, where $L^2(E) = \{ x \in E ~|~ \int_{\Omega} \|x\|^{2} dP < +\infty \}$ and $\|x\|_{2} = (\int_{\Omega} \|x\|^{2}dP)^{\frac{1}{2}}$ for all $x \in L^2(E)$. Now, let $G$ be a $\mathcal{T}_{\varepsilon,\lambda}$--closed, $L^0$--convex and almost surely bounded subset of $E$ and further let $\xi \in L^0_{++}(\mathcal{F})$ such that $\|g\| \leq \xi$ for all $g \in G$. We can, without loss of generality, suppose that $\xi = 1$ (otherwise, we can consider $\frac{1}{\xi}G$ in the place of $G$). Then $G$ is a closed convex subset of the closed unit ball $\{ x \in L^2(E) ~|~ \|x\|_2 \leq 1 \}$, and hence a weakly compact set of $L^2(E)$, which, of course, implies that $G$ is $L^0$--convexly compact.

(2). Sufficiency. Let $U(1) = \{ x \in E : \|x\| \leq 1 \}$, then for each $f \in E^*$ there exists some $g_0 \in U(1)$ such that $Re(f(g_0)) = \bigvee \{ Re(f(g)) : g \in U(1)\}$. Since for each $x \in E$, $f(x) = |f(x)| e^{iarg(f(x))}$, then $|f(x)| =f(x) \cdot e^{-iarg(f(x))} = f(e^{-iarg(f(x))} \cdot x) = Re(f(e^{-iarg(f(x))} \cdot x))$, from which one can easily see that $\bigvee \{ Re(f(g)) : g \in U(1) \} = \bigvee \{ |f(g)| : g \in U(1) \}$, it is , of course, that $Re(f(g_0)) = |f(g_0)|$, namely $f(g_0) = Re(f(g_0))$. To sum up, we have that $f(g_0)= \bigvee \{ |f(g)| : g \in U(1) \} = \|f\|$.¡£ It follows from \cite[Theorem 3.1]{GL05} that $E$ is random reflexive.
\end{proof}

For any positive integer $d$, let $L^0(\mathcal{F},K^d)$ be the $L^0(\mathcal{F},K)$--module of equivalence classes of $K^d$--valued random vectors on $(\Omega,\mathcal{F},P)$, where $K^d$ is the Cartesian product of $K$ by $d$ times, it is a free $L^0(\mathcal{F},K)$--module of rank $d$. As is well known, any finitely generated subspace of a linear space over $K$ must be of finite dimension, for example, simple like some $K^d$. However, a finitely generated submodule of an $L^0(\mathcal{F},K)$--module is rather different from a free $L^0(\mathcal{F},K)$--module of finite rank, whose structure was already characterized in \cite[Theorem 1.1]{GS11}. Since we are often forced to work with finitely generated $L^0(\mathcal{F},K)$--modules rather than free $L^0(\mathcal{F},K)$--modules of finite rank, (see Section \ref{Section 4} of this paper), we restate it for the sake of convenience.
\begin{proposition}\label{Proposition 2.24}  \cite[Theorem 1.1]{GS11}
Let $E$ be a finitely generated $L^0(\mathcal{F},K)$--module, e.g., let $E = span_{L^0} \{ p_1, p_2,...,p_n \} := \{ \sum^{n}_{i = 1} \xi_{i}p_{i} ~|~ \xi_1, \xi_2,...,\xi_n \in L^0(\mathcal{F},K) \}$ for $n$--fixed elements $p_1,p_2,...,p_n \in E$. Then there exists a finite partition $\{ A_0,A_1,...,A_n \}$ of $\Omega$ to $\mathcal{F}$ such that $\tilde{I}_{A_i} E$ is a free module of rank $i$ over the algebra $\tilde{I}_{A_i} L^0(\mathcal{F},K)$ for any $i \in \{ 0,1,2,...,n \}$ such that $P(A_i) > 0$, in which case $E$ has the direct sum decomposition as $\bigoplus^{n}_{i=0} \tilde{I}_{A_i} E$ and each such $A_i$ is unique up to the almost sure equality.
\end{proposition}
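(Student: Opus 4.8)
The plan is to prove the statement by a measurable rank argument: on each part of $\Omega$ I will extract a maximal locally $L^0$--linearly independent subset of the generators $p_1,\dots,p_n$ and show that it is a local basis. Throughout, for $q_1,\dots,q_k\in E$ and $A\in\mathcal F$, call $\{q_1,\dots,q_k\}$ \emph{locally $L^0$--linearly independent on $A$} if $\tilde I_A\sum_{l=1}^k\xi_lq_l=\theta$ forces $\tilde I_A\xi_l=0$ for every $l$. The entire argument rests on the countable concatenation property of $L^0(\mathcal F,K)$ together with Proposition \ref{Proposition 1.3}, which let me attain essential suprema of directed families of $\mathcal F$--sets along increasing sequences and glue locally chosen scalars or module elements into genuine elements of $L^0(\mathcal F,K)$ or $E$.

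First I would set $B_0=\Omega$ and, for $1\le k\le n$, let $B_k$ be the essential supremum of all $A\in\mathcal F$ on which some $k$--element subset of $\{p_1,\dots,p_n\}$ is locally $L^0$--linearly independent. Since dropping a generator preserves local independence, $\Omega=B_0\supseteq B_1\supseteq\cdots\supseteq B_n$, and I define the partition by $A_i=B_i\setminus B_{i+1}$ for $0\le i\le n-1$ and $A_n=B_n$. By construction $A_i$ is the essentially largest locus on which one can find $i$ locally independent generators but no $i+1$: indeed $A_i\cap B_{i+1}=\emptyset$, so no positive--measure subset of $A_i$ carries an $(i+1)$--element locally independent subset, for such a subset would lie in $B_{i+1}$.

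The substantive step is to show $\tilde I_{A_i}E$ is free of rank $i$ over $\tilde I_{A_i}L^0(\mathcal F,K)$. Because there are only finitely many $i$--subsets $S\subseteq\{1,\dots,n\}$, the essential supremum defining $B_i$ is a finite essential union, so I can partition $A_i$ into finitely many pieces on each of which a fixed $\{p_j:j\in S\}$ with $|S|=i$ is locally independent; concatenating the corresponding generators over these pieces yields $e_1,\dots,e_i\in E$ that are locally $L^0$--linearly independent on all of $A_i$ (a relation restricts to each piece, where it forces the coefficients to vanish, and the vanishings concatenate). Freeness is then the absence of nontrivial relations, which is exactly this independence; it remains to prove that $e_1,\dots,e_i$ span, i.e. that $\tilde I_{A_i}p_j\in\mathrm{span}_{L^0}\{e_1,\dots,e_i\}$ over $\tilde I_{A_i}L^0(\mathcal F,K)$ for each $j$. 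To this end I would let $C$ be the essential supremum of the sets $A\subseteq A_i$ on which $\tilde I_Ap_j$ lies in that span (concatenating the witnessing coefficient vectors shows $\tilde I_Cp_j$ itself lies in the span), and argue $C=A_i$: if $D:=A_i\setminus C$ had positive measure, then for any relation $\tilde I_D(\sum_l\xi_le_l+\eta p_j)=\theta$ the set $(\eta\ne0)\cap D$ must be null, since on it one could invert $\eta$ and exhibit $p_j$ in the span, contradicting $D\cap C=\emptyset$; hence $\eta=0$ on $D$ and local independence of $e_1,\dots,e_i$ forces all $\xi_l=0$ on $D$, so $\{e_1,\dots,e_i,p_j\}$ is locally independent on $D$, placing $D$ inside $B_{i+1}$ and contradicting $D\subseteq A_i\subseteq B_{i+1}^{\,c}$. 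Thus every $p_j$, and hence all of $\tilde I_{A_i}E$, lies in the span. Finally $E=\bigoplus_{i=0}^n\tilde I_{A_i}E$ is immediate from $\sum_{i=0}^n\tilde I_{A_i}=1$ and $\tilde I_{A_i}\tilde I_{A_{i'}}=0$ for $i\ne i'$.

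For uniqueness I would use that $\tilde I_AL^0(\mathcal F,K)$ is a nonzero commutative ring whenever $P(A)>0$ and therefore has invariant basis number: if two partitions $\{A_i\}$ and $\{A_i'\}$ both satisfy the conclusion and $P(A_i\cap A_j')>0$ for some $i\ne j$, then $\tilde I_{A_i\cap A_j'}E$ would be simultaneously free of rank $i$ and of rank $j$ over one nonzero commutative ring, which is impossible; hence $A_i=A_i'$ up to null sets. I expect the spanning/exchange step of the previous paragraph to be the main obstacle, because it is the only place where one must pass from ``$p_j$ is nowhere a local combination of $e_1,\dots,e_i$ on $D$'' to ``$\{e_1,\dots,e_i,p_j\}$ is locally independent on $D$'' and then invoke maximality of $i$; getting this clean requires controlling the essential suprema defining $B_{i+1}$ and $C$, exploiting that a scalar in $L^0(\mathcal F,K)$ is invertible precisely off its null set, and using countable concatenation to assemble both the basis $e_1,\dots,e_i$ and the witnessing coefficients. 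The finiteness of $\{1,\dots,n\}$ is what keeps all of this gluing finite and hence unproblematic.
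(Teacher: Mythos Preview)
The paper does not prove Proposition~\ref{Proposition 2.24}; it is quoted verbatim from \cite[Theorem~1.1]{GS11} and used as a black box (for Corollary~\ref{Corollary 2.25}). So there is no ``paper's own proof'' to compare against, and your proposal has to be judged on its own merits.

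Your argument is essentially correct and is the natural measurable-rank approach one would expect in \cite{GS11}. The definitions of $B_k$ and $A_i$ are sound; the family of sets on which a fixed subset $\{p_j:j\in S\}$ is locally independent is closed under finite unions (restrict a putative relation to each piece), so by Proposition~\ref{Proposition 1.3} each $B_i^S$ is attained and $B_i$ is a finite essential union of the $B_i^S$. The construction of $e_1,\dots,e_i$ by concatenation and the verification of local independence on $A_i$ are fine, as is the spanning argument via the maximal set $C$ and the invariant basis number argument for uniqueness.

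There is one genuine, though easily repaired, gap in the spanning step. You conclude that $\{e_1,\dots,e_i,p_j\}$ is locally independent on $D$ and then claim $D\subseteq B_{i+1}$. But $B_{i+1}$ was defined using $(i{+}1)$-element subsets of $\{p_1,\dots,p_n\}$, whereas your $e_1,\dots,e_i$ are concatenations of different $p_l$'s over different pieces of $A_i$ and hence are not themselves among the $p_l$'s. To close the gap, decompose $D$ along the finite partition you already used to build the $e_l$'s: on each piece, $e_l=p_{j_l}$ for a fixed $S=\{j_1,\dots,j_i\}$, and necessarily $j\notin S$ on that piece (otherwise $p_j$ would lie in the span there, contradicting $D\cap C=\emptyset$), so $\{p_{j_1},\dots,p_{j_i},p_j\}$ is a genuine $(i{+}1)$-element subset of the generators that is locally independent on that piece of $D$, placing it in $B_{i+1}$. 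Since this covers all of $D$, you get $D\subseteq B_{i+1}$ and the desired contradiction.
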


\begin{corollary}\label{Corollary 2.25}
Let $(E,\mathcal{P})$ be a finitely generated random locally convex module over $K$ with base $(\Omega, \mathcal{F},P)$, for example, let $E = \bigoplus^{n}_{i=0} \tilde{I}_{A_i}E$ be the same as in Proposition \ref{Proposition 2.24} $($we can, without loss of generality, assume $P(A_i) > 0$ for each $i$ with $0 \leq i \leq n)$. Then $(E,\mathcal{T}_{\varepsilon,\lambda})$ is isomorphic onto a closed submodule of $(L^0(\mathcal{F},K^n),\mathcal{T}_{\varepsilon,\lambda})$ in the sense of topological modules. In particular, any $\mathcal{T}_{\varepsilon,\lambda}$--closed, almost surely bounded and $L^0$--convex subset $G$ of $E$ must be $L^0$--convexly compact. Here, $L^0(\mathcal{F},K^n)$ is endowed with the $L^0$--inner product $\langle \xi, \eta \rangle = \sum^{n}_{i=1} \xi_{i} \overline{\eta}_i$ for any $\xi = (\xi_1,\xi_2,...,\xi_n)^{T}$ and $\eta = (\eta_1,\eta_2,...,\eta_n)^T \in L^0(\mathcal{F},K^n)$, where the symbol $T$ stands for the transpose operation of a vector.
\end{corollary}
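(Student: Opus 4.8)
The plan is to realize $E$ concretely as a closed submodule of $L^0(\mathcal{F},K^n)$ and then import the convex-compactness statement from the ambient complete module through Corollary~\ref{Corollary 2.23}. First I would use the decomposition $E = \bigoplus_{i=0}^{n} \tilde{I}_{A_i}E$ furnished by Proposition~\ref{Proposition 2.24}: on each stratum $A_i$ with $P(A_i)>0$ the module $\tilde{I}_{A_i}E$ is free of rank $i$ over $\tilde{I}_{A_i}L^0(\mathcal{F},K)$, so I may fix an $L^0$-basis $\{e_1^{(i)},\dots,e_i^{(i)}\}$ and send $\sum_{j=1}^{i}\xi_j e_j^{(i)}$ to $(\xi_1,\dots,\xi_i,0,\dots,0)$, padding with the last $n-i$ coordinates. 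Patching these maps along the finite partition $\{A_0,\dots,A_n\}$ produces an injective $L^0(\mathcal{F},K)$-module homomorphism $T:E\to L^0(\mathcal{F},K^n)$ whose range is the submodule carried by the first $i$ coordinates on each $A_i$.

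The decisive step is to show that $T$ is a homeomorphism onto its image for the $(\varepsilon,\lambda)$-topologies and that this image is $\mathcal{T}_{\varepsilon,\lambda}$-closed. Here I would invoke the random analogue of the finite-dimensionality principle contained in the structure theory of finitely generated $L^0$-modules (cf. \cite{GS11}): on a free $L^0(\mathcal{F},K)$-module of finite rank every Hausdorff topology making it a topological module over $(L^0(\mathcal{F},K),\mathcal{T}_{\varepsilon,\lambda})$ coincides with the standard one, equivalently all $L^0$-seminorms are equivalent up to $L^0$-multipliers and all coordinate functionals are continuous. Applying this on each stratum and assembling shows that $T$ and $T^{-1}$ are continuous, so $T$ is a topological isomorphism. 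Since a free module of finite rank is $\mathcal{T}_{\varepsilon,\lambda}$-complete (being topologically a finite product of copies of $L^0(\mathcal{F})$) and finite direct sums preserve completeness, $E$ is itself complete; hence $T(E)$ is complete and therefore closed in the complete Hausdorff module $L^0(\mathcal{F},K^n)$. This settles the first assertion.

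For the second assertion I would equip $L^0(\mathcal{F},K^n)$ with the stated $L^0$-inner product, making it a $\mathcal{T}_{\varepsilon,\lambda}$-complete $RIP$-module; by the random Riesz representation theorem such a module is self-conjugate and hence random reflexive. Given a $\mathcal{T}_{\varepsilon,\lambda}$-closed, almost surely bounded, $L^0$-convex set $G\subset E$, its image $T(G)$ inherits all three properties: it is $L^0$-convex because $T$ is $L^0$-linear, almost surely bounded because almost sure boundedness is intrinsic to the module topology on a finite free module and $T$ is a topological isomorphism, and $\mathcal{T}_{\varepsilon,\lambda}$-closed because $T(E)$ is closed and $T$ is a homeomorphism onto $T(E)$. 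Corollary~\ref{Corollary 2.23} then yields that $T(G)$ is $L^0$-convexly compact in $L^0(\mathcal{F},K^n)$. Finally, as $T$ is a topological module isomorphism onto a closed submodule, it carries closed $L^0$-convex subsets to closed $L^0$-convex subsets and preserves both the finite intersection property and nonempty intersections, so $L^0$-convex compactness descends back to $G$.

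The main obstacle is the topological step: establishing that $T$ is a homeomorphism, that is, the random finite-dimensionality fact that a finite free $L^0$-module admits a unique Hausdorff module topology, together with the measurable bookkeeping needed to patch the strata $A_i$ coherently (checking in particular that continuity and completeness, verified on each $\tilde{I}_{A_i}E$, assemble correctly across the partition). Once the embedding is seen to be a topological isomorphism onto a closed submodule and the reflexivity of $L^0(\mathcal{F},K^n)$ is recorded, the transfer of convex compactness through Corollary~\ref{Corollary 2.23} is routine.
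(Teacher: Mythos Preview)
Your proposal is correct and follows essentially the same route as the paper: realize $E$ inside $L^0(\mathcal{F},K^n)$ via the rank decomposition, cite the finite-rank uniqueness of module topologies for the homeomorphism (the paper invokes \cite[Lemma~3.4]{GP01} rather than \cite{GS11}), observe that $L^0(\mathcal{F},K^n)$ is random reflexive, and then apply Corollary~\ref{Corollary 2.23}. One small point worth tightening: almost sure boundedness is defined through the $L^0$--seminorms and is not \emph{a priori} a purely topological invariant, so the paper instead passes from a.s.\ boundedness of $G$ to $\mathcal{T}_{\varepsilon,\lambda}$--boundedness (which \emph{is} preserved by the homeomorphism $T$) and then uses that an $L^0$--convex, $\mathcal{T}_{\varepsilon,\lambda}$--bounded subset of $L^0(\mathcal{F},K^n)$ is automatically a.s.\ bounded.
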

\begin{proof}
By Lemma 3.4 of \cite{GP01}, each $\tilde{I}_{A_i} \cdot E$ is isomorphic onto $\tilde{I}_{A_i} L^0(\mathcal{F},K^i)$ in the sense of a topological module for each $i$ with $1 \leq i \leq n$ (we can omit $\tilde{I}_{A_0}E$ since it is $\{ \theta \}$). Since $L^0(\mathcal{F},K^i)$ can be identified with $\{ \xi \in L^0(\mathcal{F},K^n) ~|~ \xi_k = 0$ when $k \geq i+1 \}$ for each $i$ such that $1 \leq i \leq n-1$, $E$ is isomorphic onto $\sum^{n}_{i=1} \tilde{I}_{A_i} L^0(\mathcal{F},K^i)$, a closed submodule of $(L^0(\mathcal{F},K^n),\mathcal{T}_{\varepsilon,\lambda})$.

Denote by $J$ the above isomorphism from $E$ onto $\sum^{n}_{i=1} \tilde{I}_{A_i} L^0(\mathcal{F},K^i)$, then $J(E)$ is random reflexive since $L^0(\mathcal{F},K^n)$ is random reflexive. Further, since $G$ is almost surely bounded, it is, of course, $\mathcal{T}_{\varepsilon,\lambda}$--bounded, then $J(G)$ is also $\mathcal{T}_{\varepsilon,\lambda}$--bounded, which further implies that $J(G)$ is almost surely bounded since $J(G)$ is $L^0$--convex. To sum up, $J(G)$ is $L^0$--convexly compact by Corollary \ref{Corollary 2.23} since it is $\mathcal{T}_{\varepsilon,\lambda}$--closed, almost surely bounded and $L^0$--convex, this means $G$ is $L^0$--convexly compact, too.
\end{proof}

The randomized Bolzano--Weierstrass theorem (see, e.g.\cite{Yan04}) can be stated as follows: for each a.s. bounded sequence $\{x_n : n \in N \}$ in $L^0(\mathcal{F},R^n)$ there exists a sequence $\{ n_k : k \in N \}$ of positive integer--valued random variables such that $n_k(\omega) < n_{k+1}(\omega)$ for each $\omega \in \Omega$ and each $k \in N$ such that $\lim_{k \to \infty} n_{k}(\omega) = +\infty$  for each $\omega \in \Omega$ and $\{ x_{n_k} : k \in N \}$ converges a.s.to some element $y$ in $L^0(\mathcal{F},R^n)$, where  $x_{n_k} = \sum^{\infty}_{l=1} \tilde{I}_{A_{k,l}} \cdot x_l$ for each $k \in N$ with $A_{k,l} = \{ \omega \in \Omega : n_k(\omega) = l \}$ for each $l \in N$. Corollary \ref{Corollary 2.25} tells us that each a.s. bounded $\mathcal{T}_{\varepsilon,\lambda}$--closed $L^0$--convex subset of $L^0(\mathcal{F},R^n)$ is $L^0$--convexly compact. Proposition \ref{Proposition 2.26} below shows that the randomized Bolzano--Weierstrass theorem implies the result of $L^0$--convex compactness.

\begin{proposition}\label{Proposition 2.26}
 The randomized Bolzano--Weierstrass theorem implies that each a.s. bounded $\mathcal{T}_{\varepsilon,\lambda}$--closed $L^0$--convex subset $G$ of $L^0(\mathcal{F},R^n)$ is $L^0$--convexly compact.
\end{proposition}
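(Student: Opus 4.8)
The plan is to verify $L^0$--convex compactness using the randomized Bolzano--Weierstrass theorem as the sole engine, after first recording two structural facts about $G$. First I would show that a $\mathcal{T}_{\varepsilon,\lambda}$--closed $L^0$--convex subset $G$ of $L^0(\mathcal{F},R^n)$ is automatically stable, i.e. has the countable concatenation property: for $x,y\in G$ and $A\in\mathcal{F}$ one has $\tilde{I}_A x+(1-\tilde{I}_A)y=\tilde{I}_A x+\tilde{I}_{A^c}y\in G$ by $L^0$--convexity (take $\xi=\tilde{I}_A$), so every finite concatenation of members of $G$ lies in $G$; a countable concatenation $\sum_{l\geq 1}\tilde{I}_{A_l}x_l$ is then the $\mathcal{T}_{\varepsilon,\lambda}$--limit of the finite concatenations $\sum_{l=1}^{m}\tilde{I}_{A_l}x_l+\tilde{I}_{\cup_{l>m}A_l}x_1$ and hence lies in $G$ by closedness. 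I would also recall that $(L^0(\mathcal{F},R^n),\mathcal{T}_{\varepsilon,\lambda})$ is metrizable.

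The core of the argument is the extraction step. Given any sequence $\{x_m, m\in N\}$ in the a.s. bounded set $G$, the randomized Bolzano--Weierstrass theorem produces positive integer--valued random variables $n_k$ with $n_k(\omega)<n_{k+1}(\omega)$ and $n_k(\omega)\to+\infty$ such that $x_{n_k}=\sum_{l}\tilde{I}_{A_{k,l}}x_l$, where $A_{k,l}=\{n_k=l\}$, converges a.s., hence in probability, to some $y\in L^0(\mathcal{F},R^n)$. Since the $n_k$ are strictly increasing positive integers pointwise, $n_k(\omega)\geq k$, so $A_{k,l}$ is null for $l<k$ and $x_{n_k}=\sum_{l\geq k}\tilde{I}_{A_{k,l}}x_l$ is a countable concatenation of the tail $\{x_l:l\geq k\}$. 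By stability $x_{n_k}\in G$, and being a $\mathcal{T}_{\varepsilon,\lambda}$--limit of finite concatenations of the tail---each of which is an $L^0$--convex combination with coefficients $\tilde{I}_{A_{k,l}}\in L^0_+(\mathcal{F})$ summing to $1$---we get $x_{n_k}\in\overline{conv_{L^0}\{x_l:l\geq k\}}$. Finally $y\in G$ by closedness. Thus $\{x_{n_k}\}$ is a sequence of forward $L^0$--convex combinations of $\{x_m\}$ converging in $G$, which is exactly the converse of the conclusion of Corollary \ref{Corollary 2.14}.

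The step I expect to be the main obstacle is upgrading this sequential conclusion to the full, finite--intersection--property definition of $L^0$--convex compactness, because the randomized Bolzano--Weierstrass theorem is inherently sequential while Definition \ref{Definition 2.9} quantifies over arbitrary, possibly uncountable, families of closed $L^0$--convex subsets. My plan to bridge this gap is: given such a family $\{G_i,i\in I\}$ with the finite intersection property, pass to the downward directed family of finite intersections $G_F$ ($F\in Fin(I)$), each nonempty, $\mathcal{T}_{\varepsilon,\lambda}$--closed, $L^0$--convex and a.s. bounded, and reduce it to a countable chain $F_1\subseteq F_2\subseteq\cdots$ realizing the supremum of the nondecreasing net of $L^0$--valued infima $\bigwedge\{\|x\|:x\in G_F\}$, obtained by applying the conditional order completeness of $L^0(\mathcal{F})$ (Proposition \ref{Proposition 1.3}) as in the sup/inf reduction in the proof of Theorem \ref{Theorem 2.17}. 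Along this countable chain the sequential extraction above would yield a common point $y$, after which a final verification---that $y$ lies in every $G_F$, not merely those in the chain---completes the argument. Confirming that the countable reduction genuinely captures the whole intersection is the delicate point, and the place where the order structure of $L^0(\mathcal{F})$, rather than randomized Bolzano--Weierstrass alone, must be brought in.
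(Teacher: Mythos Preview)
Your extraction step differs from the paper's in a crucial way and contains an internal inconsistency. You correctly note that the countable concatenation $x_{n_k}=\sum_{l\ge k}\tilde I_{A_{k,l}}x_l$ lies only in $\overline{conv_{L^0}\{x_l:l\ge k\}}$, yet in the very next sentence you call $\{x_{n_k}\}$ ``a sequence of forward $L^0$--convex combinations''---it is not, since Definition~\ref{Definition 2.11} and Corollary~\ref{Corollary 2.14} require membership in $conv_{L^0}$ of a \emph{finite} tail set, not its closure. The paper closes this gap with a truncation trick: first translate so that $\theta\in G$, then for each $k$ choose $m_k\in N$ with $\sum_{l>m_k}P(n_k=l)<\tfrac{1}{k}$ and set $y_k=\sum_{l=k+1}^{m_k}\tilde I_{A_{k,l}}x_l$. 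Because $\theta\in G$ absorbs the leftover mass $\tilde I_{\{n_k>m_k\}}$, each $y_k$ is a genuine finite $L^0$--convex combination of $\{x_l:l>k\}$, hence $y_k\in conv_{L^0}\{x_l:l>k\}\cap G$; and $y_k\to y$ in probability because $y_k$ and $x_{n_k}$ agree off a set of probability less than $\tfrac{1}{k}$ on which both are a.s.\ bounded. This truncation is the idea your argument is missing.

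On the net--versus--sequence issue you flag as the main obstacle: the paper simply does not engage with it. Having produced a sequence $\{y_k\}$ of forward $L^0$--convex combinations converging in $G$, the paper invokes Proposition~\ref{Proposition 2.12} and stops. Your proposed countable reduction---selecting a chain $F_1\subset F_2\subset\cdots$ that realizes $\bigvee_{F}\bigwedge\{\|x\|:x\in G_F\}$---does not appear to work as stated: knowing only that a single $L^0$--valued infimal norm is maximized along the chain gives no control over the shape or location of the sets $G_F$ for $F$ outside the chain, so there is no evident reason the extracted limit $y\in\bigcap_m G_{F_m}$ should lie in every $G_F$. You are right that this is the delicate point; it is also the point at which your plan, as written, breaks down rather than merely requiring care.
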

\begin{proof}
For any given sequence $\{x_n : n \in N \}$ in $G$, we can, without loss of generality, assume that $\theta \in G$. Let  $\{ n_k : k \in N \}$ and $\{ x_{n_k} : k \in N \}$ be as above, we can also, without loss of generality, suppose that $n_k(\omega) >k$ for each $k \in N$ and $\omega \in \Omega$. It is clear that we can select a sufficiently great $m_k \in N$ for each $k \in N$ such that $\sum_{l > m_k} P(n_k =l) < \frac{1}{k}$, where $(n_k =l) = \{ \omega \in \Omega : n_k(\omega) = l \}$ for each positive integer $l > m_k$. Further, let $y_k = \sum^{m_k}_{l=k+1} \tilde{I}_{A_{k,l}} \cdot x_l$ for each $k \in N$, where $A_{k,l} = \{ \omega \in \Omega : n_k(\omega) = l \}$ for each $l$ such that $k+1 \leq l \leq m_k$, then it is also obvious that $y_k \in conv_{L^0}\{ x_l : l > k \}$ and $y_k \in G$ for each $k \in N$ satisfying that $\{ y_k : k \in N \}$ converges in probability (namely, in $\mathcal{T}_{\varepsilon,\lambda}$) to $y$, which shows that $G$ is $L^0$--convexly compact by Proposition \ref{Proposition 2.12}.

\end{proof}
\section{Attainment of infima and Minty type variational inequalities for $L^0$--convex functions}\label{Section 3}
\par
In this section, $(E,\mathcal{P})$ always denotes a given random locally convex module over the real number field $R$ with base $(\Omega,\mathcal{F},P)$ and $G$ is a $\mathcal{T}_{\varepsilon,\lambda}$--closed $L^0$--convex subset of $E$.
\par
The main results of this section are Lemma \ref{Lemma 3.3}, Theorem \ref{Theorem 3.5}, Theorem \ref{Theorem 3.6}, Theorem \ref{Theorem 3.8}, Theorem \ref{Theorem 3.15}, Theorem \ref{Theorem 3.16} and Corollary \ref{Corollary 3.17}, let us first recapitulate some known terminology for the statement and proof of them.
\par
$L^0$--convex and $L^0$--quasiconvex functions defined on the whole space were already studied in \cite{FKV09,GZZ15a,GZZ15b,GZW17,FM14a,FM14b}, Definition \ref{Definition 3.1} below will be convenient for us in this paper.

\begin{definition}\label{Definition 3.1}
A mapping $f: G\to \bar{L}^0(\mathcal{F})$ is said to be
\begin{enumerate}[(1)]
\item an $L^0$--convex function if $f(\xi x+(1-\xi)y)\leq\xi f(x)+(1-\xi)f(y)$ for all $x$ and $y\in G$ and $\xi \in L^0_+(\mathcal{F})$ such that $0\leq\xi\leq 1$, where we adopt the convention that $0\cdot(\pm\infty)=0, +\infty\pm(-\infty)=+\infty$.
\item a local function if $\theta$ $($ the null in $E$ $)$ $\in G$ and $\tilde{I}_Af(x)=\tilde{I}_Af(\tilde{I}_Ax)$ for all $x\in G$ and all $A\in \mathcal{F}$.
\item stable or regular if $f(\tilde{I}_Ax+\tilde{I}_{A^c}y)=\tilde{I}_Af(x)+\tilde{I}_{A^c}f(y)$ for all $x$ and $y\in G$ and all $A\in \mathcal{F}$, where $A^c=\Omega\setminus A$, namely the complement of $A$.
\item $\sigma$--stable or countably regular if $G$ has the countable concatenation property and $f(\sum^{\infty}_{n=1}\tilde{I}_{A_n}x_n)=\sum^{\infty}_{n=1}\tilde{I}_{A_n}f(x_n)$ for all sequences $\{x_n\}^{\infty}_{n=1}$ in $G$ and all countable partitions $\{A_n\}^{\infty}_{n=1}$ of $\Omega$ to $\mathcal{F}$.
\item proper if $f(x)>-\infty$ on $\Omega$ for all $x\in G$ and there exists some $x\in G$ such that $f(x)\in L^0(\mathcal{F})$.
\item $\mathcal{T}_{\varepsilon,\lambda}$--lower semicontinuous if $f$ is proper and epi$(f)=\{(x,r)\in G\times L^0(\mathcal{F})~|~f(x)\leq r\}$ is closed in $(E,\mathcal{T}_{\varepsilon,\lambda})\times(L^0(\mathcal{F}),\mathcal{T}_{\varepsilon,\lambda})$.
\item $\mathcal{T}_c$--lower semicontinuous if $f$ is proper and $\{x\in G~|~f(x)\leq \eta\}$ is $\mathcal{T}_c$--closed for all $\eta\in L^0(\mathcal{F})$.
\item $L^0$--quasiconvex if $f$ is proper and $\{x\in G~|~f(x)\leq\eta\}$ is $L^0$--convex, $\forall \eta\in L^0(\mathcal{F})$.
\end{enumerate}
\end{definition}

\begin{remark}\label{Remark 3.2}
In Definition \ref{Definition 3.1}, if $G$ contains the null $\theta$ of $E$ and has the countable concatenation property, then the stability, $\sigma$--stability and the local property of $f$ coincide. Generally, if $G$  has the countable concatenation property, then it is easy to verify that the stability of $f$ can also imply the $\sigma$--stability of $f$.
\end{remark}

\par
Although an element $x\in E$ may not belong to $G$, it is possible that there exists some $A\in \mathcal{F}$ such that $\tilde{I}_Ax\in\tilde{I}_AG: =\{\tilde{I}_Ag~|~g\in G\}$, we are often interested in esssup$\{A\in \mathcal{F}~|~\tilde{I}_Ax\in \tilde{I}_AG\}$, as shown for the family $\mathcal{E}$ in the proof of \cite[Theorem 3.13]{Guo10}.

\begin{lemma}\label{Lemma 3.3}
Let $\mathcal{E}(x,G)=\{A\in \mathcal{F}~|~\tilde{I}_Ax\in\tilde{I}_AG\}$ and $S(x,G)=esssup(\mathcal{E}(x,G))$ for any $x\in E$. Then we have the following statements:
\begin{enumerate}[(1)]
\item $\mathcal{E}(x,G)$ is directed upwards $($in fact, is closed under the finite union operation$)$ for any fixed $x\in E$.
\item If $\theta\in G$, then $\mathcal{E}(x,G)=\{A\in\mathcal{F}~|~\tilde{I}_Ax\in G\}$ for any fixed $x\in E$.
\item If $\theta\in G$, then $S(x,G)\in\mathcal{E}(x,G)$ for any fixed $x\in E$.
\item If $\theta\in G$, then $S(\tilde{I}_Bx,G)=B^c\cup S(x,G)$ for any fixed $x\in E$ and $B\in\mathcal{F}$.
\end{enumerate}
\end{lemma}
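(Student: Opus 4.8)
The plan is to dispatch the four assertions in order, the workhorse throughout being that the $L^0$-convexity of $G$ permits gluing two elements of $G$ along a measurable set: for $g_1,g_2\in G$ and $A\in\mathcal{F}$ one has $\tilde{I}_A g_1+\tilde{I}_{A^c}g_2=\tilde{I}_A g_1+(1-\tilde{I}_A)g_2\in G$, since $0\leq\tilde{I}_A\leq 1$ makes this an $L^0$-convex combination (Definition \ref{Definition 2.5}). For (1) I would take $A,B\in\mathcal{E}(x,G)$ with witnesses $\tilde{I}_A x=\tilde{I}_A g_1$ and $\tilde{I}_B x=\tilde{I}_B g_2$, set $g:=\tilde{I}_A g_1+\tilde{I}_{A^c}g_2\in G$, and verify via the idempotent identities $\tilde{I}_{A\cup B}\tilde{I}_A=\tilde{I}_A$ and $\tilde{I}_{A\cup B}\tilde{I}_{A^c}=\tilde{I}_{B\setminus A}$ that $\tilde{I}_{A\cup B}g=\tilde{I}_A x+\tilde{I}_{B\setminus A}x=\tilde{I}_{A\cup B}x$ (using $B\setminus A\subseteq B$ to replace $g_2$ by $x$, and that $A$ and $B\setminus A$ are disjoint with union $A\cup B$). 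Hence $A\cup B\in\mathcal{E}(x,G)$, giving closedness under finite unions and a fortiori upward directedness.

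For (2), assuming $\theta\in G$: the inclusion $\supseteq$ is immediate, since $\tilde{I}_A x\in G$ forces $\tilde{I}_A x=\tilde{I}_A(\tilde{I}_A x)\in\tilde{I}_A G$. For $\subseteq$, if $\tilde{I}_A x=\tilde{I}_A g$ with $g\in G$, then $\tilde{I}_A g=\tilde{I}_A g+(1-\tilde{I}_A)\theta\in G$ by $L^0$-convexity, so $\tilde{I}_A x\in G$. The same convexity trick shows $\mathcal{E}(x,G)$ is hereditary downward (i.e. $A'\subseteq A\in\mathcal{E}(x,G)$ implies $A'\in\mathcal{E}(x,G)$), a fact I will reuse in (4).

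Part (3) is where the genuine work lies, and it is the step I expect to be the main obstacle. By (1), $\mathcal{E}(x,G)$ is directed upwards, so the family $\{\tilde{I}_A:A\in\mathcal{E}(x,G)\}\subseteq L^0_+(\mathcal{F})$ is directed upwards in $(\bar{L}^0(\mathcal{F}),\leq)$; Proposition \ref{Proposition 1.3} then supplies a nondecreasing sequence $\{A_n\}\subseteq\mathcal{E}(x,G)$ with $\bigvee_n\tilde{I}_{A_n}=\tilde{I}_{S(x,G)}$, that is $A_n\uparrow S(x,G)$ up to a $P$-null set. By (2) each $h_n:=\tilde{I}_{A_n}x$ lies in $G$, and for every $L^0$-seminorm in $\mathcal{P}$ one computes $\|h_n-\tilde{I}_{S(x,G)}x\|=\tilde{I}_{S(x,G)\setminus A_n}\|x\|\to 0$ almost surely, hence in probability, so $h_n\to\tilde{I}_{S(x,G)}x$ in $\mathcal{T}_{\varepsilon,\lambda}$. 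Since $G$ is $\mathcal{T}_{\varepsilon,\lambda}$-closed, $\tilde{I}_{S(x,G)}x\in G$, and (2) gives $S(x,G)\in\mathcal{E}(x,G)$. The delicate points are the passage to the increasing sequence and the control of the seminorm tails through the $L^0$-homogeneity (RNM-1); the rest is bookkeeping.

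Finally, for (4) I would first record the clean consequence of (2) and (3) that $\mathcal{E}(x,G)$ equals the collection of all $A\in\mathcal{F}$ with $A\subseteq S(x,G)$ up to a $P$-null set (the forward inclusion is just that $S(x,G)$ is the essential supremum, the reverse uses $S(x,G)\in\mathcal{E}(x,G)$ together with downward heredity from (2)). Writing $y=\tilde{I}_B x$ and combining (2) with $\tilde{I}_A\tilde{I}_B=\tilde{I}_{A\cap B}$ yields $\mathcal{E}(y,G)=\{A:A\cap B\in\mathcal{E}(x,G)\}=\{A:A\cap B\subseteq S(x,G)\}$. It then remains a pure set-algebra check that $S^{\ast}:=B^c\cup S(x,G)$ belongs to $\mathcal{E}(y,G)$, since $S^{\ast}\cap B=S(x,G)\cap B\subseteq S(x,G)$, and that it dominates every $A\in\mathcal{E}(y,G)$, since $A=(A\cap B)\cup(A\cap B^c)\subseteq S(x,G)\cup B^c=S^{\ast}$. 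Hence $S(\tilde{I}_B x,G)=esssup\,\mathcal{E}(y,G)=B^c\cup S(x,G)$, as claimed.
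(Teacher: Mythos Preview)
Your argument is correct and follows essentially the same route as the paper's own proof: the gluing via $L^0$-convexity for (1), the trivial rewriting for (2), the extraction of an increasing sequence from Proposition~\ref{Proposition 1.3} combined with $\mathcal{T}_{\varepsilon,\lambda}$-closedness for (3), and the set-algebraic double inclusion for (4) all match the paper (which in fact omits the details for (1) and (2)). The only cosmetic difference is that in (4) you first package $\mathcal{E}(x,G)$ as the hereditary ideal below $S(x,G)$ and then read off $\mathcal{E}(\tilde{I}_B x,G)=\{A:A\cap B\subseteq S(x,G)\}$, whereas the paper argues the two inclusions $B^c\cup S(x,G)\subseteq S(\tilde{I}_Bx,G)$ and $S(\tilde{I}_Bx,G)\subseteq B^c\cup S(x,G)$ directly; the content is identical.
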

\begin{proof}
$(1)$. Its proof is omitted since this proof is the same as that of $\mathcal{E}$ in \cite[Theorem 3.13]{Guo10}.
\par
$(2)$. The proof of $(2)$ is obvious.
\par
$(3)$. By $(1)$, there exists a nondecreasing sequence $\{A_n~|~n\in N\}$ in $\mathcal{E}(x,G)$ such that $\bigcup_{n\in N}A_n=S(x,G)$, then $\tilde{I}_{S(x,G)}x=\lim_{n\to \infty}\tilde{I}_{A_n}x\in G$ since each $\tilde{I}_{A_n}x\in G$ and $G$ is $\mathcal{T}_{\varepsilon,\lambda}$--closed, which in turn implies that $\tilde{I}_{S(x,G)}x=\tilde{I}_{S(x,G)} \cdot (\tilde{I}_{S(x,G)}x)\in\tilde{I}_{S(x,G)}G$, namely $S(x,G)\in \mathcal{E}(x,G)$.
\par
$(4)$. It is easy to see that both $B^c$ and $S(x,G)$ belong to $\mathcal{E}(\tilde{I}_Bx,G)$, so $B^c\cup S(x,G)\subset S(\tilde{I}_Bx,G)$. On the other hand, if $A\in \mathcal{E}(\tilde{I}_Bx,G)$, then $A\cap B\subset S(x,G)$, namely $A\cap B\cap S(x,G)^c=\emptyset $, which is equivalent to $A\subset B^c\cup S(x,G)$, and hence $S(\tilde{I}_Bx,G)\subset B^c\cup S(x,G)$. To sum up, we have that $S(\tilde{I}_Bx,G)=B^c\cup S(x,G)$.
\end{proof}

\par
In classical convex analysis, it is quite easy to extend a convex function defined on a closed convex subset to one defined on the whole space, see \cite[p.34]{ET99}, whereas it is completely another matter to extend an $L^0$--convex function defined on a $\mathcal{T}_{\varepsilon,\lambda}$--closed $L^0$--convex subset $G$ to one defined on the whole $L^0(\mathcal{F},K)$--module $E$. Now, on the basis of Lemma \ref{Lemma 3.3}, Lemma \ref{Lemma 3.4} below arrives at this aim, in particular when $f: G\to \bar{L}^0(\mathcal{F})$ is the constant function with value 0 on $G$. We can obtain a special function $\bar{f}: E\to \bar{L}^0(\mathcal{F})$ given by $\bar{f}(x)=\tilde{I}_{S(x,G)^c}(+\infty)$ for any $x\in E$, denoted by $\mathcal{X}_G$ and called the indicator function of $G$. By the way, Lemma \ref{Lemma 3.4} and $\mathcal{X}_G$ will play a crucial role in the section and in particular in the next section of this paper.

\begin{lemma}\label{Lemma 3.4}
Let $\theta\in G$ and $f: G\to \bar{L}^0(\mathcal{F})$ be a mapping. Define the mapping $\bar{f}: E\to \bar{L}^0(\mathcal{F})$ by $\bar{f}(x)=\tilde{I}_{S(x,G)}f(\tilde{I}_{S(x,G)}x)+\tilde{I}_{S(x,G)^c}(+\infty)$. Then the following statements hold:
\begin{enumerate}[(1)]
\item $\bar{f}$ is an extension of $f$.
\item $f$ is local iff $\bar{f}$ is local.
\item $f$ is proper iff $\bar{f}$ is proper.
\item If $f$ is proper, then $f$ is $L^0$--convex iff $f$ is local and epi$(f)$ is $L^0$--convex.
\item If $f$ is proper, then $f$ is $L^0$--convex iff $\bar{f}$ is $L^0$--convex.
\item $f$ is proper and $\mathcal{T}_{\varepsilon,\lambda}$--lower semicontinuous iff $\bar{f}$ is proper  and $\mathcal{T}_{\varepsilon,\lambda}$--lower semicontinuous.
\item $f$ is proper and $\mathcal{T}_c$--lower semicontinuous iff $\bar{f}$ is proper  and $\mathcal{T}_c$--lower semicontinuous.
\end{enumerate}
\end{lemma}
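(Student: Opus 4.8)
The plan is to treat (1)--(3) as the computational foundation built directly on Lemma \ref{Lemma 3.3}, to isolate a single set-theoretic identity relating the epigraphs (and lower level sets) of $f$ and $\bar{f}$, and then to reduce (5)--(7) to (2)--(4) by means of that identity, leaving (4) as the genuine technical core. For (1), I would observe that $x\in G$ forces $\Omega\in\mathcal{E}(x,G)$, hence $S(x,G)=\Omega$ and $\bar{f}(x)=f(x)$. For (3), since $\bar{f}$ restricts to $f$ on $G$ while off $S(x,G)$ it equals $+\infty>-\infty$ and at any finite point of $f$ it stays finite, properness passes in both directions. For (2) the main tool is Lemma \ref{Lemma 3.3}(4): writing $S(\tilde{I}_Ax,G)=A^c\cup S(x,G)$, I would expand both $\tilde{I}_A\bar{f}(x)$ and $\tilde{I}_A\bar{f}(\tilde{I}_Ax)$, reduce each to the common factor $\tilde{I}_{A\cap S(x,G)}$, and find that the two agree precisely when $\tilde{I}_{A\cap S(x,G)}f(\tilde{I}_{S(x,G)}x)=\tilde{I}_{A\cap S(x,G)}f(\tilde{I}_{A\cap S(x,G)}x)$; locality of $f$ applied to $\tilde{I}_{S(x,G)}x\in G$ (which lies in $G$ by Lemma \ref{Lemma 3.3}(3)) yields this identity, and the converse runs the same computation backwards using $S(g,G)=\Omega$ for $g\in G$.

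The cleanest way to dispatch (5)--(7) is the identity $\mathrm{epi}(f)=\mathrm{epi}(\bar{f})$ and, analogously, $\{x\in E\mid\bar{f}(x)\leq\eta\}=\{x\in G\mid f(x)\leq\eta\}$ for every $\eta\in L^0(\mathcal{F})$. Indeed, if $(x,r)\in\mathrm{epi}(\bar{f})$ with $r\in L^0(\mathcal{F})$ finite, then $\bar{f}(x)\leq r<+\infty$ forces $S(x,G)^c$ to be null, so $x\in G$ and $\bar{f}(x)=f(x)$; the reverse inclusion is immediate, and the same reasoning handles the level sets. Granting (2)--(4), statement (5) then follows by applying (4) to both $f$ and $\bar{f}$, using (2) to match the locality hypotheses and the epigraph identity to match the convexity hypotheses; (6) and (7) follow because the closedness being tested (in $\mathcal{T}_{\varepsilon,\lambda}$, resp.\ $\mathcal{T}_c$) is a property of one and the same underlying set, together with (3) for properness.

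This leaves (4), which I expect to be the main obstacle. The forward implication is routine: $L^0$-convexity of $\mathrm{epi}(f)$ is immediate from $f(\xi x+(1-\xi)y)\leq\xi f(x)+(1-\xi)f(y)$, and locality follows by writing $\tilde{I}_Ax=\tilde{I}_Ax+\tilde{I}_{A^c}\theta$ and $x=\tilde{I}_A(\tilde{I}_Ax)+\tilde{I}_{A^c}(\tilde{I}_{A^c}x)$ as $L^0$-convex combinations (legitimate since $\theta\in G$), applying $L^0$-convexity, and multiplying by $\tilde{I}_A$ to obtain the two opposite inequalities $\tilde{I}_Af(\tilde{I}_Ax)\leq\tilde{I}_Af(x)$ and $\tilde{I}_Af(x)\leq\tilde{I}_Af(\tilde{I}_Ax)$.

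The delicate direction is recovering $L^0$-convexity from locality plus convexity of $\mathrm{epi}(f)$ when $f$ attains the value $+\infty$, since the epigraph only directly controls points with finite second coordinate. My plan is to partition $\Omega$ according to the finiteness of $f(x)$ and $f(y)$: on $\{\xi>0\}\cap\{f(x)=+\infty\}$ and on $\{\xi<1\}\cap\{f(y)=+\infty\}$ the right-hand side $\xi f(x)+(1-\xi)f(y)$ equals $+\infty$ under the stated conventions, so the inequality is trivial there; on the complementary set, where $f(x)=+\infty$ forces $\xi=0$ and $f(y)=+\infty$ forces $\xi=1$, I would localize via the relevant $\tilde{I}$'s and use locality of $f$ to reduce to the regime in which both values are finite and the convexity of $\mathrm{epi}(f)$ applies. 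Keeping this case bookkeeping consistent with the conventions $0\cdot(\pm\infty)=0$ and $+\infty+(-\infty)=+\infty$ is where the real care is needed.
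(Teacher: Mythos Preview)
Your proposal is correct and follows essentially the same line as the paper's own proof: items (1)--(3) are handled exactly as the paper does (with Lemma \ref{Lemma 3.3}(4) driving the locality computation in (2)), the paper likewise reduces (5)--(7) to the identity $\mathrm{epi}(\bar f)=\mathrm{epi}(f)$ together with (2)--(4), and for (4) the paper simply defers to \cite[Theorem~3.2]{FKV09}, whereas you sketch the argument directly. Your outline of the delicate direction of (4) is on the right track; just note that to localize to the set $D=\{f(x)<+\infty\}\cap\{f(y)<+\infty\}$ you will need a globally finite anchor point $x_0\in\mathrm{dom}(f)$ (available by properness) so that the surrogates $\tilde I_Dx+\tilde I_{D^c}x_0$ and $\tilde I_Dy+\tilde I_{D^c}x_0$ lie in $G$ with finite $f$-values, after which the epigraph convexity and locality give the inequality on $D$.
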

\begin{proof}
$(1)$. is obvious.
\par
$(2)$. The locality of $\bar{f}$ obviously implies the locality of $f$. Conversely, let $f$ be local, then, for each $B\in \mathcal{F}$ and $x\in E$, by definition: $\bar{f}(x)=\tilde{I}_{S(x,G)}f(\tilde{I}_{S(x,G)}x)+\tilde{I}_{S(x,G)^c}(+\infty)$ and $\bar{f}(\tilde{I}_Bx)=\tilde{I}_{S(\tilde{I}_Bx,G)}f(\tilde{I}_{S(\tilde{I}_Bx,G)}\cdot\tilde{I}_Bx)+\tilde{I}_{S(\tilde{I}_Bx,G)^c}(+\infty)$; according to $(4)$ of Lemma \ref{Lemma 3.3}, $S(\tilde{I}_Bx,G)=B^c\cup S(x,G)$, then $\bar{f}(\tilde{I}_Bx)=\tilde{I}_{B^c\cup S(x,G)}f(\tilde{I}_{B\cap S(x,G)}x)+\tilde{I}_{B\cap S(x,G)^c}(+\infty)$, which further implies that $\tilde{I}_B\bar{f}(\tilde{I}_Bx)=\tilde{I}_{B\cap S(x,G)}f(\tilde{I}_{S(x,G)}x)+\tilde{I}_{B\cap S(x,G)^c}(+\infty)=\tilde{I}_B\bar{f}(x)$.
\par
$(3)$. It is clear by observing $dom(\bar{f}): =\{x\in E~|~\bar{f}(x)<+\infty$ on $\Omega\}=\{x\in G~|~f(x)<+\infty$ on $\Omega\}: =dom(f)$
\par
$(4)$. Its proof is omitted since its proof is completely similar to that of Theorem 3.2 of \cite{FKV09}.
\par
$(5)$. The $L^0$--convexity of $\bar{f}$ obviously implies the $L^0$--convexity of $f$. Conversely, by $(4)$ the $L^0$--convexity of $f$ implies that both $f$ is local and $epi(f)$ is $L^0$--convex, which in turn means that $\bar{f}$ is local by $(2)$ and $epi(\bar{f})=epi(f)$ is also $L^0$--convex, again by Theorem 3.2 of \cite{FKV09} $\bar{f}$ is $L^0$--convex.
\par
$(6)$ and $(7)$ both are obvious.
\end{proof}

\begin{theorem}\label{Theorem 3.5}
Let $(E,\mathcal{P})$ be such that both $E$ and $\mathcal{P}$ have the countable concatenation property, $\theta\in G$ and $f: G\to \bar{L}^0(\mathcal{F})$ a proper and local function. Then the following are equivalent:
\begin{enumerate}[(1)]
\item $f$ is $\mathcal{T}_c$--lower semicontinuous.
\item $f$ is $\mathcal{T}_{\varepsilon,\lambda}$--lower semicontinuous.
\item $epi(f)$ is $\mathcal{T}_c$--closed in $(E,\mathcal{T}_c)\times (L^0(\mathcal{F}),\mathcal{T}_c)$.
\item $\underline{\lim}_{\alpha}f(x_{\alpha})\geq f(x)$ for any $x\in G$ and any net $\{x_{\alpha},\alpha\in \Gamma\}$ in $G$ such that $\{x_{\alpha},\alpha\in \Gamma\}$ converges in $\mathcal{T}_c$ to $x$, where $\underline{\lim}_{\alpha}f(x_{\alpha})=\bigvee_{\beta\in\Gamma}(\bigwedge_{\alpha\geq\beta}f(x_{\alpha}))$ .
\end{enumerate}
\end{theorem}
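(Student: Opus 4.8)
The plan is to record first two structural facts that make the $(\varepsilon,\lambda)$--topology and the locally $L^0$--convex topology interchangeable on the objects at hand, and then to split the four conditions into a ``$\mathcal{T}_c$--internal'' block $(1)\Leftrightarrow(3)\Leftrightarrow(4)$ together with a single cross--topology link $(2)\Leftrightarrow(3)$.

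\emph{Preliminary reductions.} First I would show that $G$ itself has the countable concatenation property. Since $\theta\in G$ and $G$ is $L^0$--convex, taking the coefficient $\xi=\tilde{I}_A$ in Definition \ref{Definition 2.5} shows $\tilde{I}_A g_1+\tilde{I}_{A^c}g_2\in G$ for all $g_1,g_2\in G$ and $A\in\mathcal{F}$; by induction every finite concatenation $\sum_{i=1}^{N}\tilde{I}_{A_i}g_i$ lies in $G$, and a countable concatenation $\sum_{n=1}^{\infty}\tilde{I}_{A_n}g_n$ is the $\mathcal{T}_{\varepsilon,\lambda}$--limit of such finite ones (the omitted tail has probability tending to $0$), hence lies in the $\mathcal{T}_{\varepsilon,\lambda}$--closed set $G$. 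Consequently $G$ has the countable concatenation property, so by Remark \ref{Remark 3.2} the local function $f$ is automatically $\sigma$--stable, and by \cite[Theorem 3.12]{Guo10} (equivalently Lemma \ref{Lemma 2.15}) the $\mathcal{T}_{\varepsilon,\lambda}$--closed set $G$ is also $\mathcal{T}_c$--closed. These three facts will be used throughout.

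\emph{The link $(2)\Leftrightarrow(3)$.} Viewing $epi(f)$ as a subset of the product random locally convex module $E\times L^0(\mathcal{F})$, I would verify that $epi(f)$ has the countable concatenation property: given $(x_n,r_n)\in epi(f)$ and a partition $\{A_n\}$, the concatenations $x=\sum_n\tilde{I}_{A_n}x_n\in G$ and $r=\sum_n\tilde{I}_{A_n}r_n\in L^0(\mathcal{F})$ satisfy $f(x)=\sum_n\tilde{I}_{A_n}f(x_n)\leq\sum_n\tilde{I}_{A_n}r_n=r$ by the $\sigma$--stability of $f$, so $(x,r)\in epi(f)$. Then \cite[Theorem 3.12]{Guo10} gives $\overline{epi(f)}_{\varepsilon,\lambda}=\overline{epi(f)}_{c}$, whence $epi(f)$ is $\mathcal{T}_{\varepsilon,\lambda}$--closed iff it is $\mathcal{T}_c$--closed, which is exactly $(2)\Leftrightarrow(3)$. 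For the $\mathcal{T}_c$--internal directions: $(3)\Rightarrow(1)$ holds because $\{x\in G:f(x)\leq\eta\}$ is the preimage of the $\mathcal{T}_c$--closed set $epi(f)$ under the continuous map $x\mapsto(x,\eta)$; $(1)\Rightarrow(3)$ follows by a $\tfrac1n$--argument, since for a net $(x_\alpha,r_\alpha)\in epi(f)$ with $(x_\alpha,r_\alpha)\to(x,r)$ one has $r_\alpha\leq r+\tfrac1n$ eventually, so a tail lies in the $\mathcal{T}_c$--closed sublevel set $\{f\leq r+\tfrac1n\}$, giving $f(x)\leq r+\tfrac1n$ for all $n$ and hence $f(x)\leq r$ (with $x\in G$ as $G$ is $\mathcal{T}_c$--closed); and $(4)\Rightarrow(1)$ is immediate, since $f(x_\alpha)\leq\eta$ for all $\alpha$ forces $\underline{\lim}_\alpha f(x_\alpha)\leq\eta$ directly from the definition, and $(4)$ then yields $f(x)\leq\eta$.

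\emph{The hard direction $(1)\Rightarrow(4)$, and the main obstacle.} This is where the random setting genuinely departs from the classical lower--limit characterization, and I expect it to be the crux. Assume $(1)$, take $x_\alpha\to x$ in $\mathcal{T}_c$, put $\eta=\underline{\lim}_\alpha f(x_\alpha)$, and suppose for contradiction that $A:=(f(x)>\eta)$ has positive probability; then $\eta<+\infty$ on $A$, so one can choose $c\in L^0(\mathcal{F})$ with $\eta<c<f(x)$ on $A$. The classical step of passing to a cofinal subnet on which $f(x_\alpha)<c$ fails, because the strict inequality lives only on the random set $A$ and is realized, for different indices, on different pieces of $A$. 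The remedy is $\sigma$--stability: for each tail index $\beta$ one has $\bigwedge_{\alpha\geq\beta}f(x_\alpha)=\eta_\beta\leq\eta<c$ on $A$, so by Proposition \ref{Proposition 1.3} there is a countable family $\{x_{\alpha_k}\}_k$ with $\alpha_k\geq\beta$ and $\bigwedge_k f(x_{\alpha_k})=\eta_\beta$, whence $A\subseteq\bigcup_k(f(x_{\alpha_k})<c)$ up to a null set. Gluing the $x_{\alpha_k}$ along the induced partition of $A$ and setting the result equal to a fixed $g^{*}\in dom(f)$ off $A$ produces $w_\beta\in G$ with, by $\sigma$--stability, $f(w_\beta)=\tilde{I}_A f(z_\beta)+\tilde{I}_{A^c}f(g^{*})\leq\zeta$, where $z_\beta$ denotes the glued element on $A$ and $\zeta:=\tilde{I}_A c+\tilde{I}_{A^c}f(g^{*})\in L^0(\mathcal{F})$. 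Because every $\mathcal{T}_c$--neighborhood of $x$ is $L^0$--convex and $\sigma$--stable and the generating $L^0$--seminorms are stable, $w_\beta$ stays in any prescribed neighborhood of $\hat{x}:=\tilde{I}_A x+\tilde{I}_{A^c}g^{*}$ once $\beta$ is large (all indices $\alpha_k\geq\beta$), so $w_\beta\to\hat{x}$ in $\mathcal{T}_c$; closedness of $\{f\leq\zeta\}$ from $(1)$ then gives $f(\hat{x})\leq\zeta$, i.e. $f(x)\leq c$ on $A$, contradicting $c<f(x)$ on $A$. The technical heart is precisely this countable gluing, in which one must control simultaneously the sublevel constraint (through $\sigma$--stability of $f$) and the $\mathcal{T}_c$--convergence (through stability of the $L^0$--seminorms); every other implication reduces either to a direct order computation via Proposition \ref{Proposition 1.3} or to \cite[Theorem 3.12]{Guo10}.
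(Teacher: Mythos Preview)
Your proof is correct but takes a substantially different route from the paper's. The paper's argument is a two--line reduction: extend $f$ to $\bar f:E\to\bar L^0(\mathcal F)$ via Lemma~\ref{Lemma 3.4}, invoke \cite[Theorem~2.13]{GZW17} (which already proves the four--way equivalence for proper local functions defined on the whole module), and transfer the conclusions back to $f$ through the correspondences in Lemma~\ref{Lemma 3.4}. In other words, the paper externalises all the work to the global case and to the extension lemma.

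You instead argue everything intrinsically on $G$. You first establish that $G$ (hence $epi(f)$) has the countable concatenation property, which lets \cite[Theorem~3.12]{Guo10} handle the cross--topology step $(2)\Leftrightarrow(3)$; the $\mathcal T_c$--internal implications $(3)\Rightarrow(1)\Rightarrow(3)$ and $(4)\Rightarrow(1)$ are then routine, and the genuine content is your gluing argument for $(1)\Rightarrow(4)$, where $\sigma$--stability of $f$ and of the $L^0$--seminorms let you patch tail elements $x_{\alpha_k}$ along a measurable partition of the bad set $A$ to produce a $\mathcal T_c$--convergent net inside a fixed sublevel set. That step is exactly where the random setting departs from the classical one, and you handle it correctly: the key identities $\|w_\beta-\hat x\|_Q=\sum_k\tilde I_{B_k}\|x_{\alpha_k}-x\|_Q$ and $f(w_\beta)=\sum_k\tilde I_{B_k}f(x_{\alpha_k})+\tilde I_{A^c}f(g^{*})$ follow from $(RNM\text{--}1)$ and the $\sigma$--stability of $f$ respectively.

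What each approach buys: the paper's reduction is short and modular, but it presupposes both the extension machinery of Lemma~\ref{Lemma 3.4} and the external result \cite[Theorem~2.13]{GZW17}. Your direct argument is self--contained, makes the role of $\sigma$--stability and countable concatenation fully explicit, and in effect reproves the relevant part of \cite[Theorem~2.13]{GZW17} in situ; it is longer but does not leave the paper's internal toolkit.
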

\begin{proof}
Let $\bar{f}$ be the extension of $f$ as in Lemma \ref{Lemma 3.4}, then Theorem 2.13 of \cite{GZW17} shows that $(1), (2), (3)$ and $(4)$ are equivalent to another for $\bar{f}$, so they are still equivalent for $f$ by Lemma \ref{Lemma 3.4}.
\end{proof}

\par
In the sequel of this paper, for the sake of convenience we adopt the following convention: let $\xi$ and $\eta$ be in $\bar{L}^0(\mathcal{F})$ and arbitrarily choose $\xi^0$ and $\eta^0$ as representatives of $\xi$ and $\eta$ respectively, since $A=\{\omega\in \Omega ~|~ \xi^0(\omega)<\eta^0(\omega)\}$ is unique up to a set of zero probability, we briefly write $(\xi<\eta)$ for $A$, similarly one can understand such symbols as $(\xi\leq\eta), (\xi\neq\eta),(\xi=\eta)$ and so on.

Regarding Theorem \ref{Theorem 3.6} below, we would like to remind the reader of the fact that $G$ must have the countable concatenation property if $G$ is a $\mathcal{T}_{\varepsilon,\lambda}$--closed $L^0$--convex subset of a random locally convex module $(E,\mathcal{P})$ such that $E$ has the countable concatenation property.

\begin{theorem}\label{Theorem 3.6}
Let $(E,\mathcal{P})$ be a random locally convex module over $R$ with base $(\Omega,\mathcal{F},P)$ such that $E$ has the countable concatenation property, $G$ a nonempty $\mathcal{T}_{\varepsilon,\lambda}$--closed $L^0$--convex subset of $E$, $f : G\to \bar{L}^0(\mathcal{F})$ a proper, stable, $\mathcal{T}_c$--lower semicontinuous and $L^0$--quasiconvex function. If $G$ is $L^0$--convexly compact, then there exists $y_0\in G$ such that $f(y_0)=\bigwedge\{f(x)~|~x\in G\}$.
\end{theorem}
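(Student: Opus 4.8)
The plan is to realize $y_0$ as a point lying in every member of a nested family of sublevel sets attached to a well-chosen minimizing sequence, exploiting the finite-intersection formulation of $L^0$-convex compactness in Definition \ref{Definition 2.9}, exactly as in the classical fact that a lower semicontinuous function attains its infimum on a compact set. Throughout write $m = \bigwedge\{f(x) : x \in G\}$ for the infimum, taken in the complete lattice $\bar{L}^0(\mathcal{F})$ of Proposition \ref{Proposition 1.3}.

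First I would produce a well-behaved minimizing sequence. Using the stability of $f$, the family $\{f(x) : x \in G\}$ is directed downwards: given $x_1,x_2 \in G$, put $A = (f(x_1) \le f(x_2))$ and $x_3 = \tilde{I}_A x_1 + \tilde{I}_{A^c} x_2$; then $x_3 \in G$ by $L^0$-convexity and $f(x_3) = f(x_1) \wedge f(x_2)$ by stability. Hence Proposition \ref{Proposition 1.3}(2) furnishes a nonincreasing sequence $\{x_n\} \subset G$ with $\bigwedge_n f(x_n) = m$. Since $f$ is proper, fix $x^* \in G$ with $f(x^*) \in L^0(\mathcal{F})$, so that $m \le f(x^*) < +\infty$ on $\Omega$. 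Replacing each $x_n$ by $\tilde{I}_{A_n} x_n + \tilde{I}_{A_n^c} x^*$ with $A_n = (f(x_n) \le f(x^*))$ gives, via stability, values $f(x_n) \wedge f(x^*)$ which remain nonincreasing, still have infimum $m \wedge f(x^*) = m$, and now lie in $L^0(\mathcal{F})$ (they are $\le f(x^*)$ and $> -\infty$ by properness). I may therefore assume every $f(x_n)$ is finite-valued, which is what makes the sublevel sets legitimate objects for the quasiconvexity and lower semicontinuity hypotheses.

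Next I would set $H_n = \{x \in G : f(x) \le f(x_n)\}$ and verify that these form a decreasing chain of nonempty, $\mathcal{T}_{\varepsilon,\lambda}$-closed, $L^0$-convex subsets of $G$. The $L^0$-convexity of $H_n$ is immediate from $L^0$-quasiconvexity, and $H_n$ is $\mathcal{T}_c$-closed by $\mathcal{T}_c$-lower semicontinuity. Because $E$ has the countable concatenation property, so does $G$, whence the stability of $f$ upgrades to $\sigma$-stability (Remark \ref{Remark 3.2}); a direct computation $f(\sum_k \tilde{I}_{A_k} z_k) = \sum_k \tilde{I}_{A_k} f(z_k) \le f(x_n)$ then shows each $H_n$ inherits the countable concatenation property. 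The main obstacle is precisely this topological mismatch: $L^0$-convex compactness is phrased with $\mathcal{T}_{\varepsilon,\lambda}$-closed sets, whereas lower semicontinuity only delivers $\mathcal{T}_c$-closedness, and $\mathcal{T}_c$ is strictly finer. I would resolve it through the coincidence of closures $\overline{M}_{\varepsilon,\lambda} = \overline{M}_c$ valid for any $M$ with the countable concatenation property (Theorem 3.12 of \cite{Guo10}, recorded just before Lemma \ref{Lemma 2.15}): since $H_n$ has this property and is $\mathcal{T}_c$-closed, it equals $\overline{H_n}_c = \overline{H_n}_{\varepsilon,\lambda}$ and is thus $\mathcal{T}_{\varepsilon,\lambda}$-closed.

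Finally I would conclude by compactness. The chain $\{H_n\}$ is nested with each $H_n$ nonempty (it contains $x_n$), so this family of $\mathcal{T}_{\varepsilon,\lambda}$-closed $L^0$-convex subsets of $G$ has the finite intersection property. Since $G$ is $L^0$-convexly compact, Definition \ref{Definition 2.9} yields $\bigcap_n H_n \neq \emptyset$. Any $y_0 \in \bigcap_n H_n$ satisfies $f(y_0) \le f(x_n)$ for all $n$, hence $f(y_0) \le \bigwedge_n f(x_n) = m$; combined with $f(y_0) \ge m$, which holds because $y_0 \in G$, this gives $f(y_0) = m$, as required.
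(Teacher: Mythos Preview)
Your proof is correct and takes a genuinely different route from the paper's. Both arguments begin identically, using stability to see that $\{f(x):x\in G\}$ is directed downwards and extracting a nonincreasing minimizing sequence $\{x_n\}$ via Proposition~\ref{Proposition 1.3}. From there the paper invokes the net characterization of $L^0$-convex compactness (Theorem~\ref{Theorem 2.16}) to obtain a $\mathcal{T}_c$-convergent net $\{y_{(n,U)}\}$ with $y_{(n,U)}\in H_{cc}(conv_{L^0}\{x_k:k\ge n\})$, bounds $f(y_{(n,U)})\le f(x_n)$ through quasiconvexity and $\sigma$-stability, and concludes via the $\underline{\lim}$-characterization of $\mathcal{T}_c$-lower semicontinuity in Theorem~\ref{Theorem 3.5}. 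You instead work directly with the finite-intersection formulation of Definition~\ref{Definition 2.9}, applying it to the nested sublevel sets $H_n=\{x\in G:f(x)\le f(x_n)\}$; the real work in your approach is the topological bridge---showing each $H_n$ inherits the countable concatenation property from $\sigma$-stability of $f$ and then invoking $\overline{M}_{\varepsilon,\lambda}=\overline{M}_c$ to upgrade $\mathcal{T}_c$-closedness to $\mathcal{T}_{\varepsilon,\lambda}$-closedness. Your argument is more elementary in that it bypasses both Theorem~\ref{Theorem 2.16} and Theorem~\ref{Theorem 3.5} entirely and mirrors the classical compactness proof transparently; the paper's approach, on the other hand, illustrates how the $\mathcal{T}_c$-convergent-subnet machinery of Theorem~\ref{Theorem 2.16} operates, which is the workhorse reused later in Section~\ref{Section 4}. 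One minor point: the closure-coincidence result you cite may tacitly require $\mathcal{P}$ to have the countable concatenation property; if so, the same WLOG reduction to $\mathcal{P}_{cc}$ that opens the paper's proof handles this harmlessly.
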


\begin{proof}
We can, without loss of generality, assume that $\mathcal{P}$ has the countable concatenation property, otherwise we consider the random locally convex module $(E, \mathcal{P}_{cc})$, where $\mathcal{P}_{cc}$ denotes the countable concatenation hull of $\mathcal{P}$, one only needs to bear in mind that $\mathcal{P}_{cc}$ and $\mathcal{P}$ induce the same $(\varepsilon,\lambda)$--topology and the locally $L^0$--convex topology induced by $\mathcal{P}_{cc}$ is stronger than that induced by $\mathcal{P}$. We can also, without loss of generality, assume $\theta\in G$ $($otherwise, we make a translation$)$, and let $\eta=\bigwedge\{f(x)~|~x\in G\}$. First, $\{f(x)~|~x\in G\}$ is directed downwards: for any $x_1$ and $x_2\in G$, let $A=(f(x_1)\leq f(x_2))$ and $x_3=\tilde{I}_Ax_1+ \tilde{I}_{A^c}x_2$, then $x_3\in G$ and $f(x_3)=\tilde{I}_Af(x_1)+\tilde{I}_{A^c}f(x_2)=f(x_1)\bigwedge f(x_2)$ by the stability of $f$. Thus, by Proposition \ref{Proposition 1.3} there exists a sequence $\{x_n,n\in N\}$ in $G$ such that $\{f(x_n),n\in N\}$ converges to $\eta$ in a nonincreasing way.
\par
By the $L^0$--convex compactness of $G$ and Theorem \ref{Theorem 2.16} there exists a net $\{y_{(n,U)}, (n,U)\in N\times \mathcal{U}\}$ convergent in $\mathcal{T}_c$ to some $y_0\in E$ such that the following two conditions are satisfied:
\begin{enumerate}[(i)]

\item $\mathcal{U}$ is a local base at $\theta$ of $\mathcal{T}_c$;
\item $y_{(n,U)}\in (y_0+ U)\cap H_{cc}(conv_{L^0}\{x_k, k\geq n\})$ for each $(n,U)\in N\times \mathcal{U}$.
\end{enumerate}
\par
Since $E$ has the countable concatenation property, it is easy to check that  the $\mathcal{T}_{\varepsilon,\lambda}$--closedness and $L^0$--convexity of $G$ imply that $G$ has the countable concatenation property so that each $y_{(n,U)}\in G$. Further, $y_0\in G$ since $G$ is also $\mathcal{T}_c$--closed by \cite[Theorem 3.12]{Guo10}.
\par
By the $L^0$--quasiconvexity of $f$, $f(y)\leq \bigvee_{k\geq n}f(x_k)$ for each $y\in conv_{L^0}\{ x_k,
 k \geq n \}$. Further, since $f$ is $\sigma$--stable by the fact that $\theta\in G$ and $f$ is local at this time, then $f(y)\leq \bigvee_{k\geq n}f(x_k)$ for each $y\in H_{cc}(conv_{L^0}\{x_k, k\geq n\})$. Thus, $f(y_{(n,U)})\leq \bigvee_{k\geq n}f(x_k)=f(x_n)$. Now, by the $\mathcal{T}_c$--lower semicontinuity of $f$ and Theorem \ref{Theorem 3.5}, $f(y_0)\leq \underline{\lim}f(y_{(n,U)})\leq \lim_n f(x_n)=\eta$, which means that $f(y_0)=\bigwedge\{f(x)~|~x\in G\}$.
\end{proof}

\begin{definition}\label{Definition 3.7}
Let $(E,\|\cdot\|)$ be a random normed module over $R$ with base $(\Omega,\mathcal{F},P)$ and $G\subset E$. A mapping $f : G\to \bar{L}^0(\mathcal{F})$ is coercive if $\{f(u_n),n\in N\}$ converges almost surely to $+\infty$ on $A$ for any sequence $\{u_n,n\in N\}$ in $G$ and any $A\in \mathcal{F}$ of positive probability such that $\{\|u_n\|,n\in N\}$ converges almost surely to $+\infty$ on $A$.
\end{definition}

\begin{theorem}\label{Theorem 3.8}
Let $(E,\|\cdot\|)$ be a random reflexive random normed module over $R$ with base $(\Omega,\mathcal{F},P)$, $G\subset E$ a nonempty $\mathcal{T}_{\varepsilon,\lambda}$--closed $L^0$--convex subset of $E$ and $f : G\to \bar{L}^0(\mathcal{F})$ a proper, stable, coercive, $\mathcal{T}_c$--lower semicontinuous and $L^0$--quasiconvex function. Then there exists $y_0\in G$ such that $f(y_0)=\bigwedge\{f(x)~|~x\in G\}$.
\end{theorem}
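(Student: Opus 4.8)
The plan is to run the classical recipe ``coercivity confines the search to a bounded sublevel set, on which compactness produces a minimizer'', with weak compactness replaced by $L^0$--convex compactness via Corollary \ref{Corollary 2.23} and the attainment step delegated to Theorem \ref{Theorem 3.6}. Since $f$ is proper, first pick $x_*\in G$ with $f(x_*)\in L^0(\mathcal{F})$ and translate by $-x_*$; this preserves every hypothesis (coercivity included, since $\|\cdot\|$--boundedness is translation invariant) and arranges $\theta\in G$ with $f(\theta)\in L^0(\mathcal{F})$. Set $G_0=\{x\in G~|~f(x)\leq f(\theta)\}$, which is nonempty ($\theta\in G_0$). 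Because $f$ is $L^0$--quasiconvex, $G_0$ is $L^0$--convex; because $f$ is $\mathcal{T}_c$--lower semicontinuous, $G_0$ is $\mathcal{T}_c$--closed by Definition \ref{Definition 3.1}(7). Moreover, $\theta\in G$ together with the stability of $f$ forces $f$ to be local, so Theorem \ref{Theorem 3.5} upgrades the $\mathcal{T}_c$--lower semicontinuity to $\mathcal{T}_{\varepsilon,\lambda}$--lower semicontinuity; the latter makes $epi(f)$ (hence each sublevel set) $\mathcal{T}_{\varepsilon,\lambda}$--closed, so $G_0$ is $\mathcal{T}_{\varepsilon,\lambda}$--closed.

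The decisive step is to show that $G_0$ is almost surely bounded, and this is exactly where coercivity enters, in its randomized form. Using the stability of $f$ and the $L^0$--convexity of $G$, I would first verify that $\{\|x\|~|~x\in G_0\}$ is directed upwards: for $x_1,x_2\in G_0$ and $B=(\|x_1\|\leq\|x_2\|)$, the mixture $x_3=\tilde{I}_{B^c}x_1+\tilde{I}_Bx_2$ lies in $G_0$ (its $f$--value is $\tilde{I}_{B^c}f(x_1)+\tilde{I}_Bf(x_2)\leq f(\theta)$) and satisfies $\|x_3\|=\|x_1\|\bigvee\|x_2\|$. By Proposition \ref{Proposition 1.3} there is then a sequence $\{x_n,n\in N\}\subseteq G_0$ with $\{\|x_n\|\}$ nondecreasing and $\bigvee_n\|x_n\|=\xi:=\bigvee\{\|x\|~|~x\in G_0\}$. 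If $G_0$ were not almost surely bounded, then $A:=(\xi=+\infty)$ would have positive probability, and on $A$ one would have $\|x_n\|\to+\infty$ a.s.; coercivity (Definition \ref{Definition 3.7}) then forces $f(x_n)\to+\infty$ a.s. on $A$, contradicting $f(x_n)\leq f(\theta)<+\infty$ a.s. Hence $\xi\in L^0_+(\mathcal{F})$, i.e.\ $G_0$ is almost surely bounded.

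Now $G_0$ is $\mathcal{T}_{\varepsilon,\lambda}$--closed, $L^0$--convex and almost surely bounded, so Corollary \ref{Corollary 2.23} (using that a random reflexive module is $\mathcal{T}_{\varepsilon,\lambda}$--complete, as recorded in the excerpt, and therefore has the countable concatenation property) shows that $G_0$ is $L^0$--convexly compact. The restriction $f|_{G_0}$ inherits properness, stability, $\mathcal{T}_c$--lower semicontinuity and $L^0$--quasiconvexity (sublevel sets of $f|_{G_0}$ are intersections of $L^0$--convex, $\mathcal{T}_c$--closed sets with $G_0$), so Theorem \ref{Theorem 3.6} applies to $(G_0,f|_{G_0})$ and yields $y_0\in G_0$ with $f(y_0)=\bigwedge\{f(x)~|~x\in G_0\}$.

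It remains to identify this infimum with the global one. Clearly $\bigwedge\{f(x)~|~x\in G_0\}\geq\bigwedge\{f(x)~|~x\in G\}$; for the reverse, take any $x\in G$ and put $C=(f(x)\leq f(\theta))$. The mixture $x'=\tilde{I}_Cx+\tilde{I}_{C^c}\theta$ lies in $G_0$ and, by stability, $f(x')=\tilde{I}_Cf(x)+\tilde{I}_{C^c}f(\theta)\leq f(x)$ on $\Omega$, so $\bigwedge\{f(x)~|~x\in G_0\}\leq f(x)$ for every $x\in G$, whence $\bigwedge\{f(x)~|~x\in G_0\}=\bigwedge\{f(x)~|~x\in G\}$ and thus $f(y_0)=\bigwedge\{f(x)~|~x\in G\}$. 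I expect the main obstacle to be the almost--sure boundedness of $G_0$: unlike the scalar case, ``coercivity kills the bad set'' must be carried out on the measurable set $A$ on which the $L^0$--norm is unbounded, which is precisely why the directed--upward structure of $\{\|x\|~|~x\in G_0\}$ and Proposition \ref{Proposition 1.3} are needed to manufacture a genuine nondecreasing sequence against which Definition \ref{Definition 3.7} can be invoked.
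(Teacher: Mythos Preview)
Your proof is correct and follows essentially the same approach as the paper: use stability to manufacture mixtures that make the relevant norm set directed upwards, invoke coercivity against a sequence witnessing unboundedness to derive a contradiction, then apply Corollary~\ref{Corollary 2.23} and Theorem~\ref{Theorem 3.6}. The only organisational difference is that the paper first extracts a minimizing sequence $\{x_n\}$ (with $f(x_n)\downarrow\eta$), shows \emph{that sequence} is almost surely bounded via an inductive mixture construction, and then works on $G_1=\{x\in G:\|x\|\leq\bigvee_k\|x_k\|\text{ and }f(x)\leq f(x_1)\}$, whereas you work directly with the full sublevel set $G_0=\{x\in G:f(x)\leq f(\theta)\}$ and show it is itself almost surely bounded; your route is slightly cleaner but uses the same ingredients.
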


\begin{proof}
Denote $\bigwedge\{f(x)~|~x\in G\}$ by $\eta$, then as in the proof of Theorem \ref{Theorem 3.6} there exists a sequence $\{x_n,n\in N\}$ in $G$ such that $\{f(x_n),n\in N\}$ converges to $\eta$ in a nonincreasing way. Since $f$ is proper, we can, without loss of generality, assume that $f(x_1)\in L^0(\mathcal{F})$.
\par
First, we can assert that $\{x_n,n\in N\}$ is almost surely bounded. Otherwise, there exists some $A\in\mathcal{F}$ with positive probability such that $\bigvee_{n\in N}\|x_n\|=+\infty$ on $A$. To produce a contradiction, we prove that there exists a sequence $\{x^*_n,n\in N\}$ in $G$ with the following properties:
\begin{enumerate}[(1)]
\item for each $n\in N$ there exists a finite partition $\{B_k,1\leq k\leq n\}$ of $\Omega$ to $\mathcal{F}$ such that $x^*_n=\sum^n_{k=1}\tilde{I}_{B_k}x_k$;
\item $\|x^*_n\|=\bigvee^n_{k=1}\|x_k\|$ for each $n\in N$.
\end{enumerate}
\par
In fact, let $A_1=(\|x_1\|\leq\|x_2\|), A_2=A^c_1$ and $x^*_2=\tilde{I}_{A_1}x_2+\tilde{I}_{A_2}x_1$, then it is easy to check that $\|x^*_2\|=\|x_1\|\vee\|x_2\|$. By noting that $(1)$ and $(2)$ automatically hold when $n=1($by taking $x^*_1=x_1)$, that is to say, we have proved the above assertion for $n\leq 2$. Let the assertion hold for $n=k(k>2)$, then there exists a finite partition $\{B'_j,1\leq j\leq k\}$ of $\Omega$ to $\mathcal{F}$ and $x^*_k\in G$ such that $x^*_k=\sum^k_{j=1}\tilde{I}_{B^{\prime}_j}x_j$ and $\|x^*_k\|=\bigvee^k_{j=1}\|x_j\|$. Now, let $A=(\|x^*_k\|\leq\|x_{k+1}\|)$ and $x^*_{k+1} =\tilde{I}_Ax_{k+1}+\tilde{I}_{A^c}x^*_k$, then it is easy to see that $\|x^*_{k+1}\|=\|x_{k+1}\|\vee\|x^*_k\|=\bigvee^{k+1}_{j=1}\|x_j\|$ and $x^*_{k+1}=\sum^{k+1}_{j=1}\tilde{I}_{B_j}x_j$ with $B_{k+1}=A$ and $B_j=B^{\prime}_j\cap A^c$ for all $j$ such that $1\leq j \leq k$. So the induction method can be used to end the proof of this assertion.
\par
By the coercivity of $f$, $\{f(x^*_n), n\in N\}$ converges almost surely to $+\infty$ on $A$, but $f(x^*_n)=\sum^n_{k=1}\tilde{I}_{A_k}f(x_k)\leq f(x_1)$ by the stability of $f$, which contradicts to the assumption on $f(x_1)$.
\par
Setting $\xi=\bigvee_{k\geq 1}\|x_k\|$ and $G_1=G\cap \{x\in G~|~\|x\|\leq\xi$ and $f(x)\leq f(x_1)\}$, then $G_1$ is both $\mathcal{T}_{\varepsilon,\lambda}$--closed and $\mathcal{T}_c$--closed since $G_1$ has the countable concatenation property $($since $E$ has the property by the $\mathcal{T}_{\varepsilon,\lambda}$--completeness of $E)$. Further, by Corollary \ref{Corollary 2.23} $G_1$ is $L^0$--convexly compact. Since $\mathcal{P}=\{\|\cdot\|\}$, of course, has the countable concatenation property, by Theorem \ref{Theorem 3.6} there exists $y_0\in G_1$ such that $f(y_0)=\bigwedge\{f(x)~|~x\in G_1\}$. Finally, one can obviously observe that each $x_n$ is in $G_1$, then $f(y_0)\leq\bigwedge\{f(x_n),n\in N\}=\eta=\bigwedge\{f(x)~|~x\in G\}$, namely $f(y_0)=\bigwedge\{f(x)~|~x\in G\}$.
\end{proof}

\begin{remark}\label{Remark 3.9}
Let $(E,\mathcal{P})$ be a random locally convex module over $R$ with base $(\Omega,\mathcal{F},P)$ and $G\subset E$ an $L^0$--convex subset of $E$. An $L^0$--convex function $f: G\to \bar{L}^0(\mathcal{F})$ is strictly $L^0$--convex if $f(\lambda x+(1-\lambda)y)<\lambda f(x)+(1-\lambda)f(y)$ on $(0<\lambda<1)\cap(x\neq y)$ for all $x,y\in G$ and $\lambda\in L^0_+(\mathcal{F})$ with $0\leq\lambda\leq 1$, where $(x\neq y)=(\bigvee_{\|\cdot\|\in\mathcal{P}}\|x-y\|>0)$. It is easy to see that if $f$ in Theorem \ref{Theorem 3.6} and Theorem \ref{Theorem 3.8} is strictly $L^0$--convex, then $y_0$ must be unique. Let $M$ be a $\mathcal{T}_{\varepsilon,\lambda}$--closed submodule of $L^2_{\mathcal{F}}(\mathcal{E})$, $\pi : M \to L^0(\mathcal{F})$ a continuous module homomorphism from $(M,\mathcal{T}_{\varepsilon,\lambda})$ to $(L^0(\mathcal{F}),\mathcal{T}_{\varepsilon,\lambda})$ such that there exists some $z_0 \in M$ such that $P\{ \omega \in \Omega : \pi(z_0)(\omega) \neq 0 \} =1, \mathrm{w} \in L^0(\mathcal{F})$,$G = \{ x \in M : \pi(x) =1 $ and $E[x|\mathcal{F}] = \mathrm{w} \}$ and $f : M \to L^0(\mathcal{F})$ be defined by $f(x) = D(x|\mathcal{F}) := E[|x|^2|\mathcal{F}] - (E[x|\mathcal{F}])^2$ for any $x \in M$. Hansen and Richard \cite{HR87} proved that there exists unique one $x \in G$ such that $f(x) = \bigwedge\{ f(y) : y \in G \}$. Clearly, the result is a spacial case of Theorem \ref{Theorem 3.8} since $f$ is a proper, stable, coercive, $\mathcal{T}_{c}$--lower semicontinuous and strictly $L^0$--convex function on $G$.
\end{remark}

\begin{corollary}\label{Corollary 3.10}
Let $(E,\|\cdot\|)$ and $G$ be the same as in Theorem \ref{Theorem 3.8}, $a(\cdot,\cdot) : E\times E\to L^0(\mathcal{F})$ a $\mathcal{T}_{\varepsilon,\lambda}$--continuous $L^0$--bilinear form such that $a(x,x)\geq\alpha\|x\|^2$ for all $x\in E($where $\alpha$ is some fixed element of $L^0_{++}(\mathcal{F}))$, then for any given $l\in E^*$ there exists a unique $u\in G$ which achieves minimum over $G$ of the $L^0$--convex function $F$ defined by $F(x)=a(x,x)-2l(x)$ for all $x\in E$.
\end{corollary}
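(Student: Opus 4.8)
The plan is to verify that the restriction of $F$ to $G$ satisfies every hypothesis of Theorem \ref{Theorem 3.8}, so that the existence of a minimizer $u\in G$ follows at once; uniqueness will then be a consequence of the strict $L^0$--convexity of $F$ together with Remark \ref{Remark 3.9}. Concretely, I would check in turn that $F$ is proper, stable, coercive, $\mathcal{T}_c$--lower semicontinuous and strictly $L^0$--convex (strict $L^0$--convexity implying $L^0$--quasiconvexity). Properness is immediate: $a(x,x)$ and $l(x)$ lie in $L^0(\mathcal{F})$ for every $x$, so $F$ is $L^0(\mathcal{F})$--valued and finite, and $G\neq\emptyset$.

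The computational heart of the argument is a single polarization identity. Using the $L^0$--bilinearity of $a$ and the $L^0$--linearity of $l$, for $x,y\in G$ and $\lambda\in L^0_+(\mathcal{F})$ with $0\leq\lambda\leq 1$ one expands $F(\lambda x+(1-\lambda)y)$ and collects terms (the affine part $-2l$ cancelling exactly) to obtain
\begin{equation*}
\lambda F(x)+(1-\lambda)F(y)-F(\lambda x+(1-\lambda)y)=\lambda(1-\lambda)\,a(x-y,x-y).
\end{equation*}
Since $a(x-y,x-y)\geq\alpha\|x-y\|^2\geq 0$, the right--hand side is nonnegative, so $F$ is $L^0$--convex and hence $L^0$--quasiconvex; moreover on $(0<\lambda<1)\cap(x\neq y)$ it is bounded below by $\lambda(1-\lambda)\alpha\|x-y\|^2>0$, which is exactly strict $L^0$--convexity in the sense of Remark \ref{Remark 3.9}. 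Stability is verified by the same bilinear bookkeeping: for $A\in\mathcal{F}$ the idempotents $\tilde I_A,\tilde I_{A^c}$ satisfy $\tilde I_A\tilde I_{A^c}=0$ and $\tilde I_A^2=\tilde I_A$, so $a(\tilde I_Ax+\tilde I_{A^c}y,\tilde I_Ax+\tilde I_{A^c}y)=\tilde I_Aa(x,x)+\tilde I_{A^c}a(y,y)$ and likewise $l(\tilde I_Ax+\tilde I_{A^c}y)=\tilde I_Al(x)+\tilde I_{A^c}l(y)$, giving $F(\tilde I_Ax+\tilde I_{A^c}y)=\tilde I_AF(x)+\tilde I_{A^c}F(y)$.

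For coercivity, the bound $|l(x)|\leq\|l\|\,\|x\|$ (valid since $l\in E^*$) yields $F(x)\geq\alpha\|x\|^2-2\|l\|\,\|x\|=\|x\|\,(\alpha\|x\|-2\|l\|)$; as $\alpha\in L^0_{++}(\mathcal{F})$ and $\|l\|<+\infty$ almost surely, on any $A\in\mathcal{F}$ of positive probability along which $\|u_n\|\to+\infty$ a.s.\ the factor $\alpha\|u_n\|-2\|l\|$ tends to $+\infty$ on $A$, hence $F(u_n)\to+\infty$ on $A$. For $\mathcal{T}_c$--lower semicontinuity I would first note that $x\mapsto a(x,x)$ is $\mathcal{T}_{\varepsilon,\lambda}$--continuous (the diagonal map $x\mapsto(x,x)$ followed by the jointly $\mathcal{T}_{\varepsilon,\lambda}$--continuous form $a$) and that $l$ is $\mathcal{T}_{\varepsilon,\lambda}$--continuous, so $F$ is $\mathcal{T}_{\varepsilon,\lambda}$--continuous; therefore each sublevel set $\{x\in G:F(x)\leq\eta\}$ is $\mathcal{T}_{\varepsilon,\lambda}$--closed, and since $\mathcal{T}_c$ is finer than $\mathcal{T}_{\varepsilon,\lambda}$ every $\mathcal{T}_{\varepsilon,\lambda}$--closed set is $\mathcal{T}_c$--closed, whence $F$ is $\mathcal{T}_c$--lower semicontinuous.

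With all hypotheses in place, Theorem \ref{Theorem 3.8} produces $u\in G$ with $F(u)=\bigwedge\{F(x):x\in G\}$, and strict $L^0$--convexity forces $u$ to be unique via Remark \ref{Remark 3.9}. I expect the only delicate point to be the passage to $\mathcal{T}_c$--lower semicontinuity: the continuity hypothesis on $a$ is stated for the coarser $(\varepsilon,\lambda)$--topology, so one must transfer closedness of sublevel sets upward to the finer locally $L^0$--convex topology rather than attempt to prove $\mathcal{T}_c$--continuity of $a$ directly; alternatively, after observing that $F$ is local (which follows from stability once one translates so that $\theta\in G$), one may invoke the equivalence of $\mathcal{T}_{\varepsilon,\lambda}$-- and $\mathcal{T}_c$--lower semicontinuity in Theorem \ref{Theorem 3.5}, using that both $E$ (by its $\mathcal{T}_{\varepsilon,\lambda}$--completeness) and $\mathcal{P}=\{\|\cdot\|\}$ enjoy the countable concatenation property.
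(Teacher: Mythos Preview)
Your proposal is correct and follows essentially the same approach as the paper, which simply omits the proof as ``similar to that of Remark 1.1 of \cite[Chapter II]{ET99}'': that classical argument is precisely the verification---coercivity from $a(x,x)\geq\alpha\|x\|^2$, (strict) convexity from the polarization identity, continuity of $F$---needed to feed the existence/uniqueness theorem, and you have carried out its $L^0$--module adaptation (stability, $\mathcal{T}_c$--lower semicontinuity via the finer topology) so as to invoke Theorem~\ref{Theorem 3.8} and Remark~\ref{Remark 3.9}.
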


\begin{proof}
It is omitted since it is similar to that of Remark 1.1 of \cite[Chapter II]{ET99}.
\end{proof}

\par
As is shown in \cite[Chapter II, Section 2]{ET99}, Minty type variational inequalities can characterize solutions of minimazation problems, we introduce the notion of G\^{a}teaux derivatives $($slightly more general than that in \cite{GZW17}$)$ to obtain an $L^0$--module version of Minty type variational inequalities.

\begin{definition}\label{Definition 3.11}
Let $(E,\mathcal{P})$ be a random locally convex module over $R$ with base $(\Omega,\mathcal{F},P)$ and $f : E\to \bar{L}^0(\mathcal{F})$ is a proper function, $f$ is G\^{a}teaux--differentiable at $x\in dom(f)$ if there exists $g\in E^*_{\varepsilon,\lambda}$ such that the almost sure limit of $\{\frac{f(x+\lambda_ny)-f(x)}{\lambda_n}, n\in N\}$ exists whenever $\{\lambda_n, n\in N\}$ is a sequence in $L^0_{++}(\mathcal{F})$ such that $\lambda_n\downarrow 0$ and $g(y)=$ the a.s--$\lim_{\lambda_n\downarrow 0}\frac{f(x+\lambda_ny)-f(x)}{\lambda_n}$ for all $y\in E$, at which time $g$ is also called the G\^{a}teaux--derivative of $f$ at $x$, denoted by $f'(x)$, $f$ is G\^{a}teaux--differentiable on a subset $G$ of $E$ if $f$ is G\^{a}teaux--differentiable at each point of $G$, in which case $f'$ is said to $H$--continuous on $G$ if for each $y\in E, f'(\cdot)(y):$ is sequentially continuous from $(G,\mathcal{T}_{\varepsilon,\lambda})\to (L^0(\mathcal{F}),\mathcal{T}_{\varepsilon,\lambda})$, namely $f'(x_n)(y)$ converges in $\mathcal{T}_{\varepsilon,\lambda}$ to $f'(x_0)(y)$ whenever a sequence $\{x_n, n\in N\}$ in $G$ converges in $\mathcal{T}_{\varepsilon,\lambda}$ to $x_0\in G$.
\end{definition}

\begin{remark}\label{Remark 3.12}
Definition \ref{Definition 3.11} is designed for a proper function defined on the whole random locally convex module $(E,\mathcal{P})$. Then for a function $f$ only defined on a $\mathcal{T}_{\varepsilon,\lambda}$--closed $L^0$--convex subset $G$ of $E$, when we speak of G\^{a}teaux--differentiability of $f$ on $G$ we mean that there exists an extension $\bar{f}$ of $f$ onto $E$ such that $\bar{f}$ is G\^{a}teaux--differentiable on $G($please note that Lemma \ref{Lemma 3.4} guarantees the existence of such an extension$)$. On the other hand, by Proposition \ref{Proposition 1.3} $\bigwedge\{\frac{f(x+\lambda y)-f(x)}{\lambda}~|~\lambda\in L^0_{++}(\mathcal{F})\}$ always exists (although the infimum may be any element of $\bar{L}^0(\mathcal{F})$), further it is easy to check for an $L^0$--convex function $f$ that the net $\{\frac{f(x+\lambda y)-f(x)}{\lambda}, \lambda\in L^0_{++}(\mathcal{F}) \}$ is nondecreasing in the sense : $\frac{f(x+\lambda_1 y)-f(x)}{\lambda_1}\leq \frac{f(x+\lambda_2 y)-f(x)}{\lambda_2}$ whenever $\lambda_1\leq\lambda_2($ here $L^0_{++}(\mathcal{F})$ is directed in the usual order$)$, so $\bigwedge\{\frac{f(x+\lambda y)-f(x)}{\lambda}~|~\lambda\in L^0_{++}(\mathcal{F}) \}=\lim_{\lambda\downarrow 0}\frac{f(x+\lambda y)-f(x)}{\lambda}$. In particular, when $f$ is G\^{a}teaux--differentiable at $x$, taking $\lambda=1$ and $v=y-x$ yields that $f(y)-f(x)\geq f'(x)(v)=f'(x)(y-x)$, namely $f'(x)$ is a subgradient of $f$ at $x$.
\end{remark}

\begin{lemma}\label{Lemma 3.13}
Let $(E,\mathcal{P})$ be a random locally convex module over $R$ with base $(\Omega,\mathcal{F},P)$ and $G$ a $\mathcal{T}_{\varepsilon,\lambda}$--closed $L^0$--convex subset of $E$. If $f : G\to L^0(\mathcal{F})$ is G\^{a}teaux--differentiable on $G$, then we have the following statements:
\begin{enumerate}[(1)]
\item $f$ is $L^0$--convex iff $f(y)\geq f(x)+f'(x)(y-x)$ for all $x,y\in G$;
\item $f$ is strictly $L^0$--convex iff $f(y)>f(x)+f'(x)(x-y)$ on $(x\neq y)$ for all $x,y\in G$;
\item If $f$ is $L^0$--convex, then $f'$ is monotone, namely $(f'(x)-f'(y))(x-y)\geq 0$ for all $x,y\in G$.
\end{enumerate}
\end{lemma}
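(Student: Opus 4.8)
The plan is to read all three items as the $L^0$-module transcriptions of the classical first-order characterizations of convexity, the common tool being the subgradient inequality recorded at the end of Remark~\ref{Remark 3.12}, namely $f(y)-f(x)\geq f'(x)(y-x)$, which holds for every $L^0$-convex G\^ateaux-differentiable $f$ because, along the segment $(1-\lambda)x+\lambda y\in G$ ($0\leq\lambda\leq 1$), the difference quotient $\tfrac{f(x+\lambda(y-x))-f(x)}{\lambda}$ is nondecreasing in $\lambda$, so its limit $f'(x)(y-x)$ as $\lambda\downarrow 0$ is dominated by its value $f(y)-f(x)$ at $\lambda=1$. This is exactly the forward implication of (1). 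For the converse I would fix $x,y\in G$ and $\xi\in L^0_+(\mathcal{F})$ with $0\leq\xi\leq 1$, put $z=\xi x+(1-\xi)y\in G$, and apply the assumed inequality at the base point $z$ to both $x$ and $y$:
\begin{align*}
f(x)&\geq f(z)+f'(z)(x-z),\\
f(y)&\geq f(z)+f'(z)(y-z).
\end{align*}
Multiplying the first by $\xi$ and the second by $1-\xi$ and adding, the decisive point is that $f'(z)\in E^*_{\varepsilon,\lambda}$ is a module homomorphism, hence $L^0(\mathcal{F})$-linear; therefore the right-hand side collapses to $f(z)+f'(z)\bigl(\xi(x-z)+(1-\xi)(y-z)\bigr)=f(z)+f'(z)(\theta)=f(z)$, giving $\xi f(x)+(1-\xi)f(y)\geq f(z)$, which is $L^0$-convexity.

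Part (2) follows the same pattern but with the strict subgradient inequality $f(y)>f(x)+f'(x)(y-x)$ on $(x\neq y)$, and now the exceptional set must be tracked. For the forward direction I would use the midpoint $m=\tfrac12 x+\tfrac12 y$: strict $L^0$-convexity (taking the constant $\lambda=\tfrac12$) gives $f(m)<\tfrac12 f(x)+\tfrac12 f(y)$ on $(x\neq y)$, while the non-strict subgradient inequality from (1) gives $f(m)\geq f(x)+\tfrac12 f'(x)(y-x)$ on all of $\Omega$; combining these on $(x\neq y)$ and clearing $\tfrac12$ yields the asserted strict inequality. For the converse I would again set $z=\lambda x+(1-\lambda)y$ with $0\leq\lambda\leq 1$ and apply the strict inequality at $z$; here the book-keeping is to observe that $x-z=(1-\lambda)(x-y)$ and $y-z=-\lambda(x-y)$, so on $(0<\lambda<1)\cap(x\neq y)$ one has $\bigvee_{\|\cdot\|\in\mathcal{P}}\|x-z\|=(1-\lambda)\bigvee_{\|\cdot\|\in\mathcal{P}}\|x-y\|>0$ and likewise for $y-z$, using the description $(x\neq y)=(\bigvee_{\|\cdot\|\in\mathcal{P}}\|x-y\|>0)$ from Remark~\ref{Remark 3.9}. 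Hence $(0<\lambda<1)\cap(x\neq y)\subseteq(x\neq z)\cap(y\neq z)$ and both strict inequalities are valid on this set; multiplying by the strictly positive scalars $\lambda$ and $1-\lambda$ (which preserves strictness $P$-a.s.) and adding, the same $L^0$-linearity collapse of $f'(z)$ produces $\lambda f(x)+(1-\lambda)f(y)>f(z)$ on $(0<\lambda<1)\cap(x\neq y)$, i.e.\ strict $L^0$-convexity.

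Part (3) is then immediate from (1): writing the subgradient inequality in both directions,
\begin{align*}
f(y)&\geq f(x)+f'(x)(y-x),\\
f(x)&\geq f(y)+f'(y)(x-y),
\end{align*}
and adding, the terms $f(x)+f(y)$ cancel and, using $f'(x)(y-x)=-f'(x)(x-y)$ (again $L^0$-linearity), one is left with $(f'(x)-f'(y))(x-y)\geq 0$.

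I expect the only genuine obstacle to be the measurable-set book-keeping in part (2): one must check that the two strict subgradient inequalities at $z$ really hold \emph{on the whole of} $(0<\lambda<1)\cap(x\neq y)$ — which is precisely where the identities $x-z=(1-\lambda)(x-y)$, $y-z=-\lambda(x-y)$ and the $\bigvee$-description of $(x\neq y)$ are needed — and that multiplying $L^0(\mathcal{F})$-valued strict inequalities by scalars in $L^0_{++}(\mathcal{F})$ and summing preserves strict inequality $P$-almost surely. Everything else reduces, via the $L^0$-linearity of the G\^ateaux derivatives, to the classical one-line computations.
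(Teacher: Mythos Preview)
Your proposal is correct and is exactly the classical argument the paper has in mind: the paper's own proof of Lemma~\ref{Lemma 3.13} consists only of the sentence ``Proofs are completely similar to those in classical cases (as shown in Proposition~5.4 and Proposition~5.5 of \cite[Chapter~I]{ET99}), so are omitted.'' Your write-up supplies precisely those omitted classical computations, transcribed to the $L^0$-module setting via the $L^0$-linearity of $f'(z)\in E^*_{\varepsilon,\lambda}$ and the lattice description of the set $(x\neq y)$ from Remark~\ref{Remark 3.9}; the measurable-set book-keeping you flag in part~(2) is handled correctly. (Note incidentally that the displayed inequality in item~(2) of the statement has $f'(x)(x-y)$ where $f'(x)(y-x)$ is clearly intended, as your argument confirms.)
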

\begin{proof}
Proofs are completely similar to those in classical cases $($as shown in Proposition 5.4 and Proposition 5.5 of \cite[Chapter I]{ET99} $)$, so are omitted.
\end{proof}

\begin{remark}\label{Remark 3.14}
In classical case, the converse of $(3)$ of Lemma \ref{Lemma 3.13} is also true, but in our current case, this is still open.
\end{remark}
\par
 Theorem \ref{Theorem 3.15} and \ref{Theorem 3.16} below are very interesting, whereas their proofs are also omitted since they are completely similar to proofs of Propositions 2.1 and 2.2 of \cite[Chapter II]{ET99}.

 \begin{theorem}\label{Theorem 3.15}
 Let $(E,\mathcal{P})$ and $G$ be the same as in Lemma \ref{Lemma 3.13}. If $f : G\to L^0(\mathcal{F})$ is a G\^{a}teaux--differentiable $L^0$--convex function with $f'$ $H$--continuous, then the following statements are equivalent to each other :
\begin{enumerate}[(1)]
\item $u\in G$ is such that $f(u)=\bigwedge \{f(v)~|~v\in G\}$;
\item $f'(u)(v-u)\geq 0$ for all $v\in G$;
\item $f'(v)(v-u)\geq 0$ for all $v\in G$.
\end{enumerate}
 \end{theorem}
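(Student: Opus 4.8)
The plan is to establish the cyclic chain of implications $(1) \Rightarrow (2) \Rightarrow (3) \Rightarrow (1)$, transcribing the classical argument of \cite[Chapter II, Propositions 2.1--2.2]{ET99} into the $L^0$--module setting. The two nonclassical ingredients I will need to supply are: the $L^0$--linearity (module homomorphism property) of each $f'(x)$, so that scalars in $L^0_{++}(\mathcal{F})$ pass through the derivative; and the fact that $L^0_+(\mathcal{F})$ is $\mathcal{T}_{\varepsilon,\lambda}$--closed, so that a.s. nonnegative quantities keep their sign under convergence in probability. Throughout I will use that $f$ is $L^0$--convex, so by Lemma \ref{Lemma 3.13}(1) the subgradient inequality $f(y) \geq f(x) + f'(x)(y-x)$ holds for all $x,y \in G$.

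For $(1) \Rightarrow (2)$, I would fix $v \in G$ and, for $\lambda \in L^0_{++}(\mathcal{F})$ with $\lambda \leq 1$, use $L^0$--convexity of $G$ to get $u + \lambda(v-u) = (1-\lambda)u + \lambda v \in G$. Since $f(u) = \bigwedge\{f(w) : w \in G\} \leq f(u+\lambda(v-u))$, the difference quotient $\frac{f(u+\lambda(v-u)) - f(u)}{\lambda}$ is nonnegative for every such $\lambda$. By Remark \ref{Remark 3.12} this quotient is nondecreasing in $\lambda$ and its a.s. limit as $\lambda \downarrow 0$ equals $f'(u)(v-u)$; being the infimum of a family of nonnegative elements, this limit satisfies $f'(u)(v-u) \geq 0$, which is $(2)$. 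The step $(2) \Rightarrow (3)$ is the purely algebraic Minty observation: Lemma \ref{Lemma 3.13}(3) gives monotonicity of $f'$, so $(f'(v)-f'(u))(v-u) \geq 0$ for all $v \in G$, whence $f'(v)(v-u) \geq f'(u)(v-u) \geq 0$ by $(2)$.

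The decisive step is $(3) \Rightarrow (1)$, which is where $H$--continuity of $f'$ enters. Fix $v \in G$ and, for $\lambda \in L^0_{++}(\mathcal{F})$ with $0 < \lambda \leq 1$, set $v_\lambda = (1-\lambda)u + \lambda v \in G$. Applying hypothesis $(3)$ to $v_\lambda$ gives $f'(v_\lambda)(v_\lambda - u) \geq 0$; since $v_\lambda - u = \lambda(v-u)$ and $f'(v_\lambda)$ is a module homomorphism, this reads $\lambda\, f'(v_\lambda)(v-u) \geq 0$, and dividing by $\lambda > 0$ yields $f'(v_\lambda)(v-u) \geq 0$. Now take the real scalars $\lambda = 1/n \downarrow 0$, so that $v_{1/n} \to u$ in $\mathcal{T}_{\varepsilon,\lambda}$; by $H$--continuity, $f'(v_{1/n})(v-u) \to f'(u)(v-u)$ in $\mathcal{T}_{\varepsilon,\lambda}$. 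Because each term lies in $L^0_+(\mathcal{F})$ and this set is $\mathcal{T}_{\varepsilon,\lambda}$--closed, the limit satisfies $f'(u)(v-u) \geq 0$. Finally, the subgradient inequality from Lemma \ref{Lemma 3.13}(1) gives $f(v) \geq f(u) + f'(u)(v-u) \geq f(u)$; as $v \in G$ was arbitrary, $f(u) = \bigwedge\{f(v) : v \in G\}$, which is $(1)$.

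I expect the main obstacle to be the limit passage in $(3) \Rightarrow (1)$: unlike the classical case, convergence here is in probability rather than pointwise, so preserving the inequality $f'(v_{1/n})(v-u) \geq 0$ in the limit must be justified by the $\mathcal{T}_{\varepsilon,\lambda}$--closedness of $L^0_+(\mathcal{F})$ (equivalently, by passing to an a.s.--convergent subsequence). One should also verify carefully that the sequence $\{v_{1/n}\}$ converges to $u$ in $\mathcal{T}_{\varepsilon,\lambda}$ and that $H$--continuity, being stated as sequential continuity along $\mathcal{T}_{\varepsilon,\lambda}$--convergent sequences, is exactly what licenses the convergence $f'(v_{1/n})(v-u) \to f'(u)(v-u)$; both points are routine but are the places where the random framework departs from \cite{ET99}.
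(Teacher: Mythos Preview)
Your proposal is correct and follows exactly the approach the paper indicates: the paper omits the proof of Theorem \ref{Theorem 3.15} entirely, stating only that it is ``completely similar to proofs of Propositions 2.1 and 2.2 of \cite[Chapter II]{ET99}'', and your cyclic argument $(1)\Rightarrow(2)\Rightarrow(3)\Rightarrow(1)$ is precisely that classical Minty-type proof, with the $L^0$--module adaptations (module-homomorphism property of $f'(x)$, $\mathcal{T}_{\varepsilon,\lambda}$--closedness of $L^0_+(\mathcal{F})$, and the sequential $H$--continuity along $v_{1/n}\to u$) correctly identified and handled.
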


\begin{theorem}\label{Theorem 3.16}
Let $(E,\mathcal{P})$ and $G$ be the same as in Theorem \ref{Theorem 3.15}. If $f_1:G\to L^0(\mathcal{F})$ is a G\^{a}teaux--differentiable $L^0$--convex function with $f'_1$ $H$--continuous, $f_2:G\to \bar{L}^0(\mathcal{F})$ a proper $L^0$--convex function and $f=f_1+f_2$, then the following statements are equivalent to each other :
\begin{enumerate}[(1)]
\item $u\in G$ is such that $f(u)=\bigwedge \{f(v)~|~v\in G\}$;
\item $f'_1(u)(v-u)+f_2(v)-f_2(u)\geq 0$ for all $v\in G$;
\item $f'_1(v)(v-u)+f_2(v)-f_2(u)\geq 0$ for all $v\in G$.
\end{enumerate}
\end{theorem}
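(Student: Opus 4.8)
The plan is to establish the two equivalences $(1)\Leftrightarrow(2)$ and $(2)\Leftrightarrow(3)$, which together yield the full equivalence; this mirrors the classical argument for Propositions 2.1 and 2.2 in \cite[Chapter II]{ET99}, but it must be carried out in the lattice $(\bar{L}^0(\mathcal{F}),\le)$ with a.s.\ inequalities and with all limits taken in $\mathcal{T}_{\varepsilon,\lambda}$. The four ingredients I would lean on are: the $L^0$--convexity of $G$ (so that $(1-\lambda)u+\lambda v=u+\lambda(v-u)\in G$ for every $\lambda\in L^0_{++}(\mathcal{F})$ with $0\le\lambda\le1$), the subgradient inequality for the $L^0$--convex G\^{a}teaux--differentiable part $f_1$ from Lemma \ref{Lemma 3.13}(1) and Remark \ref{Remark 3.12}, the monotonicity of $f_1'$ from Lemma \ref{Lemma 3.13}(3), and the $H$--continuity of $f_1'$ from Definition \ref{Definition 3.11}.

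For $(1)\Rightarrow(2)$ I would fix $v\in G$, set $w_n=u+\lambda_n(v-u)\in G$ for a sequence $\lambda_n\downarrow 0$ in $L^0_{++}(\mathcal{F})$, and combine minimality with the $L^0$--convexity of $f_2$, namely $f_2(w_n)\le(1-\lambda_n)f_2(u)+\lambda_n f_2(v)$, to obtain $f_1(w_n)-f_1(u)\ge\lambda_n\big(f_2(u)-f_2(v)\big)$. Dividing by $\lambda_n$ and letting $\lambda_n\downarrow0$, the G\^{a}teaux--differentiability of $f_1$ gives $f_1'(u)(v-u)\ge f_2(u)-f_2(v)$, which is exactly $(2)$. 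For the converse $(2)\Rightarrow(1)$ I would add the subgradient inequality $f_1(v)-f_1(u)\ge f_1'(u)(v-u)$ (Lemma \ref{Lemma 3.13}(1)) to $(2)$, obtaining $f_1(v)-f_1(u)\ge f_2(u)-f_2(v)$, i.e.\ $f(u)\le f(v)$ for every $v\in G$; hence $f(u)=\bigwedge\{f(v):v\in G\}$.

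For $(2)\Rightarrow(3)$, the monotonicity $\big(f_1'(v)-f_1'(u)\big)(v-u)\ge0$ from Lemma \ref{Lemma 3.13}(3) yields $f_1'(v)(v-u)\ge f_1'(u)(v-u)$, and adding $f_2(v)-f_2(u)$ to both sides carries $(2)$ into $(3)$. The only genuinely non--routine direction is the Minty step $(3)\Rightarrow(2)$: fixing $v\in G$ and applying $(3)$ at the point $w_n=u+\lambda_n(v-u)$ (so that $w_n-u=\lambda_n(v-u)$), then using $f_2(w_n)-f_2(u)\le\lambda_n\big(f_2(v)-f_2(u)\big)$ and dividing by $\lambda_n>0$, I would arrive at $f_1'(w_n)(v-u)+f_2(v)-f_2(u)\ge0$. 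Letting $\lambda_n\downarrow0$ makes $w_n\to u$ in $\mathcal{T}_{\varepsilon,\lambda}$ (since $\|w_n-u\|=\lambda_n\|v-u\|\to0$ a.s.), and here the $H$--continuity of $f_1'$ is precisely what permits passing to the limit $f_1'(w_n)(v-u)\to f_1'(u)(v-u)$ in $\mathcal{T}_{\varepsilon,\lambda}$, producing $(2)$.

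I expect the main obstacle to be exactly this limit passage in $(3)\Rightarrow(2)$, together with the extended--real--valued bookkeeping for $f_2$: since $f_2$ may take the value $+\infty$, the manipulations above must respect the conventions $0\cdot(\pm\infty)=0$ and $+\infty\pm(-\infty)=+\infty$, and one should either restrict attention to those $v$ with $f_2(v)\in L^0(\mathcal{F})$ or check that the infinite cases make the relevant inequalities trivially true. Apart from this, each step is a pointwise (a.s.) transcription of the classical computation, so the real content of the proof is the coordinated use of the G\^{a}teaux--derivative definition, Lemma \ref{Lemma 3.13}, and the $H$--continuity hypothesis.
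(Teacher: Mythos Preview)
Your proposal is correct and is exactly the adaptation of the classical Ekeland--T\'emam argument that the paper has in mind; indeed the paper omits the proof entirely, stating only that it is ``completely similar to proofs of Propositions 2.1 and 2.2 of \cite[Chapter II]{ET99}''. One cosmetic remark: since $(E,\mathcal{P})$ is a random locally convex module rather than an $RN$ module, the line $\|w_n-u\|=\lambda_n\|v-u\|\to0$ should be phrased in terms of each $L^0$--seminorm $\|\cdot\|_Q$ for finite $Q\subset\mathcal{P}$, but the $\mathcal{T}_{\varepsilon,\lambda}$--convergence $w_n\to u$ follows just the same.
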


\begin{corollary}\label{Corollary 3.17}[Proximity mappings]
Let $(E,(\cdot,\cdot))$ be a $\mathcal{T}_{\varepsilon,\lambda}$--complete random inner product module over $R$ with base $(\Omega,\mathcal{F},P)$. $f_1:E\to L^0(\mathcal{F})$ is defined by $f_1(u)=\frac{1}{2}\|u-x\|^2$ for all $u\in E(x$ is a fixed element of $E)$, $\varphi :E\to \bar{L}^0(\mathcal{F})$ is a proper, $\mathcal{T}_c$--lower semicontinuous and $L^0$--convex function and $f=f_1+ \varphi$. Then there exists a unique element $u\in E$ such that $f(u)=\bigwedge\{f(v)~|~v\in E\}$. This induces a mapping $Prox_{\varphi}:E\to E$ by $u=Prox_{\varphi}(x)$, called the proximity mapping with respect to $\varphi$.
\end{corollary}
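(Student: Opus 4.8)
The plan is to derive both existence and uniqueness of the minimizer $u$ by checking that $f = f_1 + \varphi$ meets every hypothesis of Theorem \ref{Theorem 3.8} taken with $G = E$, and then invoking the strict $L^0$--convexity clause of Remark \ref{Remark 3.9} for uniqueness. First I would record two structural facts about the ambient module: since $(E,(\cdot,\cdot))$ is a $\mathcal{T}_{\varepsilon,\lambda}$--complete random inner product module, it is in particular a $\mathcal{T}_{\varepsilon,\lambda}$--complete $RN$--module under $\|u\| = \sqrt{(u,u)}$, and by the random Riesz representation theorem for complete random inner product modules $E$ is random reflexive; moreover $G = E$ is trivially a nonempty $\mathcal{T}_{\varepsilon,\lambda}$--closed $L^0$--convex subset of $E$. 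This places us exactly in the setting of Theorem \ref{Theorem 3.8}.

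Next I would verify the properties of $f$. The quadratic $f_1(u) = \frac{1}{2}\|u-x\|^2$ is finite--valued, $\mathcal{T}_c$--continuous, and \emph{strictly} $L^0$--convex: the inner--product identity $\lambda f_1(u) + (1-\lambda)f_1(v) - f_1(\lambda u + (1-\lambda)v) = \frac{1}{2}\lambda(1-\lambda)\|u-v\|^2$ yields the defining strict inequality on $(0<\lambda<1)\cap(u\neq v)$. Since $\varphi$ is proper, $\mathcal{T}_c$--lower semicontinuous and $L^0$--convex by hypothesis, the sum $f=f_1+\varphi$ is proper, $\mathcal{T}_c$--lower semicontinuous (the properness of $\varphi$ ruling out $\infty-\infty$), $L^0$--convex and hence $L^0$--quasiconvex (its sublevel sets being $L^0$--convex), and it is \emph{strictly} $L^0$--convex because $f_1$ is. For stability I would invoke the general fact that any proper $L^0$--convex function $g$ is automatically stable: $L^0$--convexity with coefficient $\tilde{I}_A \in L^0_+(\mathcal{F})$ gives $g(\tilde{I}_A u + \tilde{I}_{A^c}v) \le \tilde{I}_A g(u) + \tilde{I}_{A^c}g(v)$, and applying it to the identity $u = \tilde{I}_A w + \tilde{I}_{A^c}u$ with $w = \tilde{I}_A u + \tilde{I}_{A^c}v$ gives, after multiplication by $\tilde{I}_A$, the reverse inequality $\tilde{I}_A g(u) \le \tilde{I}_A g(w)$; symmetrizing over $A^c$ produces equality, so $f_1$, $\varphi$ and $f$ are all stable.

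The one genuinely analytic point, and the step I expect to be the main obstacle, is coercivity of $f$ in the sense of Definition \ref{Definition 3.7}. Here I would use that a proper $\mathcal{T}_c$--lower semicontinuous $L^0$--convex function admits a continuous affine $L^0$--minorant: separating the $\mathcal{T}_c$--closed $L^0$--convex set $\mathrm{epi}(\varphi)$ from a point strictly beneath it (random Hahn--Banach separation) furnishes $g \in E^{*}$ and $\xi \in L^0(\mathcal{F})$ with $\varphi(u) \ge g(u) + \xi \ge -\|g\|\,\|u\| + \xi$ for all $u$. Combining this with $f_1(u) \ge \frac{1}{2}(\|u\|-\|x\|)^2$ bounds $f(u)$ below by a quadratic in $\|u\|$ with positive leading coefficient, so $f(u_n) \to +\infty$ almost surely on any $A$ on which $\|u_n\| \to +\infty$; thus $f$ is coercive. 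With all hypotheses of Theorem \ref{Theorem 3.8} verified, a minimizer $u \in E$ exists, and its uniqueness follows from the strict $L^0$--convexity of $f$ via Remark \ref{Remark 3.9}. Setting $u = Prox_{\varphi}(x)$ then delivers the asserted well--defined proximity mapping.
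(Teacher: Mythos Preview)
Your proposal is correct and follows essentially the same route as the paper: verify that $f=f_1+\varphi$ satisfies the hypotheses of Theorem \ref{Theorem 3.8} with $G=E$, obtain coercivity from an $L^0$--affine minorant of $\varphi$ combined with the quadratic growth of $f_1$, and deduce uniqueness from strict $L^0$--convexity via Remark \ref{Remark 3.9}. The only cosmetic differences are that the paper cites the Fenchel--Moreau duality theorem for the affine minorant while you invoke separation of the epigraph directly, and you make explicit the random reflexivity of $E$ and the stability of $f$, which the paper leaves implicit.
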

\begin{proof}
Since $f_1$ is strictly $L^0$--convex and $\mathcal{T}_c$--lower semicontinuous, it is obvious that $f$ is strictly $L^0$--convex, $\mathcal{T}_c$--lower semicontinuous and proper. Further, since Fenchel--Moreau duality theorem has been established in \cite[Theorem 5.5]{GZZ15a}, $\varphi$ is bounded from below by a $\mathcal{T}_c$--continuous $L^0$--affine function $L$, which can be written as $L(v)=(v,w_0)+\alpha$ for some $w_0\in E$ and $\alpha\in L^0(\mathcal{F})($for all $v\in E)$ by Riesz's representation theorem \cite[Theorem 4.3]{GY96}. Now, one can easily verify that $f$ is coercive as in \cite[P.39]{ET99}. Finally, taking $G=E$ in Theorem \ref{Theorem 3.8} and noticing Remark \ref{Remark 3.9} yields that there exists a unique $u\in E$ such that $f(u)=\bigwedge\{f(v)~|~v\in E\}$
\end{proof}

\par
\begin{remark}\label{Remark 3.18}
Theorem \ref{Theorem 3.16} can be used to characterize $u=Prox_{\varphi}(x)$ in the following two equivalent ways :
\begin{enumerate}[(1)]
\item $(u-x,v-u)+\varphi(v)-\varphi(u)\geq 0$ for all $v\in E$;
\item $(v-x,v-u)+\varphi(v)-\varphi(u)\geq 0$ for all $v\in E$;
\end{enumerate}
\par
In particular, when $\varphi=$ the indicator function $\mathcal{X}_G$ of a $\mathcal{T}_{\varepsilon,\lambda}$--closed $L^0$--convex subset $G$ of $E$, $(1)$ and $(2)$ above become $(3)$ and $(4)$ below, respectively:
\begin{enumerate}[(3)]
\item $u\in G$ and $(u-x,v-u)\geq 0$ for all $v\in G$;
\end{enumerate}
\begin{enumerate}[(4)]
\item $u\in G$ and $(v-x,v-u)\geq 0$ for all $v\in G$;
\end{enumerate}
Thus $u$ is just the projection of $x$ onto $G$.

Finally, according to the above $(1)$ or $(2)$ and the stability of $\varphi$, one can easily verify that $Prox_{\varphi}(\cdot) : E \to E$ also has the stability, namely, $Prox_{\varphi}(\tilde{I}_{A}x_1 + \tilde{I}_{A^c}x_2) = \tilde{I}_{A} Prox_{\varphi}(x_1) + \tilde{I}_{A_c} Prox_{\varphi}(x_2)$ for all $x_1, x_2 \in E$ and $A \in \mathcal{F}$, further by noticing that $E$ has the countable concatenation property, then one can also see that $Prox_{\varphi}(\cdot)$ even has the $\sigma$--stability!
\end{remark}

\section{The existence of solutions of a general variational inequality}\label{Section 4}
The main result of this section is Theorem \ref{Theorem 4.1} below, throughout this section $(E,\|\cdot\|)$ always denotes a given random reflexive random normed module over $R$ with base $(\Omega,\mathcal{F},P)$.

\begin{theorem}\label{Theorem 4.1}
Let $\varphi:E\to \bar{L}^0(\mathcal{F})$ be a proper, $\mathcal{T}_c$--lower semicontinuous and $L^0$--convex function such that $dom(\varphi):=\{x\in E~|~\varphi(x)<+\infty$ on $\Omega\}$ is $\mathcal{T}_{\varepsilon,\lambda}$--closed, $f\in E^*$ and $M:E\to E^*$ a mapping satisfying the following conditions:
\begin{enumerate}[(M-1)]
\item $M$ is stable, namely $M(\tilde{I}_Ax+\tilde{I}_{A^c}y)=\tilde{I}_AM(x)+\tilde{I}_{A^c}M(y)$ for all $x,y\in E$ and $A\in \mathcal{F}$;
\item $M$ is weakly sequentially continuous over any finitely generated submodules $V$ of $E$, namely, for any fixed $y\in E, \{M(x_n)(y),n\in N\}$ converges in $\mathcal{T}_{\varepsilon,\lambda}$ to $M(x_0)(y)$ $($remind: $\mathcal{T}_{\varepsilon,\lambda}$ on $L^0(\mathcal{F})$ is exactly the topology of convergence in probability$)$ whenever $\{x_n,n\in N\}$ in $V$ converges in $\mathcal{T}_{\varepsilon,\lambda}$ to $x_0\in V$;
\item $M$ is monotone, namely $(M(u)-M(v))(u-v)\geq 0$ for all $u,v\in E$;
\item There exists some $v_0\in dom(\varphi)$ such that $\{\frac{M(v_n)(v_n-v_0)+\varphi(v_n)}{\|v_n\|},n\in N\}$ converges almost surely to $+\infty$ on $A$ whenever $\{\|v_n\|,n\in N\}$ converges almost surely to $+\infty$ on some $A\in \mathcal{F}$ with $P(A)>0$.
\end{enumerate}
Then for any given $f\in E^*$ there exists at least one $u\in E$ such that the following is satisfied :
\begin{enumerate}[(4.1)]
\item $(M(u)-f)(v-u)+\varphi(v)-\varphi(u)\geq 0$ for all $v\in E$.
\end{enumerate}
\end{theorem}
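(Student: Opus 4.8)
The plan is to recast (4.1) into its Minty form and to solve the latter by $L^0$-convex compactness, the device that here replaces the weak compactness of the classical Brezis theorem. For $v\in E$ set $S(v)=\{u\in E ~|~ (M(u\mapsto v))\}$—more precisely $S(v)=\{u\in E ~|~ (M(v)-f)(v-u)+\varphi(v)-\varphi(u)\geq 0\}$. The map $u\mapsto (M(v)-f)(v-u)+\varphi(v)-\varphi(u)$ is $L^0$-concave (an $L^0$-affine term minus the $L^0$-convex $\varphi$) and, since $\varphi$ is $\mathcal{T}_c$-lower semicontinuous and hence $\mathcal{T}_{\varepsilon,\lambda}$-lower semicontinuous by Theorem \ref{Theorem 3.5}, is $\mathcal{T}_{\varepsilon,\lambda}$-upper semicontinuous; thus each $S(v)$ is a $\mathcal{T}_{\varepsilon,\lambda}$-closed $L^0$-convex set and the condition $u\in\bigcap_{v\in E}S(v)$ is exactly the Minty form of (4.1). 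That (4.1) is equivalent to this Minty form is the standard hemicontinuity argument: (M-3) gives (4.1)$\Rightarrow$Minty, and for the converse one tests the Minty inequality along $v_t=(1-t)u+tw$, uses the $L^0$-convexity of $\varphi$ to cancel a factor $t$, and lets $t\downarrow 0$, the continuity of $M(v_t)(w-u)$ being furnished by (M-2); testing at $v_0$ shows that any Minty solution lies in $dom(\varphi)$.

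I would then confine the search to a fixed almost surely bounded set. Testing the original inequality on a finitely generated submodule at $v_0$ yields $M(u)(u-v_0)+\varphi(u)\leq \|f\|(\|u\|+\|v_0\|)+\varphi(v_0)$; dividing by $\|u\|$ and applying the coercivity (M-4) (the unbounded case being reached along a sequence via Proposition \ref{Proposition 1.3}) produces a single $\xi\in L^0_{++}(\mathcal{F})$ with $\|u\|\leq\xi$ for every finite-dimensional solution $u$. The set $G_0=\{x\in E ~|~ \|x\|\leq\xi\}$ is $\mathcal{T}_{\varepsilon,\lambda}$-closed, $L^0$-convex and almost surely bounded, so it is $L^0$-convexly compact by Corollary \ref{Corollary 2.23}, here using that $E$ is random reflexive.

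The crux is then that $\{S(v)\cap G_0 ~|~ v\in E\}$ has the finite intersection property; granting this, Corollary \ref{Corollary 2.23} forces $\bigcap_{v\in E}S(v)\cap G_0\neq\emptyset$, and any $u$ in this intersection solves the Minty form, hence (4.1) by the equivalence above (and $u\in dom(\varphi)$ by testing at $v_0$). To verify the finite intersection property for $v_1,\dots,v_m$ I would solve the variational inequality on the finitely generated submodule $V$ generated by $v_0,v_1,\dots,v_m$; on such $V$ every almost surely bounded $\mathcal{T}_{\varepsilon,\lambda}$-closed $L^0$-convex set is $L^0$-convexly compact by Corollary \ref{Corollary 2.25}, and a random KKM/Hartman--Stampacchia argument, passing from the sets $\{u\in V ~|~ (M(u)-f)(v_j-u)+\varphi(v_j)-\varphi(u)\geq 0\}$ to $S(v_j)$ through the monotonicity (M-3), produces $u^{\ast}\in V$ solving the inequality on $V$. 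Then $u^{\ast}\in G_0$ by the coercivity bound and $u^{\ast}\in S(v_j)$ for each $j$, so $u^{\ast}\in\bigcap_{j} S(v_j)\cap G_0$, which is what is needed.

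I expect the finite, KKM-type solvability on finitely generated submodules to be the main obstacle: since $M$ need not be a gradient, the minimization result Theorem \ref{Theorem 3.8} does not apply, and one must establish a genuinely new finite-intersection (random Ky Fan/KKM) lemma for $L^0$-modules, which I take to be the role of the auxiliary lemma announced in the introduction. A subsidiary technical point arises if one instead passes to the limit through subnets of $L^0$-convex combinations and their countable concatenation hulls, as provided by Proposition \ref{Proposition 2.12} and Theorem \ref{Theorem 2.16}: the Minty inequality is preserved under $L^0$-convex combinations precisely because its left-hand side is $L^0$-concave in $u$, and it survives countable concatenation because both $M(v)-f\in E^{\ast}$ and the $\mathcal{T}_c$-lower semicontinuous $L^0$-convex $\varphi$ are $\sigma$-stable, so this route is available as an alternative to the finite-intersection formulation.
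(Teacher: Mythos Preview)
Your overall architecture---Minty linearization via Lemma \ref{Lemma 4.4}, an a.s.\ bound from coercivity (M-4), finite-dimensional approximation over finitely generated submodules containing $v_0$, and passage to the limit through $L^0$-convex compactness of $U(\xi_0)$---is exactly the paper's scheme. Your finite-intersection formulation with the sets $S(v)\cap G_0$ is equivalent to the paper's net argument: the paper produces $u_V\in U(\xi_0)$ for each finitely generated $V\ni v_0$, then uses Theorem \ref{Theorem 2.16} to extract a $\mathcal{T}_c$-limit $u$ and checks $u\in L(v)=\{w:\varphi(w)+(M(v)-f)(w)\le\varphi(v)+(M(v)-f)(v)\}$ for every $v$; these $L(v)$ are your $S(v)$, and the paper also relies on their $\mathcal{T}_c$-closedness, $L^0$-convexity and countable concatenation stability just as you indicate in your final paragraph.

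The one place where you diverge from the paper is the finite-dimensional step, which you leave as a ``random KKM/Hartman--Stampacchia argument'' to be supplied. The paper does \emph{not} use a KKM lemma. It proceeds in two stages. First (Lemma \ref{Lemma 4.3}), when $E$ is finitely generated \emph{and} $dom(\varphi)$ is a.s.\ bounded, one identifies $E$ with a closed submodule of $L^0(\mathcal{F},R^d)$ (hence $E\cong E^*$), observes that $u$ solves (4.1) iff $u=\mathrm{Prox}_\varphi(u+f-M(u))$, checks that $\mathrm{Prox}_\varphi$ is $1$-Lipschitz and stable (Remark \ref{Remark 3.18}), and applies the Brouwer fixed point theorem in $(L^0)^d$ of Drapeau--Karliczek--Kupper--Streckfu{\ss} (Proposition \ref{Proposition 4.2}) to the stable $\mathcal{T}_{\varepsilon,\lambda}$-continuous map $T(v)=\mathrm{Prox}_\varphi(v+f-M(v))$ on the a.s.\ bounded $L^0$-convex set $dom(\varphi)$. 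Second (Lemma \ref{Lemma 4.5}), the boundedness of $dom(\varphi)$ is removed by replacing $\varphi$ with $\varphi_r=\varphi+\mathcal{X}_{U(r)}$, testing the solution $u_r$ at $v_0$, and using (M-4) together with the stability trick (build $u^*_{r_n}$ by pasting) to force the family $\{u_r:r\ge\|v_0\|\}$ a.s.\ bounded; one then extracts a limit via $L^0$-convex compactness of $U(\xi_0)$ and the Minty form as in your outline. So the ``auxiliary lemma'' you anticipate is not a new KKM principle but the combination of the $(L^0)^d$-Brouwer theorem with the proximity mapping of Corollary \ref{Corollary 3.17}; this is the missing ingredient in your proposal.
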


\par
Before we give the proof of Theorem \ref{Theorem 4.1}, let us first recall: $E$ is random reflexive, it must be $\mathcal{T}_{\varepsilon,\lambda}$--complete and hence also have the countable concatenation property, from this one can easily see that $(M-1)$ also implies $M$ is both local $($namely $\tilde{I}_AM(v)=\tilde{I}_AM(\tilde{I}_Av)$ for all $v\in E$ and $A\in \mathcal{F})$ and $\sigma$--stable $($ namely $M(\Sigma^{\infty}_{n=1}\tilde{I}_{A_n}v_n)=\Sigma^{\infty}_{n=1}\tilde{I}_{A_n}M(v_n)$ for all sequence $\{v_n,n\in N\}$ and all countable partitions $\{A_n,n\in N\}$ of $\Omega$ to $\mathcal{F})$.
\par
Proof of Theorem \ref{Theorem 4.1} is more involved than its classical counterpart--Theorem 3.1 of \cite[Chapter II]{ET99}, thus it needs Lemma \ref{Lemma 4.3}, Lemma \ref{Lemma 4.4} and Lemma \ref{Lemma 4.5} below. Before the lemmas are given, let us restate a nice fixed point theorem duo to Drapeau.et.al's\cite{DKKS13} in a slightly more general form since $K$ in \cite[Proposition 3.1]{DKKS13} is exactly $\mathcal{T}_{\varepsilon,\lambda}$--closed and the mapping $f$ in \cite[Proposition 3.1]{DKKS13} is only needed to be $\mathcal{T}_{\varepsilon,\lambda}$--continuous , besides, since $K$ has the countable concatenation property, one can easily see that the stability of $f$ has also implied the $\sigma$--stability of $f$.

\begin{proposition}\label{Proposition 4.2}\cite[Proposition 3.1]{DKKS13}
Let $K$ be an $L^0$--convex, $\mathcal{T}_{\varepsilon,\lambda}$--closed and almost surely bounded subset of $L^0(\mathcal{F},R^d)$ $($or, $(L^0)^d)$ and $f: K\to K$ a stable and $\mathcal{T}_{\varepsilon,\lambda}$--continuous mapping. Then $f$ has a fixed point.
\end{proposition}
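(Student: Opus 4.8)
The plan is to reduce the random (conditional) fixed point problem to the classical Brouwer theorem applied fiberwise over $\Omega$, and then to recover a genuine $L^0$-valued fixed point by a measurable selection argument. Throughout I would use that $K$, being an $L^0$-convex and $\mathcal{T}_{\varepsilon,\lambda}$-closed subset of $L^0(\mathcal{F},R^d)$, automatically has the countable concatenation property, so that $f$ (being stable) is in fact $\sigma$-stable: $f(\sum_n \tilde{I}_{A_n} x_n) = \sum_n \tilde{I}_{A_n} f(x_n)$ for every countable partition $\{A_n\}$ of $\Omega$ to $\mathcal{F}$ and every sequence $\{x_n\}$ in $K$. Here $K$ is tacitly nonempty, which is what makes a fixed point meaningful.

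The first and most substantial step is to represent $f$ by a Carath\'eodory function. For a constant vector $c \in R^d$ (viewed as a constant random vector) set $\hat f(\omega,c) := f(c)(\omega)$, which is measurable in $\omega$ for each fixed $c$. By $\sigma$-stability, for any simple $x = \sum_n \tilde{I}_{A_n} c_n$ one gets $f(x)(\omega) = \hat f(\omega, x(\omega))$ a.s. To obtain continuity of $c \mapsto \hat f(\omega,c)$ for almost every fixed $\omega$, I would fix a countable dense subset $D \subseteq R^d$ and exploit that $c_n \to c$ in $R^d$ forces the corresponding constant random vectors to converge in $\mathcal{T}_{\varepsilon,\lambda}$, hence $f(c_n) \to f(c)$ in probability; extracting almost surely convergent subsequences and controlling the exceptional null sets simultaneously over $D$ yields a jointly measurable $\hat f$ that is continuous in its second variable off a single null set. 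Approximating an arbitrary $x \in L^0(\mathcal{F},R^d)$ almost surely by simple functions and using $\mathcal{T}_{\varepsilon,\lambda}$-continuity then extends the identity $f(x)(\omega) = \hat f(\omega, x(\omega))$ to all of $K$.

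In parallel I would represent $K$ itself as a random closed set: since $K$ has the countable concatenation property and is $\mathcal{T}_{\varepsilon,\lambda}$-closed, there is a measurable closed-valued multifunction $\omega \mapsto K(\omega) \subseteq R^d$ with $K = \{u \in L^0(\mathcal{F},R^d) : u(\omega) \in K(\omega) \text{ a.s.}\}$. The almost sure boundedness of $K$ gives $K(\omega) \subseteq \{c : |c| \leq \eta(\omega)\}$ for some $\eta \in L^0_+(\mathcal{F})$, and $L^0$-convexity (tested against deterministic convex combinations of approximating elements of $K$) makes each $K(\omega)$ convex; thus $K(\omega)$ is a nonempty compact convex subset of $R^d$ for a.e.\ $\omega$, and $\hat f(\omega,\cdot)$ maps $K(\omega)$ into itself by density and continuity. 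Applying the classical Brouwer theorem fiberwise, the set $\mathrm{Fix}(\omega) := \{c \in K(\omega) : \hat f(\omega,c) = c\}$ is nonempty and closed for a.e.\ $\omega$.

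The final step is the selection. The graph $\{(\omega,c) : c \in K(\omega),\ \hat f(\omega,c) - c = 0\}$ is $\mathcal{F} \otimes \mathcal{B}(R^d)$-measurable, being the intersection of the measurable graph of $K(\cdot)$ with the zero set of the jointly measurable map $(\omega,c) \mapsto \hat f(\omega,c) - c$; the correspondence $\omega \mapsto \mathrm{Fix}(\omega)$ therefore has closed nonempty values and measurable graph, so the Kuratowski--Ryll-Nardzewski (Aumann) measurable selection theorem furnishes a measurable $u:\Omega \to R^d$ with $u(\omega) \in \mathrm{Fix}(\omega)$ a.s. Its equivalence class lies in $K$ and satisfies $f([u])(\omega) = \hat f(\omega,u(\omega)) = u(\omega)$ a.s., i.e.\ $f([u]) = [u]$. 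I expect the main obstacle to be the Carath\'eodory representation in the second step---specifically, upgrading the global $\mathcal{T}_{\varepsilon,\lambda}$-continuity (convergence in probability) of $f$ to genuine fiberwise continuity of $\hat f(\omega,\cdot)$ off a common null set, since the subsequence argument must be organized so that a single exceptional set works uniformly over the dense set of directions $D$.
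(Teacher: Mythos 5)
The paper itself contains no proof of Proposition \ref{Proposition 4.2}: it is quoted from \cite[Proposition 3.1]{DKKS13} (with the remark that the apparently weaker hypotheses are automatic), and the proof in \cite{DKKS13} is intrinsic --- a local, measurable adaptation of the simplicial proof of Brouwer's theorem combined with extraction of almost surely convergent random subsequences, producing a fixed point that is ``measurable by construction''. That route was chosen there precisely to avoid the fiberwise reduction you attempt, so yours is a genuinely different strategy; as written, however, it has two gaps. The first is at the very start: $\hat f(\omega,c):=f(c)(\omega)$ does not parse, because $f$ is defined only on $K$, and $K$ need not contain any constant vector, nor any simple element $\sum_n\tilde{I}_{A_n}c_n$ (take $K=\{x_0\}$ with $x_0$ nonconstant, or a ball with random center). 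This is repairable: either define $\hat f$ on the graph of the measurable closed-valued map $\omega\mapsto K(\omega)$ via a Castaing representation $\{x_n\}$ of $K$, setting $\hat f(\omega,x_n(\omega)):=f(x_n)(\omega)$, which is consistent by stability; or first extend $f$ to all of $L^0(\mathcal{F},R^d)$ as $f\circ P_K$, where the projection $P_K$ onto the $\mathcal{T}_{\varepsilon,\lambda}$-closed $L^0$-convex set $K$ exists in the complete random inner product module $L^0(\mathcal{F},R^d)$, is stable and $L^0$-norm $1$-Lipschitz (Corollary \ref{Corollary 3.17}, Remark \ref{Remark 3.18}, and the estimate in the proof of Lemma \ref{Lemma 4.3}), and does not change the fixed point set.

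The second gap is the one you flag yourself, and the mechanism you sketch is structurally incapable of closing it, because continuity of $f$ tested only along sequences drawn from a countable family of fixed elements is strictly weaker than the hypothesis. Concretely, let $d=1$, $\Omega=[0,1]$ with Lebesgue measure, $K=\{x\in L^0(\mathcal{F}): 0\le x\le 1\}$, and let $N(x)$ be the equivalence class of $\omega\mapsto h(x^0(\omega)-\omega)$ with $h=I_{[0,\infty)}$. Then $N:K\to K$ is stable, and at every constant --- indeed at every simple element $x$ --- one has $N(x_n)\to N(x)$ in probability for \emph{every} sequence $x_n\to x$ in probability, since $\{\omega: x^0(\omega)=\omega\}$ is null for simple $x$; yet for a.e.\ $\omega$ the data $c\mapsto N(c)(\omega)=I_{\{\omega\le c\}}$ already jumps at $c=\omega$, so no version continuous in the second variable off a single null set can exist. (Of course $N$ violates the full hypothesis --- it is discontinuous at the identity map --- consistent with the proposition; the point is that your scheme never evaluates $f$ at any point that could detect this, so the information it uses is compatible simultaneously with the hypothesis and with fiberwise discontinuity.) Any correct proof must therefore invoke continuity of $f$ at points built by \emph{measurable gluing}: if the fiberwise modulus of continuity fails on some $A$ with $P(A)>0$, one measurably selects indices and forms $u_k=\sum_l\tilde{I}_{(n_k=l)}x_l\in K$ and $v_k$ likewise (legitimate since $K$ has the countable concatenation property and the stable $f$ is $\sigma$-stable), with $|u_k-v_k|\le 1/k$ on $A$ but $|f(u_k)-f(v_k)|\ge\eta$ on $A$, and then the randomized Bolzano--Weierstrass theorem (cf.\ Proposition \ref{Proposition 2.26}) extracts stable mixed subsequences converging a.s.\ to a common limit in $K$, so that sequential $\mathcal{T}_{\varepsilon,\lambda}$-continuity plus locality yield a contradiction. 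This $\sigma$-stable argument is the real content of the theorem --- the part \cite{DKKS13} achieve by their simplicial construction; your closing steps (fiberwise Brouwer, measurability of $\omega\mapsto\{c\in K(\omega):\hat f(\omega,c)=c\}$, Kuratowski--Ryll-Nardzewski selection) are standard and correct once the Carath\'eodory representation is secured, but as submitted the hard half of the proof is missing.
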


\begin{lemma}\label{Lemma 4.3}
Theorem \ref{Theorem 4.1} is true if $E$ is finitely generated and $dom(\varphi)$ is almost surely bounded.
\end{lemma}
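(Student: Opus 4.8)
The plan is to recast the inequality (4.1) as a fixed point problem for a proximity--type mapping and then invoke the random Brouwer theorem, Proposition \ref{Proposition 4.2}. First I would use Corollary \ref{Corollary 2.25} to realize $E$ as a $\mathcal{T}_{\varepsilon,\lambda}$--closed submodule of $L^0(\mathcal{F},R^n)$ and endow $E$ with the $L^0$--inner product inherited from $L^0(\mathcal{F},R^n)$. Since this change of geometry does not alter the $(\varepsilon,\lambda)$--topology, it leaves $E^*$ and the maps $M,f$ unaffected, while the $L^0$--convexity of $\varphi$ is intrinsic and its $\mathcal{T}_c$--lower semicontinuity is retained via the coincidence of $\mathcal{T}_c$-- and $\mathcal{T}_{\varepsilon,\lambda}$--lower semicontinuity for proper local $L^0$--convex functions (Theorem \ref{Theorem 3.5}, applicable since $E$ has the countable concatenation property). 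Thus $(E,(\cdot,\cdot))$ is a $\mathcal{T}_{\varepsilon,\lambda}$--complete random inner product module, and by Riesz's representation theorem \cite{GY96} each $M(u)$ and $f$ has a unique representative $\widetilde{M}(u),\widetilde{f}\in E$ with $M(u)(v)=(v,\widetilde{M}(u))$ and $f(v)=(v,\widetilde{f})$, so (4.1) becomes $(\widetilde{M}(u)-\widetilde{f},v-u)+\varphi(v)-\varphi(u)\geq 0$ for all $v\in E$.

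Next I would define $T:E\to E$ by $T(u)=Prox_{\varphi}(u-\widetilde{M}(u)+\widetilde{f})$, where $Prox_{\varphi}$ is the proximity mapping provided by Corollary \ref{Corollary 3.17}. By the characterization in Remark \ref{Remark 3.18}, $u$ is a fixed point of $T$ precisely when $(\widetilde{M}(u)-\widetilde{f},v-u)+\varphi(v)-\varphi(u)\geq 0$ holds for all $v$, that is, exactly when $u$ solves (4.1); hence it suffices to produce a fixed point of $T$. Since any value $Prox_{\varphi}(x)$ minimizes $\frac{1}{2}\|\cdot-x\|^2+\varphi(\cdot)$ and therefore has finite $\varphi$, the range of $T$ lies in $C:=dom(\varphi)$, which by hypothesis is $L^0$--convex, $\mathcal{T}_{\varepsilon,\lambda}$--closed and almost surely bounded; thus $T$ maps $C$ into itself. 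Note that here the almost sure boundedness of $dom(\varphi)$ plays the role of coercivity, so that conditions (M-3) and (M-4) are not needed for this finitely generated base case.

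It then remains to verify that $T$ satisfies the hypotheses of Proposition \ref{Proposition 4.2} on $C$. Stability of $T$ follows from (M-1): stability of $M$ passes to $\widetilde{M}$ by uniqueness of Riesz representatives and the $L^0$--bilinearity of $(\cdot,\cdot)$, while $Prox_{\varphi}$ is stable by Remark \ref{Remark 3.18}, so the composition is stable. For $\mathcal{T}_{\varepsilon,\lambda}$--continuity I would first show $Prox_{\varphi}$ is nonexpansive: testing the characterizing inequalities of $u_1=Prox_{\varphi}(x_1)$ and $u_2=Prox_{\varphi}(x_2)$ against each other and applying the $L^0$--Cauchy--Schwarz inequality gives $\|u_1-u_2\|\leq\|x_1-x_2\|$, whence $Prox_{\varphi}$ preserves convergence in probability. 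Consequently $T$ is continuous as soon as $\widetilde{M}$ is.

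The main obstacle is precisely the continuity of $\widetilde{M}$: condition (M-2) only furnishes the weak sequential continuity $M(x_k)(y)\to M(x_0)(y)$ in $\mathcal{T}_{\varepsilon,\lambda}$ for every $y$, whereas $T$ requires the strong convergence $\|\widetilde{M}(x_k)-\widetilde{M}(x_0)\|\to 0$. The plan is to exploit the finitely generated (randomly finite--dimensional by Proposition \ref{Proposition 2.24}) structure: writing $M(x_k)(y)=(y,\widetilde{M}(x_k))$ and testing against the finitely many coordinate generators of $E\subseteq L^0(\mathcal{F},R^n)$ shows that each coordinate of $\widetilde{M}(x_k)-\widetilde{M}(x_0)$ tends to $0$ in probability, and summing the finitely many squared coordinates yields $\|\widetilde{M}(x_k)-\widetilde{M}(x_0)\|\to 0$; in other words, in this setting weak sequential convergence forces strong convergence. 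With $\widetilde{M}$ stable and continuous, $T:C\to C$ is a stable, $\mathcal{T}_{\varepsilon,\lambda}$--continuous self--map of an $L^0$--convex, $\mathcal{T}_{\varepsilon,\lambda}$--closed, almost surely bounded subset of $L^0(\mathcal{F},R^n)$, so Proposition \ref{Proposition 4.2} furnishes a fixed point $u\in C$, which is the desired solution of (4.1).
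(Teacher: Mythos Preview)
Your proposal is correct and follows essentially the same approach as the paper: embed $E$ into $L^0(\mathcal{F},R^n)$ via Corollary \ref{Corollary 2.25}, identify $E$ with $E^*$ through the inherited $L^0$--inner product, rewrite (4.1) as the fixed point equation $u=Prox_{\varphi}(u+f-M(u))$, verify that this map is stable and $\mathcal{T}_{\varepsilon,\lambda}$--continuous on the $L^0$--convex, closed, a.s.\ bounded set $dom(\varphi)$, and apply Proposition \ref{Proposition 4.2}. You in fact spell out more carefully than the paper why weak sequential continuity (M-2) upgrades to strong continuity in the finitely generated setting and why the change of geometry does not disturb the $\mathcal{T}_c$--lower semicontinuity of $\varphi$.
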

\begin{proof}
Since $E$ is finitely generated, Corollary \ref{Corollary 2.25} guarantees that $(E,\|\cdot\|)$ can be identified with a $\mathcal{T}_{\varepsilon,\lambda}$--closed submodule of $\mathcal{T}_{\varepsilon,\lambda}$--complete random inner product module $( L^0(\mathcal{F},R^d), (\cdot,\cdot) )$ $($for some positive integer $d)$ in  the sense of topological module isomorphism, in particalar $E$ and $E^*$ are identified. Then if $u$ is a solution for $(4.1)$, we have: $(M(u)-f,v-u)+\varphi(v)-\varphi(u)\geq 0$ for all $v\in E$, or equivalently, $(u-(u+f-M(u)),v-u)+\varphi(v)-\varphi(u)\geq 0$ for all $v\in E$, which amounts to saying $u=prox_{\varphi}(u+f-M(u))$.
\par
Since $E\subset L^0(\mathcal{F},R^d)$, it is easy to see that $M$ is continuous from $(E,\mathcal{T}_{\varepsilon,\lambda})$ to $(E,\mathcal{T}_{\varepsilon,\lambda})$, namely $\mathcal{T}_{\varepsilon,\lambda}$--continuous, so is the mapping sending $u\in E$ to $u+f-M(u)$.
\par
Now, we prove that $prox_{\varphi}(\cdot): E\to dom(\varphi)\subset E$ is $\mathcal{T}_{\varepsilon,\lambda}$--continuous $($moreover, Lipschitz continuous$)$: in fact, let $x_1$ and $x_2$ belong to $E$ and $u_1=prox_{\varphi}(x_1)$ and $u_2=prox_{\varphi}(x_2)$, then by the property given in Remark \ref{Remark 3.18} one can have: $(u_1-x_1,u_2-u_1)+\varphi(u_2)-\varphi(u_1)\geq 0$ and $(u_2-x_2,u_1-u_2)+\varphi(u_1)-\varphi(u_2)\geq 0$, and by addition: $\|u_1-u_2\|^2\leq (x_1-x_2,u_1-u_2)\leq \|x_1-x_2\|\|u_1-u_2\|$, namely $\|u_1-u_2\|\leq \|x_1-x_2\|$.
\par
Thus the composite mapping $T: dom(\varphi)\to dom(\varphi)$ given by $T(v)=prox_{\varphi}(v+f-M(v))$ for each $v\in dom(\varphi)$ is $\mathcal{T}_{\varepsilon,\lambda}$--continuous. Further, since both $M$ and $Prox_{\varphi}(\cdot)$ are stable, $T$ is also stable, then by Proposition \ref{Proposition 4.2} there exists $u\in dom(\varphi)$ such that $u=prox_{\varphi}(u+f-M(u))$, $u$ is just desired.
\end{proof}

\par
The case for $L^0$--modules is much more complicated than that for ordinary linear spaces, to overcome the complications we prove the following key lemma which is freguently employed in the process of Proof of Theorem \ref{Theorem 4.1}.

\begin{lemma}\label{Lemma 4.4}
Let $E, M, f$ and $\varphi$ be the same as in Theorem \ref{Theorem 4.1}. If $G$ is an $L^0$--convex subset of $E$ and $u\in G$, then the following two conditions are equivalent to each other:
\begin{enumerate}[(i)]
\item $(M(u)-f)(v-u)+\varphi(v)-\varphi(u)\geq 0$ for all $v\in G$.
\item $(M(v)-f)(v-u)+\varphi(v)-\varphi(u)\geq 0$ for all $v\in G$.
\end{enumerate}
\end{lemma}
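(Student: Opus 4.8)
The plan is to prove the two implications separately. The implication (i)$\Rightarrow$(ii) is immediate from monotonicity, whereas (ii)$\Rightarrow$(i) is the genuine \emph{Minty trick} and is where the specific hypotheses on $M$ are used.

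For (i)$\Rightarrow$(ii) I would simply invoke the monotonicity condition (M-3). Fixing $v\in G$, condition (M-3) gives $(M(u)-M(v))(u-v)\geq 0$, equivalently $M(v)(v-u)\geq M(u)(v-u)$ in $(L^0(\mathcal{F}),\leq)$. Adding this inequality to (i) yields $(M(v)-f)(v-u)+\varphi(v)-\varphi(u)\geq (M(u)-f)(v-u)+\varphi(v)-\varphi(u)\geq 0$, which is exactly (ii). No convexity or continuity of $M$ is needed in this direction.

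For (ii)$\Rightarrow$(i) I would run the convex-combination argument, taking care that all homotopy scalars are \emph{deterministic} so that the final division and limit stay unproblematic. Fix $v\in G$; on the set $\{\varphi(v)=+\infty\}$ the inequality (i) holds trivially under the conventions adopted in Definition \ref{Definition 3.1}, so the substantive content is on $dom(\varphi)$. For a real $t\in(0,1]$ set $v_t:=(1-t)u+tv$, which lies in $G$ because $L^0$--convexity in particular forces ordinary convexity. Applying (ii) to $v_t$, using that $M(v_t)-f\in E^*$ is $L^0$--linear together with $v_t-u=t(v-u)$, and invoking the $L^0$--convexity bound $\varphi(v_t)-\varphi(u)\leq t(\varphi(v)-\varphi(u))$, I obtain $t\bigl[(M(v_t)-f)(v-u)+\varphi(v)-\varphi(u)\bigr]\geq 0$. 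Dividing by the positive real $t$ gives $(M(v_t)-f)(v-u)+\varphi(v)-\varphi(u)\geq 0$ for every $t\in(0,1]$. Finally I would let $t=1/n\downarrow 0$: the sequence $\{v_{1/n}\}$ lies in the finitely generated submodule $V:=span_{L^0}\{u,v\}$ and converges to $u$ in $\mathcal{T}_{\varepsilon,\lambda}$ since $\|v_{1/n}-u\|=(1/n)\|v-u\|\to 0$. This is precisely the situation covered by (M-2), so $M(v_{1/n})(v-u)\to M(u)(v-u)$ in probability; passing to the limit (and using that the relation $\geq 0$ is preserved under convergence in probability) produces (i).

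I expect the main obstacle to be exactly this limit step, and it is where the $L^0$--module setting departs from the classical proof: $M$ is not assumed continuous on all of $E$, only weakly sequentially continuous along sequences inside finitely generated submodules. The decisive observation is that the entire Minty construction $v_t=(1-t)u+tv$ stays inside the two--generated submodule $span_{L^0}\{u,v\}$, so that (M-2) applies verbatim; keeping the parameter $t$ deterministic rather than an element of $L^0_{++}(\mathcal{F})$ is what keeps both the division by $t$ and the sequential passage to the limit clean.
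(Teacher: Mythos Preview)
Your proof is correct and follows essentially the same route as the paper's: monotonicity (M-3) for (i)$\Rightarrow$(ii), and for (ii)$\Rightarrow$(i) the Minty convex-combination trick followed by (M-2) applied to the submodule generated by $\{u,v\}$. The only cosmetic difference is that the paper takes the interpolation parameter $\lambda$ in $L^0_{++}(\mathcal{F})$ with $0<\lambda<1$ on $\Omega$ rather than a deterministic $t\in(0,1]$; since division by such $\lambda$ is unproblematic in $L^0$ and (M-2) handles either sequence, your deterministic choice is a harmless simplification.
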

\begin{proof}
$(i)\Rightarrow(ii)$ is clear by the monotone property of $M$.
\par
Now, if $(ii)$ holds, let $v=(1-\lambda)u+\lambda w$ for any $w\in G$ and $\lambda \in L^0_{++}(\mathcal{F})$ such that $0<\lambda <1$ on $\Omega$ $($namely $P\{\omega\in\Omega~|~0<\lambda(\omega) <1\}=1)$, then $v\in G$ and $(M((1-\lambda)u+\lambda w)-f)(\lambda w-\lambda u)+\varphi((1-\lambda)u+\lambda w)-\varphi(u)\geq 0$, the $L^0$--convexity of $\varphi$ yields that $\lambda (M((1-\lambda)u+\lambda w)-f)(w-u)+\lambda (\varphi (w)-\varphi(u))\geq 0$, namely $(M((1-\lambda)u+\lambda w)-f)(w-u)+\varphi (w)-\varphi(u)\geq 0$. Arbitrarily taking a sequence $\{\lambda_n,n\in N\}$ such that $0<\lambda_n<1$ on $\Omega$ and $\lambda_n\downarrow 0$, then $(M((1-\lambda_n)u+\lambda_n w)-f)(w-u)+\varphi (w)-\varphi(u)\geq 0$. By applying $(M-2)$ to the submodule generated by $\{u,w\}$, one can have that $(M(u)-f)(w-u)+\varphi(w)-\varphi(u)\geq 0$. Since $w$ is arbitrary, $(i)$ holds.
\end{proof}

\begin{lemma}\label{Lemma 4.5}
Theorem \ref{Theorem 4.1} is valid if $E$ is finitely generated.
\end{lemma}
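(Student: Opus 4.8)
The plan is to reduce Lemma \ref{Lemma 4.5} to the bounded case already settled in Lemma \ref{Lemma 4.3} by truncating $\varphi$ with indicator functions of closed $L^0$--balls, and then to recover a global solution by two countable concatenation (gluing) steps, with the coercivity condition (M-4) doing the essential work. First, after translating by the point $v_0$ supplied by (M-4) I may assume $v_0=\theta\in dom(\varphi)$ (this preserves (M-1)--(M-4)). For each $n\in N$ put $\bar{B}_n:=\{x\in E~|~\|x\|\leq n\}$ (with $n$ the constant in $L^0_+(\mathcal{F})$) and $\varphi_n:=\varphi+\mathcal{X}_{\bar{B}_n}$, where $\mathcal{X}_{\bar{B}_n}$ is the indicator function introduced before Lemma \ref{Lemma 3.4}; since $\theta\in\bar{B}_n$ this is well defined. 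Then $\varphi_n$ is proper, $\mathcal{T}_c$--lower semicontinuous and $L^0$--convex, $dom(\varphi_n)=dom(\varphi)\cap\bar{B}_n$ is $\mathcal{T}_{\varepsilon,\lambda}$--closed and almost surely bounded, and because $\varphi_n\geq\varphi$ pointwise the coercivity (M-4) transfers verbatim from $\varphi$ to $\varphi_n$. Hence Lemma \ref{Lemma 4.3} produces $u_n\in dom(\varphi)\cap\bar{B}_n$ with $(M(u_n)-f)(v-u_n)+\varphi(v)-\varphi(u_n)\geq 0$ for all $v\in\bar{B}_n$.

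The step I expect to be the main obstacle is to prove that $\{u_n\}$ is almost surely bounded, i.e. $\zeta:=\bigvee_n\|u_n\|\in L^0_+(\mathcal{F})$. Testing the $n$--th inequality with $v=\theta$ gives $M(u_n)(u_n)+\varphi(u_n)\leq\varphi(\theta)+\|f\|\,\|u_n\|$ for every $n$. Classically this together with coercivity bounds $\|u_n\|$ at once; here the difficulty is genuinely probabilistic, since the deterministic index $n$ does not guarantee that $\{\|u_n\|\}$ diverges on any fixed set of positive probability even when it fails to be almost surely bounded. I would resolve this by concatenation: assuming $P(A)>0$ for $A:=(\zeta=+\infty)=\bigcap_j\bigcup_n(\|u_n\|>j)$, for each $j$ disjointify $A=\bigsqcup_n A_{j,n}$ with $A_{j,n}\subset(\|u_n\|>j)$ and set $w_j:=\sum_n\tilde{I}_{A_{j,n}}u_n\in E$ (using that $E$ has the countable concatenation property). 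Then $\|w_j\|>j$ on $A$, while the locality and $\sigma$--stability of $M$ and $\varphi$ turn the per--index bounds into $M(w_j)(w_j)+\varphi(w_j)\leq\varphi(\theta)+\|f\|\,\|w_j\|$ on $A$. Dividing by $\|w_j\|$ and letting $j\to\infty$ drives the right--hand quotient to $\|f\|<+\infty$ on $A$, contradicting (M-4), which forces it to $+\infty$ on $A$; hence $\zeta\in L^0_+(\mathcal{F})$.

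It remains to glue the truncated solutions into a global one. Partition $\Omega$ into $C_k:=(k-1\leq\zeta<k)$, $k\in N$; on $C_k$ one has $\|u_k\|\leq\zeta<k$, so $u_k$ is strictly $L^0$--interior to $\bar{B}_k$ there. Fixing $v\in E$ and choosing $\lambda\in L^0_{++}(\mathcal{F})$ with $0<\lambda\leq 1$ small enough on $C_k$ that $v':=\tilde{I}_{C_k}(u_k+\lambda(v-u_k))+\tilde{I}_{C_k^c}\theta$ still lies in $\bar{B}_k$, I would feed $v'$ into the $k$--th inequality, multiply by $\tilde{I}_{C_k}$, and use the $L^0$--convexity of $\varphi$ and the $L^0$--linearity of $M(u_k)$; cancelling the almost surely positive factor $\tilde{I}_{C_k}\lambda$ yields $\tilde{I}_{C_k}[(M(u_k)-f)(v-u_k)+\varphi(v)-\varphi(u_k)]\geq 0$ for every $v\in E$. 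Setting $u:=\sum_k\tilde{I}_{C_k}u_k\in E$, the locality and $\sigma$--stability of $M$ and $\varphi$ give $\tilde{I}_{C_k}[(M(u)-f)(v-u)+\varphi(v)-\varphi(u)]=\tilde{I}_{C_k}[(M(u_k)-f)(v-u_k)+\varphi(v)-\varphi(u_k)]\geq 0$ for each $k$, and summing over the partition $\{C_k\}$ delivers $(M(u)-f)(v-u)+\varphi(v)-\varphi(u)\geq 0$ for all $v\in E$, which is the required inequality; here $u\in dom(\varphi)$ because $\tilde{I}_{C_k}\varphi(u)=\tilde{I}_{C_k}\varphi(u_k)<+\infty$ for each $k$. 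The interior--point manipulation is the $L^0$--analogue of the classical segment argument (cf. Remark \ref{Remark 3.18}), so it is routine once the almost sure boundedness of $\{u_n\}$ is secured.
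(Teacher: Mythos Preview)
Your argument is correct, but it follows a genuinely different route from the paper's. The paper truncates with $\varphi_r=\varphi+\mathcal{X}_{U(r)}$ for $r\in L^0_{++}(\mathcal{F})$, obtains the net $\{u_r,\,r\geq\|v_0\|\}$, proves almost sure boundedness via the \emph{finite}--partition technique of Theorem~\ref{Theorem 3.8} (building $u^*_{r_n}=\sum_{k=1}^n\tilde I_{B_k}u_{r_k}$), and then invokes the $L^0$--convex compactness of $U(\xi_0)$ together with Theorem~\ref{Theorem 2.16} to extract a $\mathcal{T}_c$--limit $u$; the passage to the limit in the inequality is done through the Minty linearization Lemma~\ref{Lemma 4.4} and the closed $L^0$--convex sets $L(v)$. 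By contrast, you work with a countable family $\{u_n\}$, establish almost sure boundedness by a \emph{countable} concatenation $w_j=\sum_n\tilde I_{A_{j,n}}u_n$, and then bypass compactness and Lemma~\ref{Lemma 4.4} altogether: the strict interiority $\|u_k\|<k$ on $C_k$ lets you run the segment argument to upgrade the $k$--th inequality from $\bar B_k$ to all of $E$ on $C_k$, after which the solution $u=\sum_k\tilde I_{C_k}u_k$ is obtained by $\sigma$--stable gluing rather than as a limit. Your approach is more elementary in that it avoids the $L^0$--convex compactness machinery and Lemma~\ref{Lemma 4.4}, at the price of leaning harder on the countable concatenation property of $E$ and the $\sigma$--stability of $M$ and $\varphi$ (both available here, as you note). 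The paper's approach, on the other hand, stays closer to the classical Brezis proof and sets up exactly the tools (Lemma~\ref{Lemma 4.4}, Theorem~\ref{Theorem 2.16}) that are reused verbatim in the proof of Theorem~\ref{Theorem 4.1} for general $E$; your gluing shortcut would not survive that passage, since there one must work over all finitely generated submodules rather than a single countable chain.
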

\begin{proof}
For any $r\in L^0_{++}(\mathcal{F})$, let $\varphi_r=\varphi+\mathcal{X}_{U(r)}$, where $U(r)=\{u\in E~|~\|u\|\leq r\}$ and $\mathcal{X}_{U(r)}$ is the indicator function of the $\mathcal{T}_{\varepsilon,\lambda}$--closed $L^0$--convex subset $U(r)$, see the comments following Lemma \ref{Lemma 3.4} for the notion of an indicator function. Since $dom(\varphi_r)=dom(\varphi)\cap U(r), \varphi_r$ is $L^0$--convex, proper and $\mathcal{T}_c$--lower semicontinuous for $r$ sufficiently large.

\par
Lemma \ref{Lemma 4.3} implies the existence of $u_r$ such that $\|u_r\|\leq r$ and satisfies the following:
\begin{enumerate}[(1)]
\item $(M(u_r)-f,v-u_r)+\varphi_r(v)-\varphi_r(u_r)\geq 0,\forall v\in E$.
\end{enumerate}

\par
Now, for $r$ sufficiently large such that $r\geq \|v_0\|$, we can put $v=v_0$ in $(1)$ and find that $(1)$ becomes the following:
\begin{enumerate}[(2)]
\item $(M(u_r)-f,v_0-u_r)+\varphi(v_0)-\varphi(u_r)\geq 0$.
\end{enumerate}
\par
At the present time, we can assert that the net $\{u_r,r\geq\|v_0\|\}$ is alomst surely bounded. Otherwise, there exist some $A\in\mathcal{F}$ of positive probability and a sequence $\{r_n,n\in N\}$ such that $r_n\in L^0_{++}(\mathcal{F})$ and $r_n\geq\|v_0\|$ for each $n\in N$ and such that $\bigvee_{n\geq 1}\|u_{r_n}\|=\bigvee_{r\geq\|v_0\|}\|u_r\|=+\infty$ on $A$. By the same technique as in Proof of Theorem \ref{Theorem 3.8}, there exists a sequence $\{u^*_{r_n},n\in N\}$ in $E$ such that the following are satisfied: \begin{enumerate}[(3)]
\item for each $n\in N$ there exists a finite partition $\{B_k,1\leq k\leq n\}$ of $\Omega$ to $\mathcal{F}$ such that $u^*_{r_n}=\sum^n_{k=1}\tilde{I}_{B_k}\cdot u_{r_k}$;
\end{enumerate}
\begin{enumerate}[(4)]
\item $\|u^*_{r_n}\|=\bigvee^n_{k=1}\|u_{r_k}\|$ for each $n\in N$.
\end{enumerate}
\par
Since $(2)$ holds for each $r=r_n$ and both $M$ and $\varphi$ are stable, $(2)$ still holds for each $u^*_{r_n}$, namely $(M(u^*_{r_n})-f,v_0-u^*_{r_n})+\varphi(v_0)-\varphi(u^*_{r_n})\geq 0$ for each $n\in N$, then one can see:
\begin{enumerate}[(5)]
\item $\frac{1}{\|u^*_{r_n}\|}\{(M(u^*_{r_n}),u^*_{r_n}-v_0)+\varphi(u^*_{r_n})\}\leq \frac{1}{\|u^*_{r_n}\|}\{-(f,v_0-u^*_{r_n})+\varphi(v_0)\}$ on $A$.
\end{enumerate}
\par
Since the right--hand side of $(5)\leq \frac{|f(v_0)-\varphi(v_0)|}{\|u^*_{r_n}\|}+\|f\|$ on $A$, when $n$ tends to $\infty$ , $\frac{|f(v_0)-\varphi(v_0)|}{\|u^*_{r_n}\|}+\|f\|$ tends to $\|f\|$ on $A$, but the left--hand side of $(5)$ tends to $+\infty$ on $A$ as $n$ tends to $\infty$ by the condition $(M-4)$, which is a contradiction.
\par
Now that we have proved $\{u_r,r\geq\|v_0\|\}$ to be almost surely bounded, there exists $\xi_0\in L^0_{++}(\mathcal{F})$ such that $\|u_r\|\leq\xi_0$ for all $r\geq\|v_0\|$. We can, without loss of generality, assume that $\xi_0\geq\|v_0\|$. Observing that $\mathcal{U}=\{U(\frac{1}{r})~|~r\in L^0_{++}(\mathcal{F})$ and $r\geq\xi_0\}$ is a local base at $\theta$ of $\mathcal{T}_c$, by the $L^0$--convex compactness of $U(\xi_0)$ and the reasoning used in Theorem \ref{Theorem 2.16} there exists a net $\{y_r,r\geq\xi_0\}$ in $U(\xi_0)$ convergent in $\mathcal{T}_c$ to some $u\in U(\xi_0)$ such that :
\begin{enumerate}[(6)]
\item $y_r\in(u+U(\frac{1}{r}))\cap H_{cc}(conv_{L^0}\{u_{r'},r'\geq r\})$ for each $r\geq\xi_0$.
\end{enumerate}
\par
Now, for any given $r_0\geq\xi_0$, we will prove the following assertion:
\begin{enumerate}[(7)]
\item $(M(v)-f,v-u)+\varphi(v)-\varphi(u)\geq 0$ for all $v\in U(r_0)$.
\end{enumerate}
\par
First,when $r\geq r_0$, $(1)$ implies the following:
\begin{enumerate}[(8)]
\item $(M(u_r)-f,v-u_r)+\varphi(v)-\varphi(u_r)\geq 0$ for all $v\in U(r_0)$.
\end{enumerate}
\par
By applying Lemma \ref{Lemma 4.4} to $u=u_r$ and $G=U(r_0)$, then $(8)$ amounts to the following:
\begin{enumerate}[(9)]
\item $(M(v)-f,v-u_r)+\varphi(v)-\varphi(u_r)\geq 0$ for all $v\in U(r_0)$.
\end{enumerate}
\par
For any fixed $v\in U(r_0)$, let $L(v)=\{w\in U(\xi_0)~|~\varphi(w)+(M(v)-f,w)\leq (M(v)-f,v)+\varphi(v)\}$, then it is obvious that $L(v)$ is $\mathcal{T}_c$--closed and $L^0$--convex and has the countable concatenation property. Further, since, for each $r\geq r_0$, $u_r\in L(v)$, then $y_r$ eventually falls into $L(v)$ for any $r\geq r_0$, so that $u$ also belongs to $L(v)$, namely $u$ satisfies $(7)$.
\par
Again applying Lemma \ref{Lemma 4.4} to $G=U(r_0)$ yields:
\begin{enumerate}[(10)]
\item $(M(u)-f,v-u)+\varphi(v)-\varphi(u)\geq 0$ for all $v\in U(r_0)$.
\end{enumerate}
\par
Finally, since $r_0(\geq\xi_0)$ is arbitrary, one can see the following assertion:
\begin{enumerate}[(11)]
\item $(M(u)-f,v-u)+\varphi(v)-\varphi(u)\geq 0$ for all $v\in E$.
\end{enumerate}
\end{proof}
\par
With Lemma \ref{Lemma 4.4} and the method of Proof of Lemma \ref{Lemma 4.5}, now we can return to Proof of Theorem \ref{Theorem 4.1}, a careful reader even can observe that the method provided here is simpler than that of Proof of its classical prototype--Theorem 3.1 of \cite[Chapter II]{ET99}.

\begin{proof}[Proof of Theorem \ref{Theorem 4.1}]
Let $\mathcal{V}$ be the family of finitely generated submodules of $E$ which contains $v_0, \mathcal{V}$ is directed by the usual inclusion relation, namely $V_1\leq V_2$ iff $V_1\subset V_2$.
On the other hand, since $\mathcal{V}$ does not necessarily have a largest element, we can not use a single random inner product $(\cdot,\cdot)$ but return to the usual pairing between $E^*$ and $E: \langle v^*,u\rangle$ or $v^*(u)$ for any $v^*\in E^*$ and $u\in E$.
\par
For each $V\in \mathcal{V}$, by Lemma \ref{Lemma 4.5} there exists $u_V\in V$ such that
\begin{enumerate}[(1)]
\item $(M(u_V)-f)(v-u_V)+\varphi(v)-\varphi(u_V)\geq 0$ for all $v\in V$.
\end{enumerate}
Applying Lemma \ref{Lemma 4.4} to $u=u_V$ and $G=V$, one can easily see that $(1)$ is equivalent to the following:
\begin{enumerate}[(2)]
\item $(M(v)-f)(v-u_V)+\varphi(v)-\varphi(u_V)\geq 0$ for all $v\in V$.
\end{enumerate}
\par
By noticing $v_0\in V$ and reasoning as in the proof of Lemma \ref{Lemma 4.5}, one can easily see that $\{u_V,V\in \mathcal{V}\}$ must be almost surely bounded, and thus there exists $\xi_0\in L^0_{++}(\mathcal{F})$ such that $\|u_V\|\leq\xi_0$ for all $V\in \mathcal{V}$.
\par
Since $E$ is random reflexive, $U(\xi_0)$ is $L^0$--convexly compact, similarly to Proof of Lemma \ref{Lemma 4.5}, applying Theorem \ref{Theorem 2.16} can produce a net $\{y_{(V,\xi)},(V,\xi)\in \mathcal{V}\times L^0_{++}(\mathcal{F})\}$ convergent in $\mathcal{T}_c$ to some $u\in U(\xi_0)$ such that
\begin{enumerate}[(3)]
\item $y_{(V,\xi)}\in (u+U(\frac{1}{\xi}))\cap H_{cc}(conv_{L^0}\{u_{V'},V'\geq V\})$ for all $(V,\xi)\in \mathcal{V}\times L^0_{++}(\mathcal{F}) ($ where $L^0_{++}(\mathcal{F})$ is directed by the usual order relation $\leq$ on $L^0_{++}(\mathcal{F}) )$.
\end{enumerate}
Now, we will assert that the following inequality always holds for an arbitrarily fixed $V\in \mathcal{V}$:
\begin{enumerate}[(4)]
\item $(M(v)-f)(v-u)+\varphi(v)-\varphi(u)\geq 0$ for all $v\in V$.
\end{enumerate}
\par
First, when $V'($in $\mathcal{V})\geq V$, one can easily see that the following inequality, can, of course, be seen from $(2)$:
\begin{enumerate}[(5)]
\item $(M(v)-f)(v-u_{V'})+\varphi(v)-\varphi(u_{V'})\geq 0$ for all $v\in V$.
\end{enumerate}
Similarly to Proof of Lemma \ref{Lemma 4.5}, for any fixed $v\in V$, let $L(v)=\{w\in E~|~\varphi(w)+(M(v)-f)(w)\leq\varphi(v)+(M(v)-f)(v)\}$, then $L(v)$ is $\mathcal{T}_c$--closed and $L^0$--convex and has the countable concatenation property. Further, since $u_{V'}\in L(v)$ for any $V'\geq V$,each $y_{(V,\xi)}\in L(v)$, then we eventually have that $u\in L(v)$, namely the following inequality is valid:
\begin{enumerate}[(6)]
\item $(M(v)-f)(v-u)+\varphi(v)-\varphi(u)\geq 0$ for all $v\in V$.
\end{enumerate}
\par
Since $V$ is arbitrarily chosen from $\mathcal{V}$, for each $v\in E$, one can always choose a $V$ from $\mathcal{V}$ such that $v\in V$, so that we can have:
\begin{enumerate}[(7)]
\item $(M(v)-f)(v-u)+\varphi(v)-\varphi(u)\geq 0$ for all $v\in E$.
\end{enumerate}
\par
Finally, by Lemma \ref{Lemma 4.4}, $(7)$ amounts to the following:
\begin{enumerate}[(8)]
\item $(M(u)-f)(v-u)+\varphi(v)-\varphi(u)\geq 0$ for all $v\in E$.
\end{enumerate}
This completes the proof of Theorem.
\end{proof}

\begin{remark}\label{Remark 4.6}
If $M$ in Theorem \ref{Theorem 4.1} is moreover strictly monotone: $(M(w)-M(v))(w-v)>0$ on $(w\neq v)$ for any $w,v\in E$, then $u$ is unique.
\end{remark}

\begin{proposition}\label{Proposition 4.7}
If $G$ is a $\mathcal{T}_{\varepsilon,\lambda}$--closed $L^0$--convex subset of $E$ and $M: E\to E^*$ satisfies the same hypotheses as in Theorem \ref{Theorem 4.1}, then for any given $f\in E^*$ there exists $u\in G$ such that $(M(u)-f)(v-u)\geq 0$ for all $v\in G$.
\end{proposition}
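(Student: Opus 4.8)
The plan is to obtain Proposition \ref{Proposition 4.7} as a direct specialization of Theorem \ref{Theorem 4.1}, taking $\varphi$ to be the indicator function $\mathcal{X}_G$ of $G$ introduced in the paragraph following Lemma \ref{Lemma 3.4}. Since $G$ is nonempty, I would first fix a point $g_0\in G$ and translate by $g_0$; the set $G-g_0$ is again $\mathcal{T}_{\varepsilon,\lambda}$--closed and $L^0$--convex and contains $\theta$, while the translated operator $x\mapsto M(g_0+x)$ still satisfies (M-1)--(M-4) (with $v_0$ replaced by $v_0-g_0$), and the sought inequality over $G$ is equivalent to the corresponding one over $G-g_0$. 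Thus there is no loss of generality in assuming $\theta\in G$, so that $\mathcal{X}_G(x)=\tilde{I}_{S(x,G)^c}(+\infty)$ is well defined.

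Next I would verify that $\varphi:=\mathcal{X}_G$ satisfies the hypotheses imposed on $\varphi$ in Theorem \ref{Theorem 4.1}. Being the extension, in the sense of Lemma \ref{Lemma 3.4}, of the constant function $0$ on $G$, and since that constant function is plainly proper, $L^0$--convex and $\mathcal{T}_c$--lower semicontinuous (its level sets are either $G$, which is $\mathcal{T}_c$--closed because $G$ is $\mathcal{T}_{\varepsilon,\lambda}$--closed and $L^0$--convex, or empty), parts (3), (5) and (7) of Lemma \ref{Lemma 3.4} show at once that $\mathcal{X}_G$ is proper, $L^0$--convex and $\mathcal{T}_c$--lower semicontinuous. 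Moreover $dom(\mathcal{X}_G)=G$ is $\mathcal{T}_{\varepsilon,\lambda}$--closed by hypothesis, and the coercivity condition (M-4) read with this particular $\varphi$ is precisely the meaning of ``$M$ satisfies the same hypotheses as in Theorem \ref{Theorem 4.1}''. Hence Theorem \ref{Theorem 4.1} yields a point $u\in E$ with
$$(M(u)-f)(v-u)+\mathcal{X}_G(v)-\mathcal{X}_G(u)\geq 0 \quad \mbox{for all } v\in E.$$

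It then remains to extract the variational inequality over $G$. Putting $v=\theta\in G$ gives $\mathcal{X}_G(u)\leq (M(u)-f)(\theta-u)$, whose right-hand side is an element of $L^0(\mathcal{F})$; since $\mathcal{X}_G(u)=\tilde{I}_{S(u,G)^c}(+\infty)$ takes the value $+\infty$ on $S(u,G)^c$, this forces $P(S(u,G)^c)=0$, i.e.\ $S(u,G)=\Omega$, and by part (3) of Lemma \ref{Lemma 3.3} we conclude $u\in G$. Finally, for every $v\in G$ one has $\mathcal{X}_G(v)=\mathcal{X}_G(u)=0$, so the displayed inequality reduces to $(M(u)-f)(v-u)\geq 0$, as required. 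I expect the only delicate points to be bookkeeping: checking that the reduction to $\theta\in G$ leaves (M-1)--(M-4) intact, and handling the extended-real, $\sigma$-stable values of $\mathcal{X}_G$ under the paper's conventions on $\pm\infty$ when deducing $u\in G$. Conceptually the whole weight of the statement rests on Theorem \ref{Theorem 4.1}.
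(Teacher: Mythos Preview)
Your proposal is correct and follows exactly the paper's own route: reduce to $\theta\in G$ by translation, set $\varphi=\mathcal{X}_G$, and invoke Theorem \ref{Theorem 4.1}. The paper's proof is terser --- it simply asserts that Theorem \ref{Theorem 4.1} yields $u\in G$ with the desired inequality without spelling out the verification of the hypotheses on $\mathcal{X}_G$ or the argument that $u\in G$ --- but the details you supply (via Lemma \ref{Lemma 3.4} and Lemma \ref{Lemma 3.3}) are precisely what is implicit there.
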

\begin{proof}
We can, without loss of generality, assume $\theta\in G$ $($otherwise, let $u_0\in G, G_0=G-u_0$ and $M_0: E\to E^*$ be defined by $M_0(v)=M(v+u_0)$ for all $v\in E$ $)$, define $\varphi: E\to \bar{L}^0(\mathcal{F})$ by $\varphi=\mathcal{X}_G$, then by Theorem \ref{Theorem 4.1} there exists $u\in G$ such that $(M(u)-f)(v-u)\geq 0$ for all $v\in G$.
\end{proof}

\begin{corollary}\label{Corollary 4.8}
Let $M$ satisfy the same hypotheses as in Theorem \ref{Theorem 4.1}, then for any $f\in E^*$ there exists $u\in E$ such that $M(u)=f$.
\end{corollary}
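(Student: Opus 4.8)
The plan is to read Corollary \ref{Corollary 4.8} as the special case of Proposition \ref{Proposition 4.7} (equivalently, of Theorem \ref{Theorem 4.1}) in which the constraint set is the whole module. First I would observe that $G=E$ is itself a $\mathcal{T}_{\varepsilon,\lambda}$--closed $L^0$--convex subset of $E$, so Proposition \ref{Proposition 4.7} applies without modification: for the given $f\in E^{*}$ there exists $u\in E$ with
\[
(M(u)-f)(v-u)\geq 0 \quad \mbox{for all } v\in E.
\]
Equivalently, one may invoke Theorem \ref{Theorem 4.1} directly with $\varphi=\mathcal{X}_{E}$; here $\mathcal{X}_{E}$ is the zero function, since $S(x,E)=\Omega$ for every $x\in E$, so $\varphi$ is proper, $\mathcal{T}_c$--lower semicontinuous and $L^0$--convex with $dom(\varphi)=E$ being $\mathcal{T}_{\varepsilon,\lambda}$--closed, and the displayed variational inequality drops its $\varphi$--terms.

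The key step is then to exploit that $E$ is a \emph{module}, not merely a convex set, so that the one--sided inequality above can be tested in both directions. For arbitrary $w\in E$ both $v=u+w$ and $v=u-w$ lie in $E$; substituting these into the inequality gives $(M(u)-f)(w)\geq 0$ and $(M(u)-f)(-w)\geq 0$ simultaneously. Since $M(u)-f\in E^{*}$ is an $L^0(\mathcal{F},R)$--module homomorphism, the second reads $-(M(u)-f)(w)\geq 0$, and combining the two yields $(M(u)-f)(w)=0$ for every $w\in E$. A functional in $E^{*}$ that vanishes on all of $E$ has $L^0$--norm $0$, hence equals $\theta$ by (RNM-3) applied to the $RN$ module $(E^{*},\|\cdot\|)$; therefore $M(u)=f$, which is exactly the assertion.

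I do not expect a serious obstacle here, as the statement is essentially a linear--space unwrapping of the variational inequality already furnished by Proposition \ref{Proposition 4.7}, and in particular requires none of the $L^0$--convex compactness machinery beyond what that proposition already consumes. The only point deserving care is the bookkeeping around hypothesis (M-4): one must read ``$M$ satisfies the same hypotheses as in Theorem \ref{Theorem 4.1}'' with $\varphi=\mathcal{X}_{E}=0$, so that (M-4) specializes to the genuine coercivity requirement that $M(v_n)(v_n-v_0)/\|v_n\|$ converge a.s. to $+\infty$ on $A$ whenever $\|v_n\|$ does, relative to some fixed $v_0\in E$. This is precisely the coercivity of $M$ that drives the existence argument behind Proposition \ref{Proposition 4.7}, so no additional verification is needed.
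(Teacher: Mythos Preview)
Your proof is correct and follows exactly the paper's approach: the paper's proof consists of the single line ``Taking $G=E$ in Proposition \ref{Proposition 4.7} will complete the proof,'' and you have simply spelled out the implicit step that the variational inequality on all of $E$, tested at $v=u\pm w$, forces $M(u)-f$ to vanish identically.
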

\begin{proof}
Taking $G=E$ in Proposition \ref{Proposition 4.7} will complete the proof.
\end{proof}

\begin{proposition}\label{Proposition 4.9}
Let $E$ be a $\mathcal{T}_{\varepsilon,\lambda}$--complete random inner product module and $M: E\to E$ a $\mathcal{T}_{\varepsilon,\lambda}$--continuous module homomorphism such that, for some $\alpha\in L^0_{++}(\mathcal{F}), (M(v),v)\geq\alpha\|v\|^2$ for all $v\in E$, then for any given $f\in E$ there exists a unique $u\in E$ such that $M(u)=f$.
\end{proposition}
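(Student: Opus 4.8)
The plan is to reduce the statement to Corollary \ref{Corollary 4.8}, the random counterpart of the surjectivity of a monotone coercive operator, by using the Riesz representation theorem for complete random inner product modules to identify $E$ with its random conjugate space $E^*$. Since $(E,(\cdot,\cdot))$ is a $\mathcal{T}_{\varepsilon,\lambda}$--complete $RIP$ module over $R$, the $L^0$--inner product induces the $L^0$--norm $\|x\|=(x,x)^{1/2}$, and by the Riesz representation theorem \cite[Theorem 4.3]{GY96} the map $R:E\to E^*$, $R(y)=(\cdot,y)$, is a surjective $L^0$--norm--preserving module isomorphism; consequently $E^{**}\cong E^*\cong E$ and one checks that the canonical embedding $J:E\to E^{**}$ is onto, so $E$ is random reflexive (cf. \cite{Guo97,GL05}) and Theorem \ref{Theorem 4.1} is applicable to it.

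With this identification I would recast the given homomorphism $M:E\to E$ as a map $\tilde{M}:E\to E^*$ defined by $\tilde{M}(v)(w)=(M(v),w)$ for all $v,w\in E$, and recast the datum $f\in E$ as $\tilde{f}:=R(f)=(\cdot,f)\in E^*$. Since the $L^0$--inner product is nondegenerate, $M(u)=f$ holds in $E$ if and only if $\tilde{M}(u)=\tilde{f}$ holds in $E^*$, so it suffices to produce a unique solution of the latter equation. Taking $\varphi:=\mathcal{X}_E$ (which is identically $0$ on $E$) and $G=E$, the claim becomes exactly the conclusion of Corollary \ref{Corollary 4.8}, once $\tilde{M}$ is shown to satisfy the hypotheses (M-1)--(M-4) of Theorem \ref{Theorem 4.1}.

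The verification is routine and exploits the $L^0$--linearity of $M$. For (M-1), a module homomorphism satisfies $M(\tilde{I}_Ax+\tilde{I}_{A^c}y)=\tilde{I}_AM(x)+\tilde{I}_{A^c}M(y)$ because $\tilde{I}_A\in L^0(\mathcal{F})$, and $R$ is likewise stable, whence $\tilde{M}$ is stable. For (M-2), if $x_n\to x_0$ in $\mathcal{T}_{\varepsilon,\lambda}$ then $M(x_n)\to M(x_0)$ in $\mathcal{T}_{\varepsilon,\lambda}$ by continuity of $M$, and the Cauchy--Schwarz inequality $|(M(x_n)-M(x_0),w)|\leq\|M(x_n)-M(x_0)\|\,\|w\|$ gives $\tilde{M}(x_n)(w)\to\tilde{M}(x_0)(w)$ in probability for each fixed $w$, a fortiori on finitely generated submodules. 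For (M-3), linearity and coercivity yield $(\tilde{M}(u)-\tilde{M}(v))(u-v)=(M(u-v),u-v)\geq\alpha\|u-v\|^2\geq 0$, so $\tilde{M}$ is monotone, indeed strictly monotone since $\alpha\|u-v\|^2>0$ on $(u\neq v)$. For (M-4), take $v_0=\theta$; then $\frac{\tilde{M}(v_n)(v_n-\theta)+\varphi(v_n)}{\|v_n\|}=\frac{(M(v_n),v_n)}{\|v_n\|}\geq\alpha\|v_n\|$, which converges a.s. to $+\infty$ on any $A$ on which $\|v_n\|\to+\infty$, because $\alpha\in L^0_{++}(\mathcal{F})$. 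Corollary \ref{Corollary 4.8} then furnishes $u\in E$ with $\tilde{M}(u)=\tilde{f}$, i.e. $M(u)=f$; uniqueness follows from the strict monotonicity just noted (cf. Remark \ref{Remark 4.6}), or directly, since $M(u_1)=M(u_2)$ forces $\alpha\|u_1-u_2\|^2\leq(M(u_1-u_2),u_1-u_2)=0$ and hence $u_1=u_2$.

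The one genuinely structural point, rather than a calculation, is establishing that a $\mathcal{T}_{\varepsilon,\lambda}$--complete $RIP$ module is random reflexive so that Theorem \ref{Theorem 4.1} may be invoked; this is precisely what the Riesz representation theorem delivers, and it is the crux of the reduction. Everything else is comparatively soft: the linearity of $M$ makes monotonicity automatic from coercivity (and even upgrades it to strict monotonicity), and the coercivity inequality $(M(v),v)\geq\alpha\|v\|^2$ is exactly tuned to supply the growth condition (M-4) with the trivial choices $v_0=\theta$ and $\varphi\equiv 0$. I therefore expect no serious obstacle beyond correctly transporting the hypotheses through the Riesz isomorphism $E\cong E^*$ and keeping to the real--scalar setting of this section, which avoids the conjugation subtleties that would arise over $C$.
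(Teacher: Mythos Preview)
Your proposal is correct and follows essentially the same route as the paper: identify $E$ with $E^*$ via the Riesz representation theorem, verify the hypotheses of Theorem \ref{Theorem 4.1}, and apply Corollary \ref{Corollary 4.8}. The paper's proof is in fact just the two-line sketch ``$E$ and $E^*$ are identified, it is easy to check that $M$ satisfies all the conditions as in Theorem \ref{Theorem 4.1}, then by Corollary \ref{Corollary 4.8}\ldots'', so your expanded verification of (M-1)--(M-4), your explicit remark that random reflexivity of a complete $RIP$ module comes from Riesz, and your separate argument for uniqueness (which Corollary \ref{Corollary 4.8} alone does not give) are all welcome elaborations rather than deviations.
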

\begin{proof}
$E$ and $E^*$ are identified, it is easy to check that $M$ satisfies all the conditions as in Theorem \ref{Theorem 4.1}, then by Corollary \ref{Corollary 4.8} there exists a unique $u\in E$ such that $M(u)=f$.
\end{proof}

\section*{Acknowledgement(s)}
The authors are supported by the NNSF of China No.11571369 and No.11701531. The authors would also like to thank Professors Hongkun Xu and George Yuan for some valuable suggestions which considerably improve the readability of this paper.

%\section{References}

\bibliography{amsplain}

\end{document}